\newtheorem{thm}{Theorem}[section]
\newtheorem{cor}[thm]{Corollary}
\newtheorem{lem}[thm]{Lemma}
\newtheorem{prop}[thm]{Proposition}
\newtheorem{defn}[thm]{Definition}
\newtheorem{rem}[thm]{\bf{Remark}}
\numberwithin{equation}{section}
\begin{document}


\pagestyle{myheadings}

\bigskip
\bigskip

\title{Morse Index of $Y$-singular minimal surfaces}

\author{Elham Matinpour}
\address{Department of Mathematics, Johns Hopkins University, 3400 N. Charles Street, Baltimore, MD 21218}
\email{ematinp1@jhu.edu}

\maketitle

\begin{abstract}
In this paper, we compute the Morse index of rotationally symmetric minimal $Y$-singular surfaces under assumption that the $Y$-singularities form a single circle. This computation is carried out by utilizing information from two simpler problems: the first deals with the fixed boundary problem on singularities, and the second focuses on the Dirichlet-to-Neumann map associated with the stability operator. Notably, our findings reveal that the index of the $Y$-catenoid is one.
\end{abstract}

\section{Introduction}
Classically, minimal surfaces in $\mathbb{R}^3$ are defined as smooth immersions with vanishing mean curvature. Nonetheless, in nature, non-smooth minimal surfaces with specific types of singular points can occur, and these instances have been experimentally observed and documented by Plateau, \cite{plateau1873statique}.
\medskip

Two types of singular points, known as $Y$ and $T$ points, are observed in soap films.  
A point $p$ on a soap film is called a \textbf{$Y$-point} (resp. \textbf{$T$-point}) if the tangent space to the soap film at that point forms a $Y$ (resp. $T$); where a $Y$ is defined as a union of three normal vectors making the same angle equal to $120$-degrees with vertex at the point $p$ (resp. where a $T$ is the cone over the edges of a reference regular tetrahedron centered at the point $p$).
\medskip

Bernstein and Maggi in \cite{bernstein2021symmetry} call the surfaces described in experiments \textbf{minimal Plateau surfaces} and they ask: 
\medskip

\textit{To what extent may the classical theory of minimal surfaces be generalized to minimal Plateau surfaces and what new conclusions may be drawn.} 
\medskip

Plateau surfaces are minimal surfaces which may admit singularities that are either $Y$ or $T$-points. For a precise definition, refer to [1.3 \cite{bernstein2021symmetry}].
\medskip

In this work, we explore an expansion of the classical smooth minimal surfaces to encompass singular minimal surfaces comprising $Y$-singularities. Subsequently, we present an illustrative instance of a group of rotationally symmetric $Y$-singular minimal surfaces and compute their Morse indices. It is important to note that the singularities in these instances adhere to two certain favorable conditions. Firstly, the singularities are exclusively $Y$-points, also referred to as $Y$-singularities. Secondly, the collection of $Y$-points forms a single circle positioned on a plane.

Despite an extensive existing literature on the index characterizations for smooth minimal surfaces, much remains unknown for non-smooth minimal surfaces.
In this study, we introduce a one-parameter family, denoted as ${Y_{\alpha}}$, where $\alpha \in (0,\pi]$, of $Y$-surfaces in $\mathbb{R}^3$, referred to as $Y$-noids. The index of each example within this family is computed, revealing that each instance has an index of two. Additionally, as the parameter $\alpha$ approaches zero, the sequence of $Y$-noids converges to a $Y$-surface, specifically a vertical $Y$-catenoid union the horizontal plane. Given the natural occurrence of $Y$-catenoids, such as in soap films, we are particularly intrigued by their geometric characterizations. Our findings indicate that the index of a $Y$-catenoid is equal to one.
\medskip

In \cite{bernstein2021symmetry}, Bernstein and Maggi refer to Plateau surfaces that exclusively feature Y-singularities (where the set of T-points is empty) as Y-surfaces. While the Y-noids exemplify Plateau Y-surfaces, we opt to introduce them as triple junction surfaces, emphasizing the geometric structure of the set of Y-points.
\medskip

In the work by Gaoming Wang \cite{wang2022curvature}, the concept of multiple junction minimal surfaces is introduced. These surfaces comprise a collection of immersed minimal surfaces converging at a shared junction, maintaining a consistent and equal angle along the junction. In the same study, Wang expands the Bernstein Theorem to encompass stable minimal multiple junction surfaces:
\medskip

\textit{Suppose that $\Sigma $ is a minimal multiple junction surface in $\mathbb{R}^3$ that is complete, stable and has quadratic area growth. Then each surface component is flat.} \medskip

A minimal surface is stable if its Morse index is zero. The Morse index of the surface $\Sigma$ corresponds to the index of the second variation of the area functional. Intuitively, this index signifies the number of distinct permissible infinitesimal deformations that lead to a second-order decrease in volume.

In the definition of a multiple junction surface [Definition 2.2,\cite{wang2022curvature}], when the number of surface components is three, it is termed a triple junction surface. Notably, our family of $Y$-surfaces serves as instances of triple junction surfaces. In this exposition, we prefer adopting the definition of triple junction surfaces over that of Plateau $Y$-surfaces. This preference arises from the fact that the former provides us with an explicit parametrization regarding $Y$-singularities, making it more convenient for verifying the second variation of the area functional.
\medskip

This paper is organized as follow. Section \ref{sec: 2} provides background information and introduces notation. Subsection \ref{subsec: basic func. spaces} discusses the space of functions on a Y-surface, followed by the definition of a triple junction surface in \ref{subsec: triple junction surface},  and the introduction of a family of Y-noids in \ref{subsec: family of y-noids}. Section \ref{sec: index evaluation on catenoidal} evaluates the index form of a $Y$-surface. Finally in section \ref{section 4}, we compute the index for various examples within the family of Y-noids.

\subsection*{Acknowledgment}
I express my gratitude to my advisor, Jacob Bernstein, for suggesting the research problem and offering many valuable comments, advice, and encouragement throughout the course of this work.

\section{background and notation} \label{sec: 2}
\subsection{Basic Function Spaces} \label{subsec: basic func. spaces}
Let $M_1,M_2$ and $M_3$ be three two dimensional manifolds with boundary. Let $\Gamma$ be a one dimensional manifold (not necessarily closed or connected) and suppose there are $\Gamma_i \subset \partial M_i$ submanifolds and diffeomorphisms $\phi_i : \Gamma_i \rightarrow \Gamma$. We denote by 
\begin{equation}
    \nonumber
    \partial' M_i = \partial M_i \backslash \Gamma_i
\end{equation}
 the parts of the boundary not identified.\\
 Let $\hat{M} = M_1 \coprod M_2 \coprod M_3$ be the disjoint union of the three spaces (this is a surface with boundary) and let
 \begin{equation}
     \nonumber\hat{\Gamma} = \Gamma_1 \coprod \Gamma_2 \coprod \Gamma_3 \subset \hat{M}
 \end{equation}
 be the disjoint union of the identified parts of the boundary. 
 We let $\phi : \hat{\Gamma} \rightarrow \Gamma$ be the covering map given by $x\in \Gamma_i \mapsto \phi_i (x) \in \Gamma$. Let 
 \begin{equation}
     \nonumber
     \partial' \hat{M} = \partial \hat{M} \backslash \hat{\Gamma}
 \end{equation}
 be the part of the boundary not identified. \\
 On $\hat{M}$ we consider the following equivalence relation 
 \begin{equation}
     \nonumber
     x \sim_{\hat{\Gamma}} y\ \Longleftrightarrow \ x,y \in \hat{\Gamma} \ \text{and}\ \phi (x) = \phi (y)
 \end{equation}
 Let 
 \begin{equation}
     \nonumber
     M = \hat{M} \slash \sim_{\hat{\Gamma}}
 \end{equation}
 and endow this with the quotient topology. Denote by $\Pi : \hat{M} \rightarrow M$ the projection map $x \mapsto [x]$. \\
 It is clear that for $1\leq i \leq j\leq 3$
 \begin{equation}
     \nonumber
     \Pi (\Gamma_i ) = \Pi (\Gamma_j )
 \end{equation}
 and 
 \begin{equation}
     \nonumber
     \Pi \vert_{\hat{M} \backslash \hat{\Gamma}} 
 \end{equation}
 is a homeomorphism. We may identify $ \Pi (\hat{\Gamma}) \subset M$ with $\Gamma$ so $\Pi \vert_{i} =\phi_i$. It is clear that $M\ \backslash \ \Gamma$ is a two dimensional manifold with boundary. We also have
 \begin{equation}
     \nonumber
     \partial M =\Pi (\partial' \hat{M} ).
 \end{equation}
 We denote by 
 \begin{equation}
     \nonumber
     C^k (\hat{M} )
 \end{equation}
 the space of $C^k$ functions on $\hat{M}$. \\
 If $F \in C^k (\hat{M} )$ then $F \vert_{M_i} \in C^k (M_i )$, i. e., the restriction has the same regularity as a function on $M_i$, we will often shorten this to $F\vert_i =F_i$. If $F_i \in C^k (M_i )$ for $i=1,2,3$, then we write
 \begin{equation}
     \nonumber
     F_1 \coprod F_2 \coprod F_3 \in C^k (\hat{M} ),
 \end{equation}
 for the (unique) map whose restriction to each $M_i$ is $F_i$. For $F \in C^k (\hat{M} )$ we define 
 \begin{equation}
     \nonumber
     F\vert^i_{\Gamma} = F \circ  \phi_i^{-1} \in C^k (\Gamma ).
 \end{equation}
 We let 
 \begin{equation}
     \nonumber
     C_{\neq}^k (M) = \Big\{ f \in C^k (M\ \backslash \ \Gamma )\ :\ \exists F \in C^k (\hat{M}),\ f\circ \Pi = F\vert_{\hat{M}\backslash \hat{\Gamma}} \Big\}.
 \end{equation}
 We note that $F$ is uniquely determined by $f$. Hence, we may define
 \begin{equation}
     \nonumber
     f\vert_i = F\vert_i \in C^k (M_i )\ \text{and}\ f\vert_{\Gamma}^i = F\vert_{\Gamma}^i \in C^k (\Gamma ).
 \end{equation}
 Now denote by
 \begin{equation}
     \nonumber
     C_{=}^k (\hat{M} ) = \Big\{ F \in C^k (\hat{M} )\ :\ F\vert_{\Gamma}^1 = F\vert_{\Gamma}^2 = F\vert_{\Gamma}^3 \Big\} .
 \end{equation}
 This is the subspace of $C^k (\hat{M} )$ that consists of the functions which induce the same function on $\Gamma$. As $M$ is a topological space, the set $C^0 (M)$ makes sense and one can check that $f \in C^0 (M)$ if and only if $f \circ \Pi \in C_{=}^0 (\hat{M} )$. Associated to this space is 
 \begin{equation}
     \nonumber
     C_{=}^k (M) = \Big\{ f \in C^k (M)\ :\ f\circ \Pi \in C_{=}^k (\hat{M}) \Big\} .
 \end{equation}
 We observe that
 \begin{equation}
     \nonumber
     C_{=}^k (M) \subseteq \Big\{ f \in C_{\neq}^k (M)\ :\ f\vert_{\Gamma}^i = f\vert_{\Gamma}^j,\ i\neq j \Big\} \subset C_{\neq}^k (M).
 \end{equation}
 Likewise, let
 \begin{equation}
     \nonumber
     C_{v}^k (\hat{M}) = \Big\{ F \in C^k (\hat{M})\ :\ F\vert_{\Gamma_i} = 0,\  i=1,2,3 \Big\} ,
 \end{equation}
 which is a subspace of $C_{=}^k (\hat{M} )$ consisting of functions that are zero on $\hat{\Gamma}$. We also let
 \begin{equation}
     \nonumber
     \begin{split}
     C_{v}^k (M) &= \Big\{ f \in C_{=}^k (M)\ :\ f\circ \Pi \in C_{v}^k (\hat{M} ) \Big\} \\
     &= \Big\{  f\in C_{\neq}^k (M)\ :\ f\vert_{\Gamma}^i =0,\  i=1,2,3 \Big\} \subset C_{=}^k (M) ,
     \end{split}
 \end{equation}
 which is the set of functions which vanish on the common boundary. \\
 We also introduce the following subspace
 \begin{equation}
     \nonumber
     C_{\Delta}^k (\hat{M} ) = \{ F\in C^k (\hat{M} )\ :\ F\vert_{\Gamma}^1 + F\vert_{\Gamma}^2 + F\vert_{\Gamma}^3 =0 \}.
 \end{equation}
 Note that if $F\in C_{\Delta}^k (\hat{M} ) \cap C_{=}^k (\hat{M} )$, then $F\vert_{\Gamma}^i =0$. We define
 \begin{equation}
     \nonumber
     C_{\Delta}^k (M) = \{ f\in C_{\neq }^k (M)\ :\ \exists F \in C_{\Delta}^k (\hat{M}),\ f\circ \Pi = F\vert_{\hat{M}\backslash \hat{\Gamma}} \}.
 \end{equation}
 Likewise
 \begin{equation}
     \nonumber
     C_{\Delta}^k (M) = \{ f\in C_{\neq }^k (M)\ :\ f\vert_{\Gamma}^1 + f\vert_{\Gamma}^2 + f\vert_{\Gamma}^3 =0 \}.
 \end{equation}
 We can extend all these spaces to maps
 \begin{equation}
     \nonumber
     C_{\neq}^k (M;\mathbb{R}^n ),\  C_{=}^k (M;\mathbb{R}^n ),\ \text{and}\ C_{\Delta}^k (M;\mathbb{R}^n ),
 \end{equation}
 by asking that each component lie in the appropriate space. \\
 In the reverse direction for $g\in C^k(\Gamma )$ and $\bold{c} = (c_1,c_2,c_3) \in \mathbb{R}^3$, let 
 \begin{equation}
     \nonumber
     Ext^k(g,\bold{c}) = \{ f\in C_{\neq}^k (M)\ :\ f\vert_{\Gamma}^i = c_ig,\ i=1,2,3\} \subset C_{\neq}^k (M), 
 \end{equation}
 be the set of functions extending (weighted) $g$ to the respective faces. We have the special case
 \begin{equation}
     \nonumber
     Ext_{=}^k (g) = Ext \big ( g,\bold{(1,1,1)} \big ) \subset C_{=}^k (M).
 \end{equation}
 Likewise, 
 \begin{equation}
     \nonumber
     Ext_{\Delta}^k (g) =\{ \underset{c_1+c_2+c_3=0}{\bigcup} Ext^k (g,\bold{c}) \} \subset C_{\Delta}^k (M).
 \end{equation}
We observe
 \begin{equation}
     \nonumber
     C_{v}^k (M) = Ext^k \big (g,(0,0,0)\big ) = \{ f \in C_{=}^k (M)\ :\ f\vert_{\Gamma}^i =0\}.
 \end{equation}
 Finally, we write
 \begin{equation}
     \nonumber
     C_{\neq ,0}^k (M), C_{=,0}^k (M), C_{\Delta ,0}^k (M), C_{v ,0}^k (M),
 \end{equation}
 when we want to restrict to functions that vanish on the boundary of $M,\ \partial M$.
 \medskip

\subsection{Triple Junction Minimal Surfaces.} \label{subsec: triple junction surface} 
We may use the above to define notions of (parameterized) triple junction minimal surfaces in $\mathbb{R}^3$.\\
To that end suppose that
\begin{equation}
    \nonumber
    \bold{F} \in C_{=}^{\infty} (M;\mathbb{R}^3 )
\end{equation}
has the property that $\bold{F}\vert_i : M_i \rightarrow \mathbb{R}^3$ is an immersion. We may think of this as a sort of immersion of a triple junction.\\
Suppose now that each of the $M_i$ is oriented in a way so that the induced orientations on $\Gamma_i \subset \partial M_i$ are compatible (i.e. induce the same orientation on $\Gamma$ via the $\phi_i$). \\
We may then pick unit normals $\bold{n}_i$ to the $\bold{F}\vert_i$ that are compatible with these choices of orientation. There is a $\bold{n} \in C_{\neq}^{\infty} (M;\mathbb{R}^3 )$ so $\bold{n} \vert_i = \bold{n}_i$ and in general, $\bold{n} \notin C_{=}^{\infty} (M;\mathbb{R}^3 )$. \\
We say $\bold{F}$ is a triple-junction minimal surface if each $\bold{F}\vert_i$ is a minimal immersion and
\begin{equation}
    \nonumber
    \bold{n} \in C_{\Delta}^{\infty} (M;\mathbb{R}^3 )
\end{equation}
is a compatible choice of unit normal. Note this last condition implies that the $\bold{F} (M_i)$ meet at $120^{\circ}$ along $\bold{F} (\Gamma)$. \\
Now suppose $\bold{F}_t \in C_{=}^{\infty} (M;\mathbb{R}^3 )$ is a variation of $\bold{F} =\bold{F}_0$. We obtain a variation vector field 
\begin{equation}
    \nonumber
    \bold{V} = \frac{d}{dt} \vert_{t=0} \bold{F}_t \in C_{=}^{\infty} (M;\mathbb{R}^3 ),
\end{equation}
the normal part of the variation is the function 
\begin{equation}
    \nonumber
    \bold{V.n} \in C_{\Delta}^{\infty} (M; \mathbb{R}^3 ).
\end{equation}
The second variation formula (as worked out by Wang \cite{wang2022curvature}) involves the quadratic form
\begin{equation}
    \nonumber
    Q : C_{\Delta}^{\infty} (M) \rightarrow \mathbb{R} 
\end{equation}
given by 
\begin{equation} \label{eqn: 2nd variation fromula}
   Q(\phi )(= Q(\phi , \phi)) = \underset{i=1}{\overset{3}{\sum}} \Big ( \int_{M_i} \vert \nabla_{M_i} \phi \vert_i \vert^2 - \vert A_{\bold{F}(M_i)} \vert^2 (\phi \vert_i )^2 - \int_{\Gamma   }    \bold{H}_{\bold{F}(\Gamma)} . \nu_i (\phi \vert_{\Gamma}^i )^2 \Big ),
\end{equation}
where $\nu_i$ is the outward normal to $\bold{F} (M_i)$ along $\bold{F} (\Gamma )$, $\vert A_{\bold{F}(M_i)} \vert^2$ is the norm square of the second fundamental form of $M_i$, and $\bold{H}_{\bold{F}(\Gamma)}$ is the mean curvature vector of the curve $\Gamma$. \\
Indeed, 
\begin{equation}
    \nonumber
    \frac{d^2}{dt^2} \vert_{t=0} Area (\bold{F}_t (M)) = Q(\bold{V\ .\ n}).
\end{equation}
The quadratic form $Q$, also known as the index form, is a symmetric bilinear form so that $Q(\phi ,\psi )=Q(\psi ,\phi )$.
\medskip

\label{def: Morse Index} We recall that the Morse index of $M \subset \mathbb{R}^3$ is the maximal dimension of a subspace of $ W_{\Delta,c}^{1,2} (M)$-functions which satisfies the compatible condition along $\Gamma$ and on which the second variation of area functional, (\ref{eqn: 2nd variation fromula}), is negative, where $W^{1,2}(M)$ is the corresponding Sobolev space. 

\begin{rem} 
\rm  To achieve a maximum dimension within the domain of compactly supported functions, subject to certain conditions, we extend the domain from the $W_{\Delta,c}^{1,2}(M)$-space to the $W_{\Delta}^{1,2}(M) \cap L^2 (M)$-space, as outlined in [Proposition 2., \cite{fischer1985complete}]. Subsequently, we employ a suitable cut-off technique to restrict the functions to compactly supported ones. There is also potential to explore the extension of this concept to larger spaces, such as certain  weighted $L_*^2 (M)$-spaces. As shown by Chodosh and Maximo\cite{chodosh2018topology}, an optimal space facilitates the determination of the maximum dimension for admissible subspaces.
\end{rem}

\subsection{Family of Y-noids} \label{subsec: family of y-noids}

\begin{defn} \label{def: Y-noid}
       \rm  Assuming $\Gamma$ is a circle centered at the origin in the horizontal plane $\{ x_3 = 0\}$, we consider three catenary surfaces $\Sigma_1,\ \Sigma_2$, and $\Sigma_3$. These surfaces are foliated by circles on planes parallel to $\{ x_3 =0 \}$. Each $\Sigma_i$ is regarded as a stationary surface and intersects the plane $\{ x_3=0\}$, making a constant contact angle $\alpha_i \in (0,\pi]$ with the  plane along $\Gamma$. We assume that $\Gamma$ serves a boundary component of $\Sigma_i$, and $\alpha_i $ represents the constant contact angle formed by the conormal vector $\eta_i$ to  $\Sigma_i $ along $\Gamma$ with the  horizontal plane. Ensuring $\eta_1 + \eta_2 + \eta_3 =0$, we have
       $Y_{\alpha} = (\Sigma_1, \Sigma_2 , \Sigma_3 ; \Gamma )$ as a minimal triple junction surface. Using this notation, $\alpha$ is defined as $\alpha_1$, we refer to this minimal surface as a \textbf{$Y$-noid}. 
    The assumption of $Y_{\alpha}$ being a minimal triple junction surface implies that the faces, $\Sigma_i $s, interface with $\Gamma $ through the $Y$-singularities.
\end{defn}
\begin{figure} \label{fig: alpha}  
        \includegraphics[width=14cm, height=5cm]{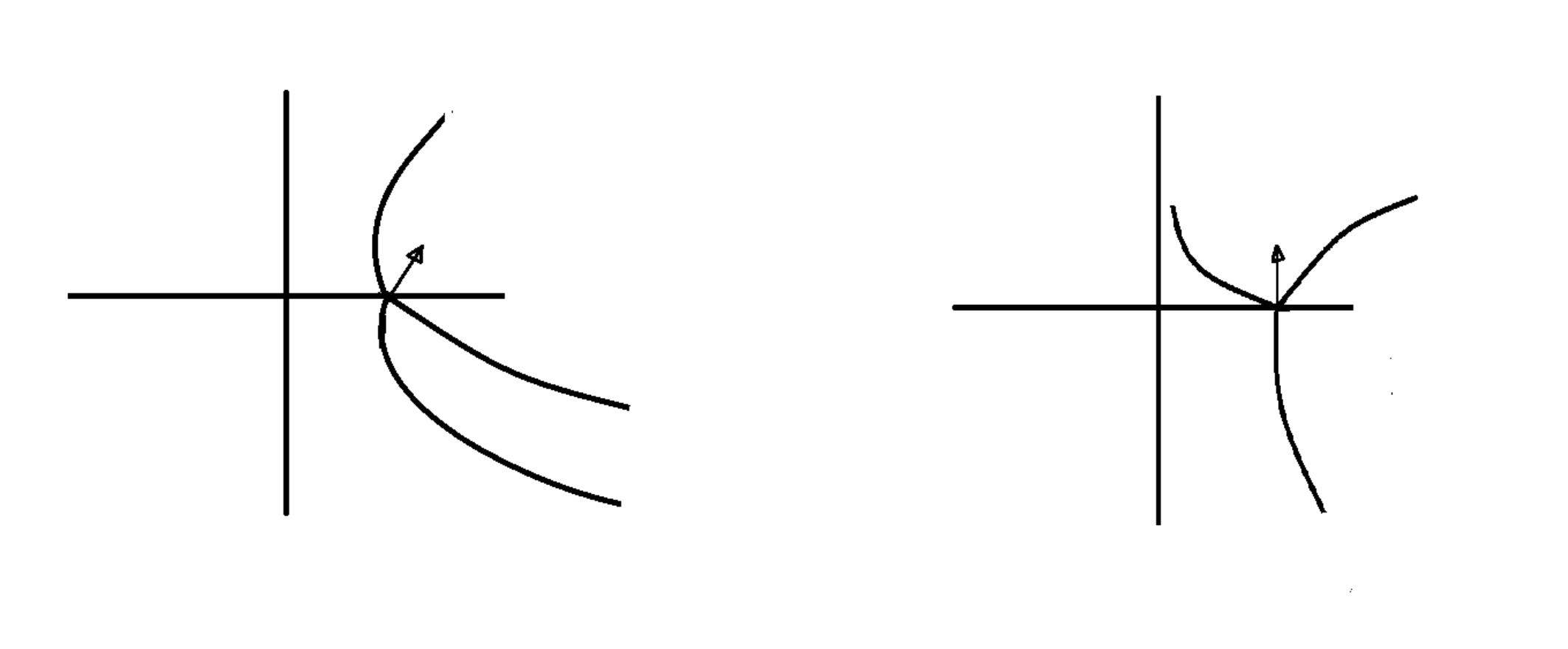}   
    \caption{}
\end{figure}

\begin{rem} \label{rem: Y-alphas}
          \rm  By a result of Riemann \cite{riemann1867flache} and Enneper \cite{enneper1869cyklischen}, as well as additional verification in \cite{nitsche1989lectures},  the only surfaces that can be foliated by a circle are a plane, a catenoid, or a Riemann example. Given that the faces maintain a constant contact angle with the horizontal plane,  the main result from \cite{pyo2012remarks} implies that $\Sigma_i$s are part of catenoid, or part of the horizontal plane.
          If we rotate the $Y$, constructed from the unit conormal vectors along the circle $\Gamma$,  by a constant angle (continuously varying the parameter $\alpha$) a new $Y$-noid is generated, denoted as $Y_{\beta}$. This holds true because the Gauss map of the catenoid covers the sphere excluding two antipodal poles. Consequently, by rotating the conormal vector $\eta$  by an angle $\theta$, we obtain a new vector $\eta'$ that serves as the conormal vector to another catenoidal surface $\Sigma'$ along $\Gamma$.  Hence, for any  $\alpha \in (0,\pi]$, there exists a corresponding $Y$-noid. This establishes a $1$-parameter family of $Y$-noids, denoted as \textbf{$\{ Y_{\alpha}\}_{\alpha \in (0,\pi]} $}. Figure 1 depicts $Y_{\alpha}$ for $0<\alpha <\frac{\pi}{2}$ on the left, and $Y_{\frac{\pi}{2}}$ on the right.
          
          \begin{rem}
        \rm  We confirm that the $\{ Y_{\alpha}\}_{\alpha \in (0,\frac{\pi}{3}
        ]} $-subfamily encompasses all members of the $Y$-noid family, accounting for reflections through the horizontal plane.
\end{rem}

The catenary surface $\Sigma$ corresponding to the angle $\alpha$ can be expressed as follows: 
\item \textbf{1. } If $\alpha \rightarrow 0$  then $\Sigma$ converges to a disk enclosed by $\Gamma$, denoted as $D_{\Gamma}$, union the horizontal plane, that is $D_{\Gamma} \cup  \{ x_3 = 0  \} $.
\item \textbf{2. } If $\alpha = \pi$  then $\Sigma$ is the horizontal plane minus the disk $D_{\Gamma}$, that is $\Sigma =   \{x_3 = 0 \}  \backslash D_{\Gamma}  $. One may check that the surfaces $Y_{\pi}$ and $Y_{\frac{\pi}{3}}$ are exhibiting the same geometry. 
\item \textbf{3. } If $\alpha \in (0,\pi)$ then $\Sigma$ is a catenoidal surface foliated by $\Gamma $, 
\end{rem}
where a \textbf{catenoidal surface} is a catenary surface foliated by a circle which is a subset of a catenoid. 

\begin{rem}
        \rm  Each $Y_{\alpha}$-surface, for $\alpha \in (0,\frac{\pi}{3}
        ]$, comprises three non-planar catenoidal faces, while $Y_{\frac{\pi}{3}}$ consists of a flat catenary face connected to two non-planar catenoidal faces.
\end{rem}

As $\alpha$ approaches $0$, let $Y_{0}= (\Sigma_1, \Sigma_2 , \Sigma_3 ; \Gamma )$ represents the limit surface. Here, $\Sigma_1$ forms the disk $D_{\Gamma }$ union horizontal plane. Consequently, the two remaining faces, $\Sigma_2$ and $\Sigma_3$ must be symmetric catenoidal surfaces, both making a consistent $120$-degrees contact angle with the horizontal plane along $\Gamma$. In this scenario, $Y_{0}$ corresponds to the $Y$-catenoid union the horizontal plane. However, our primary focus is on determining the Morse index of the $Y$-catenoid. Therefore, in this manuscript, we replace $Y_0$ with the modified $Y$-surface corresponding to the $Y$-catenoid.

\begin{defn} \label{def: y-catenoid}
         \rm   The \textbf{$Y$-catenoid} is the minimal triple junction surface denoted as $M = (\Sigma_1, \Sigma_2 , \Sigma_3 ; \Gamma )$, where $\Gamma \subset \{ x_3 = 0\}$ represents a single circle centered at the origin, and $\Sigma_1 = D_{\Gamma}$ is the disk enclosed by $\Gamma$, while $\Sigma_2$ and $\Sigma_3$ are two symmetric catenoidal surfaces, both forming a consistent contact angle of $120$-degrees with the horizontal plane along $\Gamma$. In simpler terms, the $Y$-catenoid emerges as the limit surface, denoted as $Y_0$, with the exclusion of the horizontal plane, resulting in $M=\{ \lim_{\alpha \rightarrow 0} Y_{\alpha}\}  \setminus \{ x_3 =0 \} $. Refer to Figure 2.a., \ref{fig: Y-cats}.
\end{defn}

\begin{defn}  \label{def: pseudo y-catenoid}
           \rm  The \textbf{pseudo $Y$-catenoid} is a minimal triple junction surface denoted as $Y_{\frac{\pi}{3}} =(\Sigma_1, \Sigma_2 , \Sigma_3 ; \Gamma )$ belonging to the family $\{ Y_{\alpha} \}$. 
            In this configuration, $\Gamma \subset \{ x_3 = 0\}$ represents a single circle centered at the origin. Here, $\Sigma_2 =D_{\Gamma}^c  =  \{ x_3 = 0\} \  \backslash  \   \{ D_{\Gamma}\}$ is the horizontal plane excluding the disk enclosed by $\Gamma$, while $\Sigma_1$ and $\Sigma_3$ are two symmetric catenoidal surfaces, both forming a consistent contact angle of $60$-degrees with the horizontal plane along $\Gamma$. Refer to Figure 2.b., \ref{fig: Y-cats}. 
\end{defn} 

\begin{figure} \label{fig: Y-cats}
        \includegraphics[width=4cm, height=6cm]{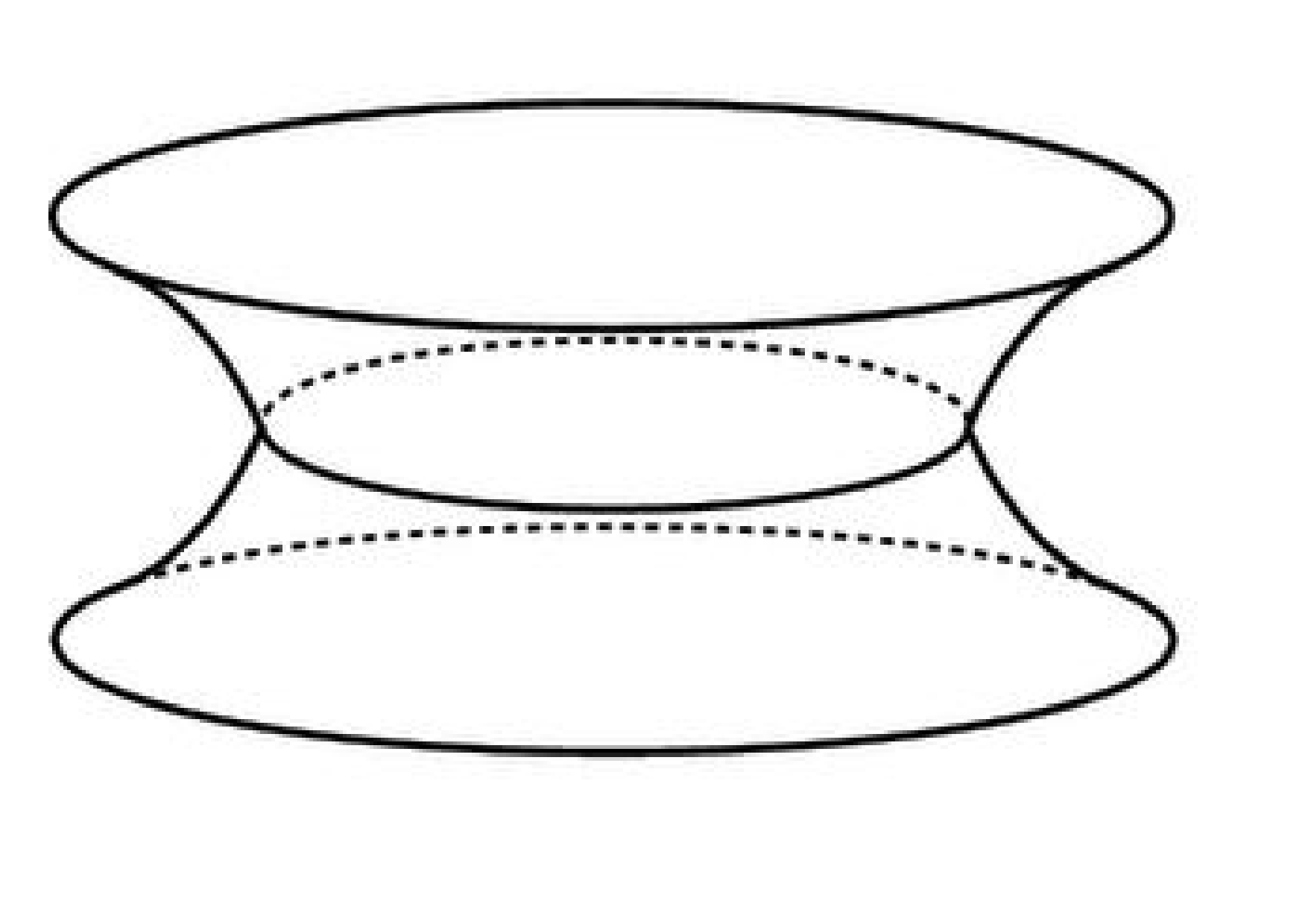}  
        \includegraphics[width=10cm, height=6cm]{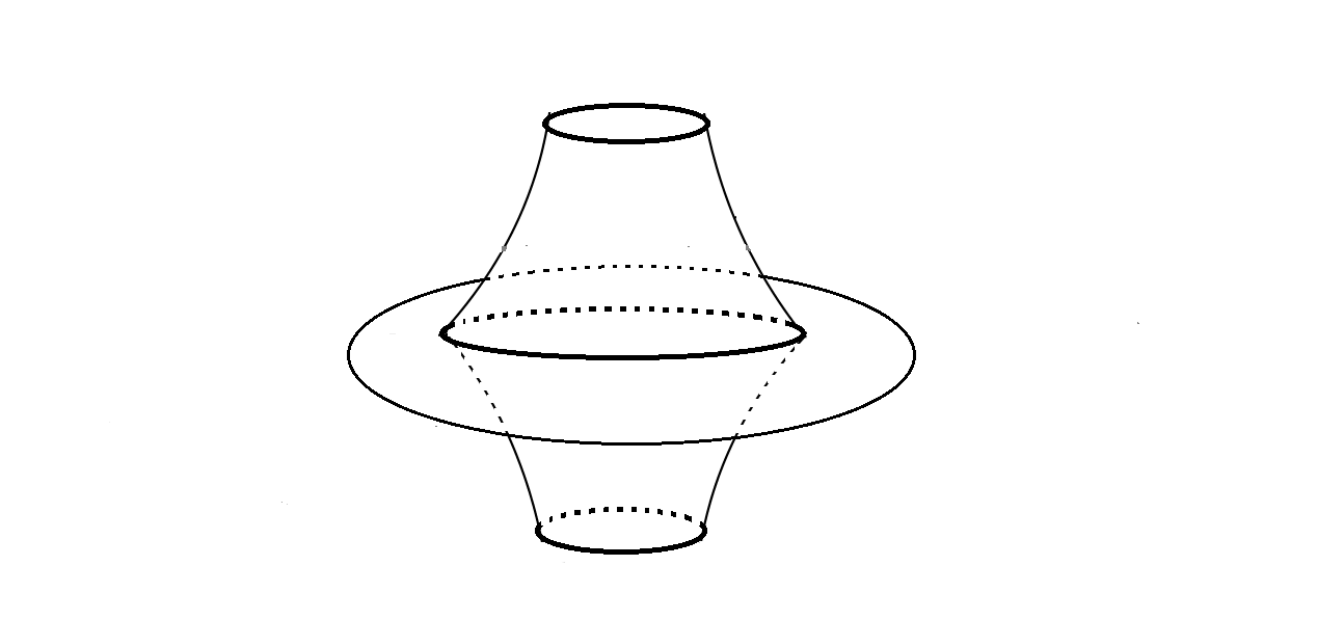} 
    \caption{a. The $Y$-catenoid(on the left), b. The pseudo $Y$-catenoid(on the right)}
\end{figure}
\section{index form evaluation} \label{sec: index evaluation on catenoidal}
In this section, we focus on understanding the Morse index of $Y$-surfaces. The Morse index intuitively quantifies the linearly independent admissible deformations leading to a second-order decrease in volume. Inspired by the work of Fraser and Schoen \cite{fraser2016sharp}, and Tran \cite{tran2016index}, we will reduce the analysis of the Morse index into two simpler problems. Specifically, we will study the variations fixing the interface and explore Steklov eigenfunctions associated with the Jacobi operator. 

\subsection{Index Decomposition.} \label{subsec: index decomp}
On each $M_i$ let us denote by 
\begin{equation}
    \nonumber
    J_i = \Delta_{M_i} + \vert A_{\bold{F}(M_i)} \vert^2
\end{equation}
the stability, or Jacobi, operator associated to $\bold{F} (M_i)$. We can unify these into a map 
\begin{equation}
    \nonumber
    J: C_{\neq}^2 (M) \rightarrow C_{\neq}^0 (M).
\end{equation}
We have, for $\phi \in C_{\Delta}^2 (M)$ with compact support that 
\begin{equation}
    \nonumber
    Q(\phi) = - \int_M \phi J\phi + \underset{i=1}{\overset{3}{\sum}} \int_{\Gamma} (\phi \vert_{\Gamma}^i\  \partial_{\nu_i} \phi\vert_{\Gamma}^i ) - \bold{H}_{\bold{F}(\Gamma)}\ .\ \nu_i (\phi \vert_{\Gamma}^i )^2
\end{equation}

We now consider an appropriate notion of Steklov eigenvalue problem on each $M_i$. This means $f\in C^2 (M_i),\ f\neq 0$ and satisfies
\begin{equation}
    \nonumber
    f\vert_{\partial' M_i} =0,\ J_i f =0,\ \partial_{\nu_i}f\vert_{\Gamma} = \delta f\vert_{\Gamma}
\end{equation}
If such a $f$ exists, then $g = f\vert_{\Gamma} \in C^2(\Gamma )$ is a $J_i$ Steklov eigenfunction with eigenvalue $\delta$. The existence of such Steklov eigenfunctions is standard, if the kernel of the Jacobi operator is trivial. Note that, $f$ is in the kernel of $J_i$ if $f\vert_{\partial' M_i \cup \Gamma} =0$ and $J_i f =0$.

Let us (for simplicity) make the following assumption, for $1\leq i\leq 3$,
\begin{equation}
    \nonumber
    F \in C_{v ,0}^2 (M_i),\ J_iF=0 \Longrightarrow F\equiv 0.
\end{equation}
This means that on each $M_i$ there is no element in the kernel of $J_i$ which vanishes on $\partial M_i =\partial' M_i \cup \Gamma_i$. In what follows this can be relaxed but it makes things a bit easier. \\
We make a second assumption, as follows:\\
If $g\in C^2(\Gamma )$ is a Steklov eigenfunction for one $J_i$, then it is also a Steklov eigenfunction for all other $J_k$. It is possible the corresponding Steklov eigenvalues are different. That is, there are $f_i \in C^2 (M_i)$ and $f_k \in C^2 (M_k)$ vanishing on $\partial' M_i$ and $\partial' M_k$ (resp.) and so
\begin{equation}
    \nonumber
    J_i f_i = 0,\ \partial_{\nu_i} f_i \vert_{\Gamma} = \delta_i f_i \vert_{\Gamma} =\delta_i g.
\end{equation} 
and 
\begin{equation}
    \nonumber
    J_k f_k =0,\ \partial_{\nu_k} f_k\vert_{\Gamma} =\delta_k f_k \vert_{\Gamma} =\delta_k g.
\end{equation}
Note that in general $\delta_i$ may be different from $\delta_k$. We call the value $\delta_k$ the Steklov eigenvalue on $M_k$ associated with $g$. \\
Given a $g\in C^2 (\Gamma )$ a Steklov eigenfunction of $J_1$ (and hence of all $J_k,\ k=1,2,3$) and a vector $\bold{c} = (c_1,c_2,c_3) \in \mathbb{R}^3$ by the above two assumptions, there is a unique element
\begin{equation}
    \nonumber
    f_{\bold{c}} \in C_{\neq}^2 (M)
\end{equation}
satisfying $f_{\bold{c}} \vert_{\partial M} =0,\ f_{\bold{c}} \vert_{\Gamma}^i =c_i g$ and $Jf_{\bold{c}} = 0$, we denote it by $Ext_J(g,\bold{c})$. If $\delta_i$ are the Steklov eigenvalue on each $M_i$ associated with $g$, then w have
\begin{equation}
    \nonumber
    \partial_{\nu_i} f_{\bold{c}} \vert_{\Gamma}^i =\delta_i f_{\bold{c}} \vert_{\Gamma}^i = \delta_i c_i g.
\end{equation}
Note that 
\begin{equation}
    \nonumber
    f_{(1,1,1)} \in C_{=}^2 (M),
\end{equation}
while our assumptions ensure that
\begin{equation}
    \nonumber
    f_{(0,0,0)} \equiv 0,
\end{equation}
as on each face $M_i$, this would correspond to an element in the kernel of $J_i$ that also vanishes on $\partial M_i =\partial' M_i \cup \Gamma_i$ and so must be identically zero by our first simplifying assumption. Finally, we note that if $c_1 +c_2 +c_3 =0$, then 
\begin{equation}
    \nonumber
    f_{\bold{c}} \in C_{\Delta}^2 (M).
\end{equation}
In general,
\begin{equation}
    \nonumber
    Q(f_{\bold{c}}) = \underset{i=1}{\overset{3}{\sum}} \int_{\Gamma} (f_{\bold{c}} \vert_{\Gamma}^i\  \partial_{\nu_i} f_{\bold{c}} \vert_i) - (\bold{H}_{\bold{F}(\Gamma)}\ .\ \nu_i )(f_{\bold{c}}\vert_{\Gamma}^i )^2 = \underset{i=1}{\overset{3}{\sum}} \int_{\Gamma} (\delta_i - \bold{H}_{\bold{F}(\Gamma)}\ .\ \nu_i) c_i^2g^2,
\end{equation}
when $c_1 +c_2 +c_3 =0$, then $f_{\bold{c}} \in C_{\Delta}^2 (M)$ and so this is a valid variation.\\
In general, for a Steklov eigenfunction $g\in C^2(\Gamma)$ with associated Steklov eigenvalues $\delta_k$ on $M_k$ we define
\begin{equation}
    \nonumber
    Ind_S(g)
\end{equation}
to be the index of the quadratic form
\begin{equation}
    \nonumber
    Q_g[\bold{c}] = \underset{i=1}{\overset{3}{\sum}} \int_{\Gamma} (\delta_i - \bold{H}_{\bold{F}(\Gamma)}\ .\ \nu_i)c_i^2 g^2
\end{equation}
on the subspace $c_1 +c_2 +c_3 =0$ of $\mathbb{R}^3$. Notice this is at most $2$ and is equal to $0$ once the $\delta_1,\ \delta_2$ and $\delta_3$ are all sufficiently large. \\
We note that there is an $L^2$ orthonormal basis $\{ g^{\alpha} \}_{\alpha}$ of $L^2(\Gamma)$ where each $g^{\alpha}$ is a Steklov eigenfunction with associated Steklov eigenvalues $\delta_k^{\alpha}$ on $M_k$. We order so 
\begin{equation}
    \nonumber
    \delta_k^1 \leq \delta_k^2 \leq ...\delta_k^{\alpha} \leq ...
\end{equation}
That is we use the ordering of the eigenvalues associated to the face $M_k$. \\
Consider the index form as the following map 
\begin{equation}
   \nonumber
  Q: C_{\Delta,0}^2(M) \rightarrow \mathbb{R}.
\end{equation}
To find the index we decompose the domain of the map in a way that is compatible with the index form $Q$, as follows:
\begin{equation}
    \nonumber
   C_{v,0}^2(M) \bigoplus  \big( \underset{\alpha}{\oplus} Ext_J(g^{\alpha},\bold{c}) \big),
\end{equation}
where $g^{\alpha} \in C^2(\Gamma)$ is a Steklov eigenfunction, and $\bold{c}=(c_1,c_2,c_3)$ so that $c_1+c_2+c_3=0$. 
\medskip

By our first assumption, we now have the following index formula
\begin{equation}
    \nonumber
    Ind_J (M) = \underset{i=1}{\overset{3}{\sum}} Ind_{J_i}^0 (M_i) + \underset{\alpha =0}{\overset{\infty}{\sum}} Ind_S(g^{\alpha} ).
\end{equation}
Here, $Ind_{J_i}^0 (M_i)$ is the usual index of the operator $J_i$ on $M_i$ where we are acting on functions that vanish on $\partial M_i = \partial' M_i \cup \Gamma_i$. Geometrically, $Ind_{J_i}^0 (M_i)$ measures the number of linearly independent variations of the face $M_i$ that decrease area and don't move the interface $\bold{F}(\Gamma)$. \\
The term
\begin{equation}
    \nonumber
    \underset{\alpha =0}{\overset{\infty}{\sum}} Ind_S(g^{\alpha} )
\end{equation}
measures the remaining number of linearly independent variations that decrease area and do move the interface. We note that for $\alpha$ sufficiently large all the Steklov eigenvalues are large and so $Ind_S(g^{\alpha}) =0$. i.e., the infinite sum is actually a finite sum.

\subsection{General Index Decomposition} The goal here is to relax some of the assumptions from the previous subsection. First of all, for $1\leq i\leq 3$ let  
\begin{equation}
    \nonumber
    \mathcal{K}_i=\{ u\in C_0^2(M_i)\ :\ J_iu=0\},
\end{equation}
be the kernel (with respect to Dirichlet boundary conditions on $\partial M_i = \partial ' M_i \cup \Gamma_i$).\\
We no longer assume this is the empty set. It is always finite dimensional however.\\
Let us denote by
\begin{equation}
    \nonumber
    \hat{\mathcal{N}_i} = \{f \in C^{\infty} (\Gamma_i)\ :\ f=\partial_{\nu_i} u,\ u\in \mathcal{K}_i \},
\end{equation}
be the set of outward derivatives of an element in $\mathcal{K}_i$, where $\nu_i$ is the outward conormal vector to $M_i$ along $\Gamma_i$. As observed by Tran \cite{tran2016index} this space has the same dimension as $\mathcal{K}_i$. \\
In general, the elements in the kernel are obstructions to existence an extension operator (and also to its uniqueness/well-posedness). However, using a slightly more general notion of extension allows one to overcome this. Indeed, using the Fredholm alternative we can form the following generalization of the notion of extension from $f\in C^{\infty}(\Gamma_i)$. 

\begin{prop}
    Fix $1\leq i\leq 3$. For any $f\in C^{\infty} (\Gamma_i)$ there is a unique function $u\in C^{\infty} (M_i)$ satisfying: \\
      $ (1)\ u\vert_{\partial ' M_i}=0\  \text{and}\  u\vert_{\Gamma_i} =f; $ \\
       $ (2)\ J_i u \in \mathcal{K}_i,\  \text{i. e., if}\ \mathcal{K}_i=\{0\} \ \text{is trivial this is the usual Jacobi function}; $  \\
       $  (3)\ \int_{M_i} u\phi =0,\  \text{for all}\  \phi \in \mathcal{K}_i$. \\
   We write
   \begin{equation}
       \nonumber
       \hat{E_i} (f) :=u
   \end{equation}
   for this unique extension.
\end{prop}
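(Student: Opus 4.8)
The plan is to realize $J_i$ as a self-adjoint Fredholm operator acting on functions with Dirichlet boundary conditions on $\partial M_i = \partial' M_i \cup \Gamma_i$, and to exploit the fact that its $L^2$-range is exactly the orthogonal complement $\mathcal{K}_i^{\perp}$ of its finite-dimensional kernel. First I would dispose of the inhomogeneous boundary data: fix once and for all a smooth $\tilde f \in C^{\infty}(M_i)$ with $\tilde f|_{\partial' M_i}=0$ and $\tilde f|_{\Gamma_i}=f$ (obtained by extending $f$ off a collar of $\Gamma_i$ and cutting off), and seek $u$ in the form $u = \tilde f + v$ with $v|_{\partial M_i}=0$. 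Conditions $(1)$–$(3)$ then translate to conditions on $v$: condition $(2)$ reads $J_i v = w - J_i\tilde f$ for some $w \in \mathcal{K}_i$, and condition $(3)$ is the normalization $u \perp \mathcal{K}_i$ in $L^2(M_i)$.

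For existence, let $P$ denote $L^2$-orthogonal projection onto $\mathcal{K}_i$. The Fredholm alternative makes $J_i v = w - J_i\tilde f$ solvable with zero boundary data precisely when $w - J_i\tilde f \perp \mathcal{K}_i$; since $w \in \mathcal{K}_i$, this forces the unique choice $w = P(J_i\tilde f)$, and then the right-hand side $-(I-P)(J_i\tilde f)$ indeed lies in $\mathcal{K}_i^{\perp}$. Solving produces a particular $v_0$, unique up to addition of an element of $\mathcal{K}_i$; I would remove this ambiguity by taking the unique $\psi \in \mathcal{K}_i$ with $\psi = -P(\tilde f + v_0)$, so that $u = \tilde f + v_0 + \psi$ satisfies $Pu = 0$, which is $(3)$. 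A direct check then confirms $(1)$ (the added terms vanish on $\partial M_i$) and $(2)$ (since $J_i u = w \in \mathcal{K}_i$).

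Uniqueness is the cleanest step and rests on self-adjointness via Green's identity. If $u_1,u_2$ both satisfy $(1)$–$(3)$, their difference $u$ vanishes on all of $\partial M_i$, has $J_i u \in \mathcal{K}_i$, and is $L^2$-orthogonal to $\mathcal{K}_i$. For any $\phi \in \mathcal{K}_i$, integration by parts (boundary terms drop because both $u$ and $\phi$ vanish on $\partial M_i$) gives $\int_{M_i} (J_i u)\,\phi = \int_{M_i} u\,(J_i\phi) = 0$, so $J_i u \perp \mathcal{K}_i$; together with $J_i u \in \mathcal{K}_i$ this forces $J_i u = 0$. Hence $u \in \mathcal{K}_i$, while also $u \perp \mathcal{K}_i$, so $u \equiv 0$. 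Finally, smoothness of $u$ follows from interior and up-to-boundary elliptic regularity applied to $J_i v = -(I-P)(J_i\tilde f)$, whose right-hand side and boundary data are smooth.

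The main obstacle beyond bookkeeping is the interplay of the two orthogonality conditions: one is \emph{forced} on us as the Fredholm solvability condition and is absorbed using the freedom in the target value $w = J_i u \in \mathcal{K}_i$, while the other is a \emph{normalization} we impose and realize through the residual kernel ambiguity in $v$. Keeping these separate—and confirming the associated finite-dimensional linear systems are uniquely solvable because the $L^2$-pairing is nondegenerate on $\mathcal{K}_i$—is the crux of the argument. A secondary technical point is regularity at the corners of $M_i$ where $\Gamma_i$ meets $\partial' M_i$; there I would either assume smoothness of the boundary or invoke the corresponding corner elliptic estimates.
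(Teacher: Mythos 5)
Your argument is correct and follows essentially the same route as the paper: fix an arbitrary smooth extension of the boundary data, use the Fredholm alternative to correct it by a function vanishing on $\partial M_i$ so that $J_i u$ lands in $\mathcal{K}_i$, and then use the residual kernel freedom to impose the $L^2$-orthogonality normalization. Your uniqueness step is in fact slightly more complete than the paper's, since you justify via Green's identity why the difference of two solutions has $J_i u \perp \mathcal{K}_i$ and hence lies in $\mathcal{K}_i$ before concluding it vanishes.
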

\begin{proof}
    Let $u_0 \in C^{\infty} (M_i)$ be some choice of function that satisfies the boundary conditions. That is, $u_0\vert_{\partial ' M_i}=0$ and $u_0\vert_{\Gamma_i} =f$. Set $v_0 =J_i u_0$. As $\mathcal{K}_i$ is finite dimensional, we may write
    \begin{equation}
        \nonumber
        v_0 = v_0 ' + v_0 '',
    \end{equation}
    where $v_0 ' \in \mathcal{K}_i$ and $\int_{M_i} v_0 '' \phi =0$ for all $\phi \in \mathcal{K}_i$. \\
    By the Fredholm alternative, there is a unique $ w'' \in C_0^{\infty} (M_i)$ so that
    \begin{equation}
        \nonumber
        J_i w'' =v_0 '',
    \end{equation}
    and $\int_{M_i}w'' \phi =0$ for all $\phi \in \mathcal{K}_i$. Hence, if we set
\begin{equation}
    \nonumber
    w=u_0 - w'',
\end{equation}
then $w$ satisfies the boundary condition of (1) and 
\begin{equation}
    \nonumber
    J_i w = J_i (u_0 -w'') = J_iu_0 - J_iw'' = v_0 -v_0 '' =v_0 ' \in \mathcal{K}_i.
\end{equation}
Finally, we may add an arbitrary element $u' \in \mathcal{K}_i$ and preserve these two conditions. 
By choosing, $u'$ to be the $L^2$ projection of $u_0$ onto $\mathcal{K}_i$ (recall $w''$ is already orthogonal) we can ensure that
\begin{equation}
    \nonumber
    \int_{M_i} (u_0 - u')\phi = \int_{M_i} (w-u')\phi =0,
\end{equation}
for all $\phi \in \mathcal{K}$. Hence, 
\begin{equation}
    \nonumber
    u=w-u' = u_0-u' -w''
\end{equation}
satisfies all three conditions. If there is another possible solution than $w-u' \in \mathcal{K}_i$, but is also orthogonal to $\mathcal{K}_i$, so it must vanish. This implies the solution is unique.
\end{proof}

For a $\bold{f}=(f_1,f_2,f_3)\in C^{\infty} (\Gamma ;\mathbb{R}^3)$ let us denote by
\begin{equation}
    \nonumber
    E(\bold{f}) \in C_{\neq ,0}^{\infty} (M)
\end{equation}
the function that satisfies
\begin{equation}
    \nonumber
    E(\bold{f})\vert_i = \hat{E}_i(f_i \circ \phi_i ),
\end{equation}
where $\phi_i \in \mathcal{K}_i$ - i. e., we extend onto all three faces simultaneously. Observe that this is a linear subspace and $E(\bold{0})$ is the zero map and, by uniqueness of the extension map, $E(\bold{f})=E(\bold{g})$ if and only if $\bold{f} =\bold{g}$. Moreover, by linearity $E(\bold{f} +\bold{g})=E(\bold{f})+E(\bold{g})$. \\
Let us denote by
\begin{equation}
    \nonumber
    C_{\Delta}^{\infty} (\Gamma ; \mathbb{R}^3) = \{ (f_1,f_2,f_3) \in C^{\infty} (\Gamma ;\mathbb{R}^3)\ :\ f_1+f_2+f_3 =0 \}.
\end{equation}
For $\bold{f} \in C_{\Delta}^{\infty} (\Gamma ;\mathbb{R}^3)$, then 
\begin{equation}
    \nonumber
    E(\bold{f}) \in C_{\Delta ,0}^{\infty} (M).
\end{equation}
As a special case, if $\bold{c} = (c_1,c_2,c_3)$ and $f\in C^{\infty} (\Gamma )$, then
\begin{equation}
    \nonumber
    E(\bold{c}f) = E(c_1f,c_2f,c_3f,\Gamma )
\end{equation}
lies in $C_{\Delta ,0}^{\infty}$ if and only if $c_1 +c_2 +c_3=0$. \\
Let
\begin{equation}
    \nonumber
    \mathcal{K} = \{ f \in C_{\neq ,0}^{\infty} (M)\ :\ f\vert_i \in \mathcal{K}_i,\ \text{for all}\ i \} 
\end{equation}
 this is a finite dimensional space. Let 
\begin{equation}
    \nonumber
    \mathcal{K}^{\perp} = \Big\{ f\in C_{v ,0}^{\infty} (M)\ :\ \int_{M} f\phi =0,\ \forall \phi \in \mathcal{K}  \Big\} \subset C_{v ,0}^{\infty} (M).
\end{equation}
Let 
\begin{equation}
    \nonumber
    \mathcal{N} = \{ (\partial_{\nu_1}u, \partial_{\nu_2} u,\partial_{\nu_3}u)\ :\ u\in \mathcal{K} \} = \{ (g_1,g_2,g_3)\ :\ g_i \in \hat{\mathcal{N}}_i \} \subset C^{\infty} (\Gamma;\mathbb{R}^3).
\end{equation}
and 
\begin{equation}
    \nonumber
    \mathcal{N}_{\Delta} = \mathcal{N} \cap C_{\Delta}^{\infty} (\Gamma ; 
    \mathbb{R}^3 ).
\end{equation}
Finally, let 
\begin{equation}
    \nonumber
   \begin{split}
   & N^+ = \{ f_1+f_2+f_3\ :\ (f_1,f_2,f_3)\in \mathcal{N} \} \subset C^{\infty} (\Gamma ), \\
   & \mathcal{N}^+ = \{ (f_1,f_2,f_3)\ :\ f_1 + f_2 + f_3\in N^+ ; f_i \in N^+ \ \forall i \} \subset C^{\infty} (\Gamma ; \mathbb{R}^3) \\
  & \mathcal{N}_{\Delta}^+ = \mathcal{N}^+ \cap C_{\Delta}^{\infty} (\Gamma ;\mathbb{R}^3 ).
    \end{split}
\end{equation}
$\mathcal{N}_{\Delta}^+$ is a finite dimensional space. Moreover, it follows from the definition that
\begin{equation}
    \nonumber
    \mathcal{N} \subset \mathcal{N}^+
\end{equation}
As $\mathcal{K}$ is finite dimensional, so is $\mathcal{N}$ and so can form
\begin{equation}
    \nonumber
   \begin{split}
    &\mathcal{N}^{\perp} = \Big \{ \bold{f} \in C^{\infty} (\Gamma ;\mathbb{R}^3 )\ :\ \int_{\Gamma} \bold{f}\ .\ \bold{g} =0,\ \forall \bold{g} \in \mathcal{N} \Big \} \subset C^{\infty} (\Gamma ;\mathbb{R}^3 ) \\
   & \mathcal{N}_{\Delta}^{\perp} = \mathcal{N}^{\perp} \cap C_{\Delta}^{\infty} (\Gamma ;\mathbb{R}^3) \subset \mathcal{N}^{\perp}.
    \end{split}
\end{equation}
and 
\begin{equation}
    \nonumber
    \begin{split}
    & (\mathcal{N}^+)^{\perp} = \Big \{ \bold{f} \in C^{\infty} (\Gamma ;\mathbb{R}^3)\ :\ \int_{\Gamma} \bold{f}\ .\ \bold{g} =0,\ \forall \bold{g} \in \mathcal{N}^+ \Big \} \subset C^{\infty} (\Gamma ; \mathbb{R}^3) \\
    & (\mathcal{N}_{\Delta}^+)^{\perp} = \mathcal{N}^{\perp} \cap C_{\Delta}^{\infty} (\Gamma ;\mathbb{R}^3) \subset (\mathcal{N}^+)^{\perp}.
    \end{split}
\end{equation}
As $\mathcal{N} \subset \mathcal{N}^+$ we have 
\begin{equation}
   \nonumber
   (\mathcal{N}^+)^{\perp} \subset \mathcal{N}^{\perp}\ \text{and}\ (\mathcal{N}_{\Delta}^+)^{\perp} \subset \mathcal{N}_{\Delta}^{\perp}
\end{equation}

\begin{lem} \label{lem: JE(f)=0}
    If $\bold{f} \in \mathcal{N}^{\perp}$, then 
    \begin{equation}
        \nonumber
        JE(\bold{f}) =0
    \end{equation}
\end{lem}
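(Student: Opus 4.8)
The plan is to prove the identity one face at a time. Since $JE(\mathbf{f})=0$ means $J_i\big(E(\mathbf{f})\vert_i\big)=0$ for each $i$, it suffices to fix $1\le i\le 3$, write $u_i := E(\mathbf{f})\vert_i = \hat{E}_i(f_i\circ\phi_i)$, and show $J_i u_i = 0$. By condition (2) of the Proposition we already know $J_i u_i \in \mathcal{K}_i$, so the extension can fail to be a genuine Jacobi field only by an element of the Dirichlet kernel. The strategy is therefore to show in addition that this error term $J_i u_i$ is $L^2$-orthogonal to $\mathcal{K}_i$; combined with $J_i u_i \in \mathcal{K}_i$ this forces it to vanish. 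Concretely, I will test against an arbitrary $\psi \in \mathcal{K}_i$, prove $\int_{M_i}(J_i u_i)\,\psi = 0$, and then specialize to $\psi = J_i u_i$ to conclude $\int_{M_i}(J_i u_i)^2 = 0$.

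The computation of $\int_{M_i}(J_i u_i)\,\psi$ proceeds by Green's second identity for the Schr\"odinger-type operator $J_i = \Delta_{M_i}+|A_{\mathbf{F}(M_i)}|^2$. The zeroth-order potential terms cancel in the symmetric pairing, leaving
\[
\int_{M_i}\big((J_i u_i)\,\psi - u_i\,(J_i\psi)\big) = \int_{\partial M_i}\big(\psi\,\partial_{\nu_i}u_i - u_i\,\partial_{\nu_i}\psi\big).
\]
Now I invoke the defining properties of $\psi$ and $u_i$. Since $\psi \in \mathcal{K}_i$ we have $J_i\psi = 0$ and $\psi\vert_{\partial M_i}=0$, so the left side reduces to $\int_{M_i}(J_i u_i)\,\psi$ and the term $\psi\,\partial_{\nu_i}u_i$ drops out of the right side. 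From condition (1) of the Proposition, $u_i\vert_{\partial' M_i}=0$ and $u_i\vert_{\Gamma_i}=f_i\circ\phi_i$, so splitting $\partial M_i = \partial' M_i \cup \Gamma_i$ collapses the remaining boundary integral to $-\int_{\Gamma_i}(f_i\circ\phi_i)\,\partial_{\nu_i}\psi$.

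The last step is to recognize this integral as a pairing already controlled by the hypothesis. By definition $\partial_{\nu_i}\psi \in \hat{\mathcal{N}}_i$, and the product description $\mathcal{N} = \{(g_1,g_2,g_3): g_i\in\hat{\mathcal{N}}_i\}$, together with $0\in\hat{\mathcal{N}}_j$ (valid since $0\in\mathcal{K}_j$), lets me test the condition $\mathbf{f}\in\mathcal{N}^{\perp}$ against the vector $(0,\dots,\partial_{\nu_i}\psi,\dots,0)$. This yields $\int_{\Gamma} f_i\,(\partial_{\nu_i}\psi\circ\phi_i^{-1}) = 0$, i.e. $f_i\perp\hat{\mathcal{N}}_i$ in $L^2(\Gamma)$, which is precisely the vanishing of the boundary integral after transporting via $\phi_i$. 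Hence $\int_{M_i}(J_i u_i)\,\psi=0$ for every $\psi\in\mathcal{K}_i$; taking $\psi = J_i u_i$ gives $J_i u_i\equiv 0$, and running the argument over $i=1,2,3$ finishes the proof.

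I expect the genuinely delicate points to be bookkeeping rather than analysis: keeping the diffeomorphisms $\phi_i:\Gamma_i\to\Gamma$ straight so that the three single-face orthogonality relations assemble into the one condition $\mathbf{f}\in\mathcal{N}^{\perp}$, and justifying that $\mathcal{N}$ genuinely decouples as the product $\hat{\mathcal{N}}_1\times\hat{\mathcal{N}}_2\times\hat{\mathcal{N}}_3$, so that orthogonality to $\mathcal{N}$ implies orthogonality to each factor separately. The analytic heart -- Green's identity combined with the observation that an element of $\mathcal{K}_i$ which is $L^2$-orthogonal to $\mathcal{K}_i$ must vanish -- is short; the main obstacle is simply ensuring every boundary term is handled with the correct boundary condition ($\partial' M_i$ versus $\Gamma_i$), so that nothing supported on $\partial' M_i$ survives.
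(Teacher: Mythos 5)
Your argument is correct and is essentially the paper's proof: both show that $J E(\mathbf{f})$ lies in $\mathcal{K}\cap\mathcal{K}^{\perp}=\{0\}$ by combining condition (2) of the extension with an integration by parts against kernel elements, using $\mathbf{f}\in\mathcal{N}^{\perp}$ to kill the boundary term on $\Gamma$. The only difference is presentational: you localize to each face via Green's identity and the product structure $\mathcal{N}=\hat{\mathcal{N}}_1\times\hat{\mathcal{N}}_2\times\hat{\mathcal{N}}_3$, while the paper runs the same computation globally through the symmetric bilinear form $Q$.
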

\begin{proof}
    For $\phi \in \mathcal{K}$ we have, by integration by parts, that
    \begin{equation}
        \nonumber
        0= - \int_{M} J(\phi) E(\bold{f}) = Q(\phi ,E(\bold{f})) = - \int_{M} \phi J(E(\bold{f}))
    \end{equation}
    where we use the fact that the index form $Q$ is a symmetric bilinear form, and  $\bold{f}\in \mathcal{N}^{\perp}$ to justify the middle equality, and using $\phi =0$ on $\partial M \cup \Gamma$ to conclude the last equality. Hence, $J(E(\bold{f})) \in \mathcal{K}^{\perp} \cap \mathcal{K}$ which implies the conclusion.
\end{proof}
Let us denote by
\begin{equation}
    \nonumber
    \mathcal{E}_{\Delta} =\{ E(\bold{f})\ :\ \bold{f} \ = (f_1,f_2,f_3) \in C_{\Delta}^{\infty} (\Gamma ;\mathbb{R}^3 ) \} \subset C_{\Delta ,0}^{\infty} (M)
\end{equation}
Likewise set 
\begin{equation}
    \nonumber
    \mathcal{E}_{\Delta} (\mathcal{N}_{\Delta}^+) = \{ E(\bold{f})\ :\ \bold{f} \in \mathcal{N}_{\Delta}^+ \} \subset \mathcal{E}_{\Delta}
\end{equation}
and 
\begin{equation}
    \nonumber
    \mathcal{E}_{\Delta} ((\mathcal{N}_{\Delta}^+)^{\perp}) = \{ E(\bold{f})\ :\ \bold{f} \in (\mathcal{N}_{\Delta}^+)^{\perp} \} \subset \mathcal{E}_{\Delta}
\end{equation}
Then we set
\begin{equation}
    \nonumber
   \mathcal{Z} =\mathcal{E}_{\Delta} (\mathcal{N}_{\Delta}^+) + \mathcal{K} \subset C_{\Delta ,0}^{\infty} (M).
\end{equation}
This is a finite dimensional space. \\
We now introduce a different quadratic form associated to the stability operator $J$. Namely, we let
\begin{equation}
    \nonumber
    Q' : C_{\Delta ,0}^{\infty} (M) \rightarrow \mathbb{R}
\end{equation}
be given by
\begin{equation}
    \nonumber
    \begin{split}
    Q'(\phi ) &= \underset{i=1}{\overset{3}{\sum}} \int_{M_i} \vert \nabla_{M_i} \phi \vert_i \vert^2 - \vert A_{\bold{F}(M_i)} \vert^2 (\phi \vert_i )^2 \\
    & = \underset{i=1}{\overset{3}{\sum}} \Big ( - \int_{M_i} \phi \vert_i J_i(\phi \vert_i ) + \int_{\Gamma} (\phi \vert_{\Gamma}^i \partial_{\nu_i} \phi \vert_{\Gamma}^i)   \Big )
    \end{split}
\end{equation}
In particular, 
\begin{equation}
    \nonumber
    Q(\phi) = Q'(\phi) - \underset{i=1}{\overset{3}{\sum}} \int_{\Gamma} \bold{H}_{\bold{F}(\Gamma)} \ .\ \nu_i (\phi \vert_{\Gamma}^i)^2
\end{equation}
when the terms 
\begin{equation} \label{eqn: m.c.v times conormal}
    \bold{H}_{\bold{F}(\Gamma)} \ .\ \nu_i \equiv h_i \in \mathbb{R}
\end{equation}
are constant this extra term becomes particularly simple.
\begin{prop}
    We have the decomposition
    \begin{equation}
        \nonumber
        C_{\Delta ,0}^{\infty} (M) = \mathcal{K}^{\perp} \oplus \mathcal{Z} \oplus \mathcal{E}_{\Delta} ((\mathcal{N}_{\Delta}^{+})^{\perp})
    \end{equation}
    Moreover, this decomposition is orthogonal with respect to $Q'$. \\
    Hence, writing $u\in  C_{\Delta ,0}^{\infty} (M) $ as 
    \begin{equation}
        \nonumber
        u=w+v+E(\bold{g})
    \end{equation}
    for some $w\in \mathcal{K}^{\perp} ,\ v \in \mathcal{Z}$ and $\bold{g} \in (\mathcal{N}_{\Delta}^{+})^{\perp}$, we have
    \begin{equation}
        \nonumber
        Q'(u) = Q'(w) + Q'(v) + Q' (E(\bold{g})).
    \end{equation}
    When (\ref{eqn: m.c.v times conormal}) holds, this is also true for $Q$.
\end{prop}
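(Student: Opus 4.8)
\section*{Proof proposal}

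The plan is to build the decomposition from the boundary trace along $\Gamma$ and then split the two natural pieces (the interior‑vanishing part and the extension part) using the orthogonal complements already set up. Given $u\in C_{\Delta,0}^{\infty}(M)$, its trace $\mathbf{f}=(u\vert_{\Gamma}^1,u\vert_{\Gamma}^2,u\vert_{\Gamma}^3)$ lies in $C_{\Delta}^{\infty}(\Gamma;\mathbb{R}^3)$. First I would split $\mathbf{f}=\mathbf{f}^{+}+\mathbf{g}$ via the $L^2(\Gamma)$–orthogonal decomposition $C_{\Delta}^{\infty}(\Gamma;\mathbb{R}^3)=\mathcal{N}_{\Delta}^{+}\oplus(\mathcal{N}_{\Delta}^{+})^{\perp}$, and use linearity $E(\mathbf{f})=E(\mathbf{f}^{+})+E(\mathbf{g})$, so that $E(\mathbf{f}^{+})\in\mathcal{E}_{\Delta}(\mathcal{N}_{\Delta}^{+})\subset\mathcal{Z}$ and $E(\mathbf{g})\in\mathcal{E}_{\Delta}((\mathcal{N}_{\Delta}^{+})^{\perp})$. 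Since the extension reproduces the trace (property (1)) and vanishes on $\partial M$, the difference $u-E(\mathbf{f})\in C_{v,0}^{\infty}(M)$, which I split as $k+w$ through $L^2(M)=\mathcal{K}\oplus\mathcal{K}^{\perp}$. Then $u=w+\big(k+E(\mathbf{f}^{+})\big)+E(\mathbf{g})$ gives the claimed sum. For directness, restricting any relation $w+v+E(\mathbf{g})=0$ to $\Gamma$ annihilates $w$ and yields $\mathbf{f}^{+}+\mathbf{g}=0$ with orthogonal summands, forcing $\mathbf{f}^{+}=\mathbf{g}=0$; the relation then reduces to $w+k=0$ with $w\perp k$, so $w=k=0$.

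For the $Q'$–orthogonality I would work from the integration‑by‑parts form $Q'(\phi,\psi)=\sum_{i}\big(-\int_{M_i}\psi\vert_i\,J_i(\phi\vert_i)+\int_{\Gamma}\psi\vert_{\Gamma}^i\,\partial_{\nu_i}\phi\vert_{\Gamma}^i\big)$, valid because every function here vanishes on $\partial'M_i$. Two of the three cross terms are immediate. The space $\mathcal{K}^{\perp}$ is $Q'$–orthogonal to both $\mathcal{Z}$ and $\mathcal{E}_{\Delta}((\mathcal{N}_{\Delta}^{+})^{\perp})$: since $w\vert_{\Gamma}=0$ kills the boundary integral, $Q'(w,\cdot)=-\int_M w\,J(\cdot)$, and $J$ sends a $\mathcal{Z}$–element into $\mathcal{K}$ (property (2)) and $E(\mathbf{g})$ to $0$ (Lemma \ref{lem: JE(f)=0}), both $L^2(M)$–orthogonal to $w$. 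Likewise the $\mathcal{K}$–summand of $\mathcal{Z}$ pairs with $E(\mathbf{g})$ only through $\int_{\Gamma}\mathbf{g}\cdot(\partial_{\nu_i}k\vert_i)_i$, which vanishes because $(\partial_{\nu_i}k\vert_i)_i\in\mathcal{N}$ while $\mathbf{g}\in(\mathcal{N}_{\Delta}^{+})^{\perp}\subset\mathcal{N}^{\perp}$.

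The remaining term $Q'(E(\mathbf{f}^{+}),E(\mathbf{g}))$, with $\mathbf{f}^{+}\in\mathcal{N}_{\Delta}^{+}$ and $\mathbf{g}\in(\mathcal{N}_{\Delta}^{+})^{\perp}$, is the crux, and I expect it to be the main obstacle. Using $JE(\mathbf{g})=0$ (Lemma \ref{lem: JE(f)=0}) to drop the interior integral, it collapses to the boundary (Dirichlet‑to‑Neumann) pairing $\int_{\Gamma}\mathbf{f}^{+}\cdot T\mathbf{g}$, where $T\mathbf{h}:=(\partial_{\nu_i}\hat{E}_i(h_i))_i$; property (3) makes $T$ $L^2(\Gamma)$–symmetric, so this also equals $\int_{\Gamma}\mathbf{g}\cdot T\mathbf{f}^{+}$. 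The point is then to show $T\mathbf{f}^{+}$ lies in $\mathcal{N}^{+}$ modulo the diagonal of $C^{\infty}(\Gamma;\mathbb{R}^3)$, whence it is orthogonal to $\mathbf{g}\in(\mathcal{N}^{+})^{\perp}\cap C_{\Delta}^{\infty}(\Gamma;\mathbb{R}^3)$ and the term vanishes. This $T$–invariance (equivalently, that $T$ preserves $(\mathcal{N}^{+})^{\perp}$) is precisely what the saturation $N^{+}=\hat{\mathcal{N}}_1+\hat{\mathcal{N}}_2+\hat{\mathcal{N}}_3$ and $\mathcal{N}^{+}$ were built to guarantee, in the spirit of Tran \cite{tran2016index}; establishing it carefully is the one genuinely technical step.

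Finally, when (\ref{eqn: m.c.v times conormal}) holds I would write $Q=Q'-\beta$ with $\beta(\phi,\psi)=\sum_i h_i\int_{\Gamma}\phi\vert_{\Gamma}^i\,\psi\vert_{\Gamma}^i$, a form depending only on traces. As $w$ has vanishing trace, the sole new cross term is $\beta(v,E(\mathbf{g}))=\sum_i h_i\int_{\Gamma}f_i^{+}g_i=\int_{\Gamma}(h_1 f_1^{+},h_2 f_2^{+},h_3 f_3^{+})\cdot\mathbf{g}$. Here the constancy of the $h_i$ is decisive: each $f_i^{+}\in N^{+}$ and $N^{+}$ is a linear subspace, so $(h_i f_i^{+})_i\in\mathcal{N}^{+}$, which is $L^2(\Gamma)$–orthogonal to $\mathbf{g}\in(\mathcal{N}^{+})^{\perp}$; hence the term vanishes. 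Thus $\beta$ splits diagonally over the three summands as well, and combining with the $Q'$–splitting gives $Q(u)=Q(w)+Q(v)+Q(E(\mathbf{g}))$, completing the statement.
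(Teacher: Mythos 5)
Your proposal follows essentially the same route as the paper's proof: split the trace along $\Gamma$ via $\mathcal{N}_{\Delta}^{+}\oplus(\mathcal{N}_{\Delta}^{+})^{\perp}$, subtract the extension to land in $C_{v,0}^{\infty}(M)$, project onto $\mathcal{K}$, and kill the cross terms by integration by parts using $J(\mathcal{Z})\subset\mathcal{K}$ and Lemma \ref{lem: JE(f)=0}; your handling of the correction term for $Q$ under (\ref{eqn: m.c.v times conormal}) likewise matches, and in fact slightly sharpens, the paper's. The one step you leave open --- that the Dirichlet-to-Neumann pairing $\int_{\Gamma}\mathbf{f}^{+}\cdot T\mathbf{g}$ vanishes, i.e.\ that $T$ respects the splitting --- is precisely the step the paper dispatches with the one-line assertion that it follows ``by integrating by parts and using the orthogonality between $\mathcal{N}_{\Delta}^{+}$ and $(\mathcal{N}_{\Delta}^{+})^{\perp}$,'' so you have correctly isolated where the remaining content lies rather than omitted anything the paper actually supplies.
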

\begin{proof}
    We first observe that by the uniqueness of the extension the only $u\in C_{v,0}^{\infty} (M) \cap \mathcal{E}_{\Delta}$ is the zero function. Hence,
    \begin{equation}
        \nonumber
        \mathcal{K}^{\perp} \cap \mathcal{E}_{\Delta} (\mathcal{N}_{\Delta}^{\perp}) = \mathcal{K}^{\perp} \cap \mathcal{Z} = \{0\}.
    \end{equation}
    Similarly, if $u\in \mathcal{Z} \cap \mathcal{E}_{\Delta} ((\mathcal{N}_{\Delta}^{+})^{\perp})$ then $u$ is zero on $\Gamma$ by definition, and so 
    $u$ is extension of zero and it is thus zero by uniqueness. That is, 
    \begin{equation}
        \nonumber
        \mathcal{Z} \cap \mathcal{E}_{\Delta} ((\mathcal{N}_{\Delta}^{+})^{\perp}) = \{ 0\}.
    \end{equation}
    Hence, it is enough to show that 
    \begin{equation}
        \nonumber
     C_{\Delta ,0}^{\infty} (M) = \mathcal{K}^{\perp} + \mathcal{Z} + \mathcal{E}_{\Delta} ((\mathcal{N}_{\Delta}^{+})^{\perp})
    \end{equation}
    To that end, suppose $u\in C_{\Delta ,0}^{\infty} (M)$, and let
    \begin{equation}
        \nonumber
        \bold{f} = (u\vert_{\Gamma}^1, u\vert_{\Gamma}^2, u\vert_{\Gamma}^3) \in C^{\infty} (\Gamma ; \mathbb{R}^3).
    \end{equation}
    By definition, $\bold{f} \in C_{\Delta}^{\infty} (\Gamma ; \mathbb{R}^3)$. By taking an appropriate $L^2$ projection and using the fact that the space $\mathcal{N}^+$ is finite dimensional we may write
    \begin{equation}
        \nonumber
        \bold{f} = \bold{f}_0 + \bold{g},
    \end{equation}
    where $\bold{f}_0 \in \mathcal{N}^+$ and $\bold{g} \in (\mathcal{N}^+)^{\perp}$. One can check that the orthogonality properties of the two spaces interacts well with the condition of lying in $C_{\Delta}^{\infty} (\Gamma ; \mathbb{R}^3)$ and so 
    \begin{equation}
        \nonumber
        \bold{f} \in C_{\Delta}^{\infty} (\Gamma ; \mathbb{R}^3) \Longleftrightarrow \bold{f}_0 \in \mathcal{N}_{\Delta}^+ \ \text{and}\ \bold{g} \in (\mathcal{N}_{\Delta}^+)^{\perp}
    \end{equation}
    Hence, 
    \begin{equation}
        \nonumber
        E(\bold{f}) = E(\bold{f}_0) + E(\bold{g}) \in \mathcal{E}_{\Delta}
    \end{equation}
    and
    \begin{equation}
        \nonumber
        v' = u - E(\bold{f}) \in C_{v,0}^{\infty}(M).
    \end{equation}
    As $\mathcal{K}$ is finite dimensional one can orthogonally project $v'$ onto $\mathcal{K}$ to obtain $v_0$ and then write
    \begin{equation}
        \nonumber
        v' =v_0 +w,
    \end{equation}
    for $v_0 \in \mathcal{K}$ and $w =-v_0 +v' \in \mathcal{K}^{\perp}$. We thus have 
    \begin{equation}
        \nonumber
        u= w +v_0+E(\bold{f}) =w+v_0+E(\bold{f}_0) +E(\bold{g}) = w +v+E(\bold{g}),
    \end{equation}
    where $v_0 \in \mathcal{K},\ w\in \mathcal{K}^{\perp}$ and $v = v_0 +E(\bold{f}_0) \in \mathcal{Z}$. \\
    Now let us denote the bilinear forms associated to $Q$ and $Q'$, by
    \begin{equation}
        \nonumber
        Q(\phi ,\psi )\ \text{and}\ Q'(\phi ,\psi ).
    \end{equation}
    That is, $Q(\phi ) = Q(\phi ,\phi )$. We observe that integration by parts tells us that 
    \begin{equation}
        \nonumber
        Q(v,w) = Q'(v,w)=0,
    \end{equation}
    where we used $Jv =0$ and $v,w=0$ on $\partial M \cup \Gamma$. Likewise,
    \begin{equation*}
        \nonumber
        Q(w,E(\bold{f})) = Q'(w,E(\bold{f})) = - \int_M wJ(E(\bold{f})) =0,
    \end{equation*}
    where we used the fact that $w=0$ on $\partial M \cup \Gamma $ and $J(E(\bold{f})) \in \mathcal{K} $ is orthogonal to $w\in \mathcal{K}^{\perp}$. In particular, $Q'(w,E(\bold{g}))=0$. \\
    Hence, it remains to show 
    \begin{equation}
        \nonumber
        Q'(v,E(\bold{g})) = Q'(v_0 + E(\bold{f}_0),E(\bold{g})) =0.
    \end{equation}
    To that end, we observe that by the lemma \ref{lem: JE(f)=0}, $J(E(\bold{g}))=0$ and so integrating by parts as above tells us
    \begin{equation}
        \nonumber
        Q(v_0,E(\bold{g})) = Q'(v_0, E(\bold{g})) =0.
    \end{equation}
    Likewise, integrating by parts and using the orthogonality between $\mathcal{N}_{\Delta}^+$ and $(\mathcal{N}_{\Delta}^+)^{\perp}$
    \begin{equation}
        \nonumber
        Q'(E(\bold{f}_0),E(\bold{g})) =0.
    \end{equation}
    Finally, when (\ref{eqn: m.c.v times conormal}) holds, the orthogonality between $\mathcal{N}_{\Delta}^+$ and $(\mathcal{N}_{\Delta}^+)^{\perp}$ still gives 
    \begin{equation}
        \nonumber
        Q(E(\bold{f}_0),E(\bold{g})) =0.
    \end{equation}
\end{proof}
    
\begin{cor}
     To compute the index of $Q$, assuming (\ref{eqn: m.c.v times conormal}) holds, it is enough to compute the index of $Q$ restricted to the spaces $\mathcal{K}^{\perp} ,\ \mathcal{Z}$ and $\mathcal{E}_{\Delta}((\mathcal{N}_{\Delta}^+)^{\perp})$.
\end{cor}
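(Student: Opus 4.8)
The plan is to deduce the corollary directly from the preceding Proposition, which supplies both the direct sum decomposition
\[
C_{\Delta ,0}^{\infty}(M) = \mathcal{K}^{\perp} \oplus \mathcal{Z} \oplus \mathcal{E}_{\Delta}((\mathcal{N}_{\Delta}^{+})^{\perp})
\]
and the crucial fact that, when (\ref{eqn: m.c.v times conormal}) holds, this splitting is orthogonal with respect to $Q$ itself and not merely with respect to $Q'$. The entire content of the corollary is then the general linear-algebra principle that the Morse index, i.e. the maximal dimension of a subspace on which a symmetric bilinear form is negative definite, is \emph{additive} over a direct sum decomposition that is orthogonal for that form. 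So the task reduces to recording this additivity and checking that the hypotheses needed to run it are in place. Throughout I write $V_1 = \mathcal{K}^{\perp}$, $V_2 = \mathcal{Z}$, $V_3 = \mathcal{E}_{\Delta}((\mathcal{N}_{\Delta}^{+})^{\perp})$, so that for $u = w_1 + w_2 + w_3$ with $w_j \in V_j$ the Proposition yields $Q(u) = Q(w_1) + Q(w_2) + Q(w_3)$.

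First I would establish the easy lower bound. Choose in each $V_j$ a subspace $W_j$ of dimension $\mathrm{Ind}(Q|_{V_j})$ on which $Q$ is negative definite. For $u = w_1 + w_2 + w_3$ with $w_j \in W_j$, each term $Q(w_j)$ is $\le 0$, and if $u \neq 0$ then some $w_j \neq 0$ forces the corresponding term to be strictly negative; hence $Q(u) < 0$. Thus $W_1 \oplus W_2 \oplus W_3$ is negative definite of dimension $\sum_j \mathrm{Ind}(Q|_{V_j})$, giving $\mathrm{Ind}(Q) \ge \sum_j \mathrm{Ind}(Q|_{V_j})$.

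For the reverse inequality I would, on each $V_j$, choose a maximal $Q$-negative subspace $W_j$ of dimension $\mathrm{Ind}(Q|_{V_j})$ together with a complement $V_j = W_j \oplus P_j$ on which $Q$ is nonnegative; such a splitting exists precisely because each restricted index is finite. The $Q$-orthogonality of the $V_j$ then makes $Q$ nonnegative on $P := P_1 \oplus P_2 \oplus P_3$, so any subspace $W$ on which $Q$ is negative definite meets $P$ only in $0$ and therefore injects into the quotient $V/P$, whose dimension is $\sum_j \dim W_j$. Hence $\dim W \le \sum_j \mathrm{Ind}(Q|_{V_j})$, and combining the two inequalities yields
\[
\mathrm{Ind}(Q) = \mathrm{Ind}(Q|_{\mathcal{K}^{\perp}}) + \mathrm{Ind}(Q|_{\mathcal{Z}}) + \mathrm{Ind}\big(Q|_{\mathcal{E}_{\Delta}((\mathcal{N}_{\Delta}^{+})^{\perp})}\big),
\]
which is exactly the asserted reduction.

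The hard part will be justifying the upper bound in the present infinite-dimensional setting: whereas $\mathcal{Z}$ is finite dimensional, both $\mathcal{K}^{\perp}$ and $\mathcal{E}_{\Delta}((\mathcal{N}_{\Delta}^{+})^{\perp})$ are not, so the existence of the nonnegative complements $P_j$ is not automatic and must be underwritten by finiteness of each restricted index. I expect to handle this by identifying those indices explicitly: on $\mathcal{K}^{\perp}$ the boundary terms drop and $Q(f) = -\int_M f\,Jf$, so $\mathrm{Ind}(Q|_{\mathcal{K}^{\perp}})$ equals the sum $\sum_i \mathrm{Ind}_{J_i}^0(M_i)$ of the ordinary fixed-interface indices, while the index on the extension space reduces to that of the finite-rank Steklov-type forms $Q_g$. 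Both are finite, so only finitely many directions are ever negative and the block argument can be carried out on a finite-dimensional reduction. The analytic subtlety of passing from compactly supported functions to the $W_{\Delta}^{1,2}\cap L^2$ completion, flagged in the earlier Remark, and then back via a cutoff, would be invoked to guarantee these indices are attained and finite.
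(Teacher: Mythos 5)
Your proposal is correct and follows the same route the paper intends: the corollary is stated as an immediate consequence of the preceding Proposition's $Q$-orthogonal direct sum decomposition, for which the paper supplies no further argument, and you have filled in the standard additivity-of-index lemma that makes this rigorous. Your worry about the nonnegative complements $P_j$ is handled even more simply than you suggest: once $\mathrm{Ind}(Q|_{V_j})$ is finite, taking $W_j$ maximal negative definite and $P_j$ its $Q$-orthogonal complement in $V_j$ (which splits $V_j$ since $Q|_{W_j}$ is nondegenerate) forces $Q|_{P_j}\geq 0$ by maximality, so no spectral identification is needed for the algebra to close.
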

   
Our second assumption is substantiated by the following arguments, 
\begin{prop}
    Suppose that $M$ is a $Y$-noid.  If $g\in C^2(\Gamma)$ is a Steklov eigenfunction for $J_i$, then it is either a Steklov eigenfunction for $J_k$, or $g\in \hat{\mathcal{N}}_k$, for $k\neq i$.
\end{prop}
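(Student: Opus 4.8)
The plan is to exploit the rotational symmetry that is built into the definition of a $Y$-noid. Since $\Gamma$ is a round circle centered at the origin and each face $\Sigma_i$ is a catenoidal (or planar) surface foliated by circles in planes parallel to $\{x_3=0\}$, each operator $J_i$ together with the boundary conditions defining the Steklov problem is invariant under the $SO(2)$-action rotating about the $x_3$-axis. Writing $\Gamma \cong S^1$ with angular coordinate $\theta$, the Dirichlet-to-Neumann (Steklov) operator $\Lambda_k : g \mapsto \partial_{\nu_k}(\text{extension of } g)|_\Gamma$ associated to each $J_k$ therefore commutes with rotations, and so is diagonalized by the Fourier modes $e^{in\theta}$. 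Consequently its eigenspaces are spanned by $\{e^{\pm in\theta}\}$ at a single $|n|$ (once one knows the Steklov eigenvalue $\delta_i$ is a simple function of $|n|$), and the given Steklov eigenfunction $g$ of $J_i$ is a real combination of $e^{\pm in\theta}$ for one fixed integer $n$. The whole argument then reduces to a fixed frequency $n$, and from now on I fix $g$ to be this mode.

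The next step is a separation-of-variables analysis on a face $M_k$ with $k \neq i$. Parametrize $M_k$ by $(s,\theta)$, where $s$ is the arclength (meridian) coordinate along the profile curve and $\theta$ is the rotation angle, so that $\Gamma_k$ sits at one end $s = s_\Gamma$. Seeking Jacobi fields of the form $\psi(s)e^{in\theta}$, the equation $J_k(\psi(s)e^{in\theta}) = 0$ collapses to a second-order linear ODE $L_n^k \psi = 0$ in $s$, whose solution space is two-dimensional. Imposing the remaining boundary condition at $\partial' M_k$ (the Dirichlet condition $\psi = 0$ there, or regularity at the center when the face is the disk $D_\Gamma$, or the appropriate boundedness/decay condition at a noncompact catenoidal end) cuts this down to a one-dimensional space; I let $\psi^k$ be a nonzero generator, which is not identically zero by uniqueness for linear ODEs.

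The conclusion then follows from a dichotomy according to the value $\psi^k(s_\Gamma)$. If $\psi^k(s_\Gamma) \neq 0$, then $u_k := \psi^k(s)e^{in\theta}$ satisfies $J_k u_k = 0$, vanishes on $\partial' M_k$, and restricts to a nonzero multiple of $g$ on $\Gamma$, with $\partial_{\nu_k} u_k|_\Gamma = \big((\psi^k)'/\psi^k\big)(s_\Gamma)\, u_k|_\Gamma$ up to the conormal sign; thus $g$ is a Steklov eigenfunction for $J_k$ with eigenvalue $\delta_k = \big((\psi^k)'/\psi^k\big)(s_\Gamma)$. If instead $\psi^k(s_\Gamma) = 0$, then $u_k$ vanishes on all of $\partial M_k = \partial' M_k \cup \Gamma_k$ while solving $J_k u_k = 0$, so $u_k \in \mathcal{K}_k$; moreover $(\psi^k)'(s_\Gamma) \neq 0$ (otherwise $\psi^k \equiv 0$ by ODE uniqueness), so $\partial_{\nu_k} u_k|_\Gamma$ is a nonzero multiple of $g$, giving $g \in \hat{\mathcal{N}}_k$. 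These two cases are mutually exclusive and exhaustive, which is exactly the claimed alternative.

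The main obstacle I expect is making the frequency reduction airtight and handling the faces whose outer boundary is degenerate. For the reduction I should confirm that the Steklov eigenspaces are genuinely single-frequency, i.e. that $\delta_i(|n|)$ is injective (as it is for the disk, where $\delta = |n|/R$, and for the catenoidal profiles), so that no eigenfunction mixes two distinct frequencies, for which the dichotomy could fail componentwise. I also need to specify, case by case, the correct replacement for the Dirichlet condition at $\partial' M_k$ — regularity at the center for a disk face, and boundedness/decay of the Jacobi field at a noncompact catenoidal end — and in each case check that the selected solution space is one-dimensional, so that the dichotomy above is clean. The per-frequency ODE existence and the nonvanishing of $(\psi^k)'$ at a simple zero are standard, so the real work is organizing these boundary normalizations uniformly across the (at most three) geometric types of face.
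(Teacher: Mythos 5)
Your argument is correct and is essentially the paper's: the paper proves this proposition simply by forward reference to Propositions 4.1 and 4.2, which carry out exactly your separation-of-variables analysis explicitly on each rotationally symmetric face (catenoidal end, disk, punctured plane), exhibit the Steklov eigenfunctions as single Fourier modes, and realize your second alternative in the exceptional case $T=0$, where the radial solution $\tanh(t+T)$ vanishes on $\Gamma$, hence lies in $\mathcal{K}_k$ and contributes its conormal derivative to $\hat{\mathcal{N}}_k$. The loose ends you flag --- single-frequency eigenspaces and the correct decay or regularity condition replacing the Dirichlet condition at a noncompact or degenerate outer boundary --- are settled there by the explicit formulas and the restriction to the weighted space $L_*^2$.
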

\begin{proof}
This follows as a corollary to the propositions 4.1 and 4.2 which we will prove later in the section 4. 
\end{proof}

Now suppose that $\mathcal{K}_1\neq \{ 0 \}$ and $\mathcal{K}_2 =\mathcal{K}_3 =\{ 0\}$, then $N^+=\hat{\mathcal{N}}_1$ . Let $\mathcal{N}^+=Span\{ \partial_{\nu_1} k\ ;\  k\in \mathcal{K} \}$, then $\mathcal{Z}=\{ (\alpha k,0,0)+(c_1E_1(\phi ),c_2E_2(\phi ),c_3E_3(\phi ))\ ;\ k\in \mathcal{K}\ \&\  \phi \in \mathcal{N}^+ \}$. Let $\psi \in \mathcal{Z}$, we can write $JE_1 (\phi )=\beta k$, and check that $JE_2(\phi )=JE_3(\phi )=0$ and $\int_{M_1} E_1(\phi )k=0$. We compute the index form
\begin{equation}
    \nonumber
    \begin{split}
    Q(\psi)&=Q((\alpha k+c_1E_1(\phi ),0,0)) + Q(0,c_2E_2(\phi ),c_3E_3(\phi )) \\
    &=\alpha^2 Q(k,k) +2\alpha c_1 Q(k,E_1(\phi )) +c_1^2 Q(E_1(\phi ),E_1(\phi ))\\
    &+ \int_{\Gamma} c_2^2(\delta_2-\beta_2)+c_3^2(\delta_3-\beta_3)\\
    &=-2\alpha c_1 \beta \int_{M_1} k^2 +  \int_{\Gamma} c_1^2(\delta_1-\beta_1) + \int_{\Gamma} c_2^2(\delta_2-\beta_2)+c_3^2(\delta_3-\beta_3)\\
    \end{split} 
\end{equation}
\medskip

Let $Ind(\mathcal{Z})$ to be the index of $Q$ restricted to the space $\mathcal{Z}$. 
Note that if $\mathcal{K}=\{0\}$ then $\mathcal{Z}=\{0\}$ and so $Ind(\mathcal{Z}) =0$. In general, if $\mathcal{K}\neq \{0\}$, then we conclude by two cases for them either $c_1\neq 0$ or $c_1=0$. If $c_1\neq 0$ then we can change $\alpha$ to make $Q(\phi )$ negative. If $c_1=0$ then $c_2=-c_3$ and 
$$
Q(\phi ) = \int_{\Gamma} c_2^2((\delta_2-\beta_2)+ (\delta_3-\beta_3))
$$
Hence, in this case the index sign depends on the sign of $(\delta_2-\beta_2)+ (\delta_3-\beta_3)$. 
\medskip

Define $\{ g^{\dot{\alpha}}\} $ to be a subfamily of $\{ g^{\alpha}\}$ for $\dot{\alpha} \in \alpha$ and $g^{\dot{\alpha}} \notin \mathcal{N}^+=Span\{ \partial_{\nu_1} k\ ;\  k\in \mathcal{K} \}$,
then by the corollary 3.4 and the proposition 3.5,

\begin{cor} \label{coro: general index decom}
    The index of $Q$, when (\ref{eqn: m.c.v times conormal}) and the previous proposition hold, is 
    \begin{equation}
        \nonumber
        Ind_J(M)=\underset{i=1}{\overset{3}{\sum}} Ind_{J_i}^0(M_i)  + \underset{\dot{\alpha} }{{\sum }} Ind_{S}(g^{\dot{\alpha}}) +  Ind(\mathcal{\mathcal{Z}}) 
    \end{equation}
\end{cor}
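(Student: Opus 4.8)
The plan is to read the formula off the $Q$-orthogonal splitting
$$C_{\Delta,0}^{\infty}(M) = \mathcal{K}^{\perp} \oplus \mathcal{Z} \oplus \mathcal{E}_{\Delta}((\mathcal{N}_{\Delta}^{+})^{\perp})$$
supplied by the decomposition proposition. Since this splitting is orthogonal with respect to $Q$ whenever (\ref{eqn: m.c.v times conormal}) holds, the index form is block diagonal and its index is the sum of the indices of the three restrictions; this is exactly the content of the preceding corollary. The whole task therefore reduces to matching the index of $Q$ on each summand with one of the three terms on the right-hand side. The middle block is free: by definition $Ind(\mathcal{Z})$ is the index of $Q$ restricted to $\mathcal{Z}$, so it contributes the last term verbatim.

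For the first summand I would observe that $\mathcal{K}^{\perp}\subset C_{v,0}^{\infty}(M)$ consists of functions vanishing on $\Gamma$ and on $\partial M$, so all boundary integrals in $Q$ drop out and $Q(\phi)=\sum_{i=1}^{3}-\int_{M_i}(\phi\vert_i)\,J_i(\phi\vert_i)$ splits face by face. Its index is then $\sum_{i=1}^{3} Ind_{J_i}^{0}(M_i)$. The one thing to verify is that restricting from $C_{v,0}^{\infty}(M)$ to $\mathcal{K}^{\perp}$ does not alter the index: this holds because $\mathcal{K}$ is precisely the simultaneous Dirichlet kernel of the $J_i$, hence the null space of $Q$ on $C_{v,0}^{\infty}(M)$, which is $Q$-degenerate and contributes nothing.

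The heart of the argument is the third summand, $\mathcal{E}_{\Delta}((\mathcal{N}_{\Delta}^{+})^{\perp})$, which I claim realizes $\sum_{\dot\alpha} Ind_S(g^{\dot\alpha})$. Here I would expand each boundary datum $\bold{f}\in(\mathcal{N}_{\Delta}^{+})^{\perp}$ in the $L^2(\Gamma)$-orthonormal Steklov basis $\{g^{\alpha}\}$ and, using $JE(\bold{f})=0$ from Lemma \ref{lem: JE(f)=0}, integrate by parts to reduce $Q(E(\bold{f}))$ to the boundary form $\sum_i\int_{\Gamma}(\delta_i-h_i)(f_i)^2$. Because distinct Steklov eigenfunctions are $L^2(\Gamma)$-orthogonal and the $h_i$ are constant by (\ref{eqn: m.c.v times conormal}), the cross terms between different $g^{\alpha}$ vanish, so $Q$ block diagonalizes over $\alpha$, each block being the finite-dimensional form $Q_{g^{\alpha}}[\bold{c}]$ on $\{c_1+c_2+c_3=0\}$ with index $Ind_S(g^{\alpha})$. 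Proposition 3.5 (the Steklov compatibility statement) guarantees a single basis serves all three faces — each $g^{\alpha}$ is either a common Steklov eigenfunction or lies in some $\hat{\mathcal{N}}_k$ — and the definition of $\{g^{\dot\alpha}\}$ removes exactly the modes already absorbed into $\mathcal{N}^{+}$, and hence into $\mathcal{Z}$. Summing over the surviving $\dot\alpha$ gives $\sum_{\dot\alpha} Ind_S(g^{\dot\alpha})$.

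The main obstacle is the bookkeeping in this last step: one must check that the orthogonality condition $(\mathcal{N}_{\Delta}^{+})^{\perp}$ matches the subfamily $\{g^{\dot\alpha}\}$ exactly, so that no Steklov mode is either double-counted against the $\mathcal{Z}$-block or silently dropped, and that the constraint $c_1+c_2+c_3=0$ cutting out $C_{\Delta}$ interacts correctly with the Steklov expansion — i.e. that admissible variations in $\mathcal{E}_{\Delta}((\mathcal{N}_{\Delta}^{+})^{\perp})$ correspond to $\bold{c}$ in the plane $\{c_1+c_2+c_3=0\}$. This is precisely where the compatibility between the $C_{\Delta}$ conditions and the $L^2$ projections, already exploited in the decomposition proposition, must be invoked with care.
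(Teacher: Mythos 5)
Your proposal is correct and follows essentially the same route as the paper: apply the $Q$-orthogonal decomposition $C_{\Delta,0}^{\infty}(M)=\mathcal{K}^{\perp}\oplus\mathcal{Z}\oplus\mathcal{E}_{\Delta}((\mathcal{N}_{\Delta}^{+})^{\perp})$, identify the index on $\mathcal{K}^{\perp}$ with $\sum_i Ind_{J_i}^0(M_i)$, take $Ind(\mathcal{Z})$ by definition, and use Lemma \ref{lem: JE(f)=0} to reduce the index on the extension block to the Steklov forms $Q_{g^{\dot\alpha}}$. If anything, your treatment of the third block is more careful than the paper's: you expand the boundary datum in the Steklov basis and verify the cross-terms vanish, whereas the paper simply asserts that $\bold{f}\vert_i$ \emph{is} a Steklov eigenfunction, which strictly speaking should be an expansion rather than an identification.
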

\begin{proof}
    The index of $Q$ is $\underset{i=1}{\overset{3}{\sum}} Ind_{J_i}^0(M_i)$ on the space $\mathcal{K}^{\perp}$, moreover, $J_i^0(M_i)$ is only defined on $\mathcal{K}^{\perp}$. By proposition 3.3,   $C_{\Delta ,0}^{\infty} (M)= \mathcal{K}^{\perp} \oplus \mathcal{Z} \oplus \mathcal{E}_{\Delta}((\mathcal{N}_{\Delta}^+)^{\perp})$, and so we only need to find the index of $Q$ on the space $\mathcal{E}_{\Delta}((\mathcal{N}_{\Delta}^+)^{\perp}) + \mathcal{Z}$. By the definition, $Ind(\mathcal{Z})$ is the index of $Q$ on $\mathcal{Z}$.  And, the space $\mathcal{E}_{\Delta}((\mathcal{N}_{\Delta}^+)^{\perp})$ is the space of all $E(\bold{f})$ where $\bold{f} \in (\mathcal{N}_{\Delta}^+)^{\perp}$. By the lemma 3.2, $JE(\bold{f})=0$, and so $\bold{f}\vert_i$ must be a Steklov eigenfunction for each $i$, and so $\bold{f}\vert_i \in \{ g^{\alpha}\}$. However, if $\bold{f}\vert_i \in \mathcal{N}^+$, it follows that $\bold{f}\vert_i \in \mathcal{Z}$.  
    
    Hence, the index of $Q$ is  $\underset{\dot{\alpha} }{\sum } Ind_{S}(g^{\dot{\alpha}}) +  Ind (\mathcal{Z}) $  on the space $\mathcal{E}_{\Delta}((\mathcal{N}_{\Delta}^+)^{\perp}) + \mathcal{Z}$. 
\end{proof}

\begin{rem}
     \rm This represents the index of a compact triple junction minimal surface $M$ with $\partial M\neq \emptyset$. However, our examples are non-compact, specifically with $\partial M= \emptyset$. To determine the index for general non-compact surfaces, we confine the domain of the index form to $W^{1,2}(M) \cap L_{*}^2(M)$. Here, $W^{1,2}$ denotes the corresponding Sobolev space, and $L_{*}^2(M)$ is a weighted $L^2$-space introduced by Chodosh-Maximo \cite{chodosh2018topology}. They demonstrated that this weighted space serves as an optimal choice, aiding in identifying the maximal dimension of the admissible subspace within the domain of the index form. For further details on this space, refer to \cite{chodosh2018topology}, however, we add the definition of the weighted norm for the sake of completeness.
\end{rem}
 \begin{defn}
     The weighted space $L_*^2(M)$ is the completion of compactly supported smooth functions 
    with respect to the norm
     \begin{equation}
         \nonumber
         \vert \vert f\vert \vert_{L_*^2(M)}^2 := \int_M f^2 (1+\vert x\vert^2)^{-1} (\log (2+\vert x\vert ))^{-2}
     \end{equation}
     where $\vert x\vert $ is the Euclidean distance.
 \end{defn}
\section{Morse Index of the $Y$-noid family} \label{section 4}

\begin{prop} \label{prop: kernel(J) of cat-faces}
    Let $X:[0,\infty ) \times \mathbb{S}^1 \rightarrow \mathbb{R}^3$ parameterizes an end of the catenoid, $\Sigma$, by  
    \begin{equation}
\label{eqn: y-noids maps}
\nonumber
X (t,\theta ) =  c\big( \cosh  (t  + T ) \cos \theta ,  \cosh   (t + T ) \sin \theta ,    \pm t \big),    \hspace{0.5cm}  t\in [0, \infty ),\ \theta \in [0,2\pi), 
\end{equation}
 for $T\in \mathbb{R}$. Then the kernel of the Jacobi operator $J$ restricted to the $L_*^2(\Sigma)$-weighted space is given by $Ker(J(\Sigma))\cap L_*^2(\Sigma)=$ Closure $  (W_0+ W_1+\cdots + W_n+ \cdots )$, where
    \begin{equation}
\begin{cases}
\nonumber
& W_0 = Span\{ \tanh (t+T)\} \\
& W_1 = Span \{ (c_{1} \cos \theta  + c_{2} \sin \theta ) \frac{1}{\cosh (t+T)} \} \\
& W_n = Span \{ (a_{n} \cos (n\theta )  + b_{n} \sin (n\theta )) (n + \tanh (t+T))e^{-n(t+T)}\} \ \   \ ;n \geq 2
\end{cases}
\end{equation}
Moreover, for each $w_n \in W_n$, the restriction $w_n\vert_{\{t=0\}}$ is a Steklov eigenfunction associated with the Steklov eigenvalue
 \begin{equation}
\begin{cases}
\nonumber
&   \delta_0 = -\frac{1}{c \cosh^2 T \sinh T} \\
& \delta_1  = \frac{\sinh T}{c \cosh^2 T} \\
&  \delta_n  = -\frac{1 - ( n\cosh^2 T ) (n + \tanh T)}{c (n + \tanh T) \cosh^3 T}\ \  \ ;n \geq 2
\end{cases}
\end{equation}
respectively. An exception is the particular case when $T=0$, where the function $w_0$ vanishes on $\Gamma$. Consequently, $w_0 \in \mathcal{K}$ and, as a result, $w_0 \vert_{\{t=0\}}\equiv 0$ does not qualify as a Steklov eigenfunction.
\end{prop}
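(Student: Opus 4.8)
The plan is to reduce the equation $J\phi=0$ to a family of ordinary differential equations via separation of variables, exploiting that the catenoid is conformally flat in the given coordinates. First I would compute the induced metric of $X$: a direct calculation gives the conformal form $g = c^2\cosh^2(t+T)\,(dt^2+d\theta^2)$, so that $\Delta_\Sigma = \frac{1}{c^2\cosh^2(t+T)}(\partial_t^2+\partial_\theta^2)$, while minimality gives $|A|^2 = -2K = \frac{2}{c^2\cosh^4(t+T)}$. Hence $J\phi=0$ is equivalent, after clearing the conformal factor, to $(\partial_t^2+\partial_\theta^2)\phi + \frac{2}{\cosh^2(t+T)}\phi = 0$. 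Writing $s=t+T$ and expanding $\phi$ in the Fourier modes $\cos(n\theta),\sin(n\theta)$, each coefficient $f=f(s)$ solves $f'' + \big(\tfrac{2}{\cosh^2 s}-n^2\big)f = 0$, a P\"oschl--Teller equation with the reflectionless potential $2\,\mathrm{sech}^2$.

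Next I would identify, for each $n$, the one-dimensional space of $L^2_*$-admissible solutions. The ODE is second order, so its solution space is two-dimensional; it suffices to exhibit one solution and to show the complementary independent solution fails to be $L^2_*$. Substituting the claimed functions $\tanh s$ (for $n=0$), $\mathrm{sech}\,s$ (for $n=1$), and $(n+\tanh s)e^{-ns}$ (for $n\ge 2$) verifies directly that they solve the ODE. To confirm membership and rule out their partners, I would translate the weighted norm: on the end $|x|\sim \tfrac{c}{2}e^{s}$ and $dA = c^2\cosh^2 s\,ds\,d\theta$, so the weight $(1+|x|^2)^{-1}(\log(2+|x|))^{-2}\,dA$ behaves like $s^{-2}\,ds\,d\theta$ for large $s$; thus $f\in L^2_*$ reduces to $\int^\infty f(s)^2 s^{-2}\,ds<\infty$. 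Under this criterion the displayed bounded or decaying solutions lie in $L^2_*$, whereas their linearly growing ($n=0$) or exponentially growing ($n\ge 1$) partners do not. An arbitrary element of $Ker(J)\cap L^2_*$ expands in the $\theta$-Fourier basis with coefficients solving these ODEs and lying in $L^2_*$, hence each coefficient is a multiple of the admissible solution; taking the $L^2_*$-closure of the span over all $n$ yields the stated description of the kernel.

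Finally I would compute the Steklov data. Each $w_n$ is a product $f_n(s)\,\Theta_n(\theta)$ with $Jw_n=0$ and $w_n\to 0$ at infinity, the analogue of the vanishing on $\partial' M$ in the Steklov problem, so its trace on $\Gamma=\{t=0\}$ is a genuine Steklov datum. The conformal metric shows the unit outward conormal along $\Gamma$ is $\nu = -\frac{1}{c\cosh T}\partial_t$, whence $\partial_\nu w_n|_{\Gamma} = -\frac{1}{c\cosh T}\frac{f_n'(T)}{f_n(T)}\,w_n|_{\Gamma}$, identifying the eigenvalue as $\delta_n = -\frac{1}{c\cosh T}\,f_n'(T)/f_n(T)$. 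Evaluating $f_n'/f_n$ at $s=T$ in the three cases and simplifying produces the claimed $\delta_0,\delta_1,\delta_n$. The exceptional case $T=0$ falls out transparently: then $w_0=\tanh t$ vanishes at $t=0$, so it lies in $\mathcal{K}$ rather than giving a Steklov eigenfunction, matching the blow-up of $\delta_0=-1/(c\cosh^2 T\sinh T)$ as $T\to 0$.

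I expect the main obstacle to be the functional-analytic step asserting that the $L^2_*$-closure of $\bigoplus_n W_n$ captures the \emph{entire} kernel: one must justify that a general $L^2_*$ Jacobi field admits a $\theta$-Fourier expansion converging in the weighted norm and that the growing radial solution is genuinely excluded in every mode, with enough uniform-in-$n$ control of the decay that the series closes up. By contrast, the pointwise ODE verifications and the Steklov computation are routine once the conformal reduction is in place.
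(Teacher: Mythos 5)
Your proposal is correct and follows essentially the same route as the paper: separation of variables in the conformal coordinates, reduction to the P\"oschl--Teller equation $f''+(2\,\mathrm{sech}^2 s-n^2)f=0$ for each Fourier mode, exclusion of the growing partner solution via the $L^2_*$ weight, and identification of $\delta_n$ from the ratio $f_n'/f_n$ at $t=0$, with the same treatment of the degenerate case $T=0$. Your explicit normalization of the outward conormal as $-\tfrac{1}{c\cosh T}\partial_t$ is in fact the convention consistent with the stated eigenvalues, and is spelled out more carefully than in the paper's own computation.
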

\begin{proof}
    Assuming $T \neq 0$, let $\Sigma$ represent the catenoidal end parameterized above. Since $\Sigma$ is a rotationally symmetric surface, the method of separation of variables is applicable. Let $w = f(t)g(\theta ) \in W^{1,2} (\Sigma ) \cap L_{*}^2(\Sigma)$. The Jacobi operator on $\Sigma$ and the directional derivative operator along the boundary curve are 
$$
J = \Delta_{\Sigma} + \vert A \vert^2  = \frac{1}{\cosh^2 (t+T)} (\Delta  + \frac{2}{\cosh^2 (t+T)} ),\ \ D_{\nu} =\frac{1}{\cosh (t+T)} (\frac{\partial}{\partial t})
$$
\medskip

Considering $w = f(t)g(\theta)$ where $g = c_1 \cos (n\theta) + c_2 \sin (n\theta)$ for $n \geq 0$, the partial differential equation transforms into:

\begin{equation}
\nonumber
\label{pde: ycat}
\begin{cases}
(\partial_t^2  +  \frac{2}{\cosh^2 (t+T)}  - n^2 )  f = 0    &\text{where}\  \{t > 0\} \\
D_{\nu} f  \    =  \delta  f     &\text{where}\  \{ t = 0 \} 
\end{cases}
\end{equation}
 $\bullet $ \  For $n = 0$, the solutions of the Jacobi operator are given by the linear combinations of: 
 \begin{center}
\begin{equation}
\nonumber
\begin{cases}
 w= \tanh (t+T),  \\
w=1 - (t+T)\tanh (t+T),
\end{cases}
\end{equation}
\end{center}
The function $1 - (t+T)\tanh (t+T)$ is not in the weighted space $L_*^2(\Sigma)$, hence it is excluded. But the function $ \tanh (t+T)$ is in the $W^{1,2}(\Sigma) \cap L_*^2 (\Sigma)$-space. \\
We restrict $w$ to the interface, $\{ t=0\}$, and then calculate the associated Steklov eigenvalue: 
$$
 w\vert_{\{t=0\}}= \tanh (T), \hspace{1cm}  \delta = -\frac{1}{c \cosh^2 T \sinh T}  
$$

$\bullet $ \ For $n = 1$, there are two solutions to the Jacobi operator:
\begin{equation}
\nonumber
\begin{cases}
w=(c_1\cos \theta  + c_2\sin \theta  )\frac{1}{\cosh (t+T)},  \\
w= (c_1\cos \theta  + c_2\sin \theta  )(\sinh (t+T) + \frac{t+T}{\cosh (t+T)})     
\end{cases}
\end{equation}
The function $ (c_1\cos \theta  + c_2\sin \theta  )(\sinh (t+T) + \frac{t+T}{\cosh (t+T)})$ is not in the $L_{*}^2 (\Sigma)$-weighted space so it is excluded. But, the function $(c_1\cos \theta  + c_2\sin \theta  )\frac{1}{\cosh (t+T)} $ is in the $W^{1,2}(\Sigma) \cap L_{*}^2(\Sigma)$-space. \\
Calculate the associated Steklov eigenvalue:
$$
 w\vert_{\{t=0\}}= (c_1\cos \theta  + c_2\sin \theta  )\frac{1}{\cosh (T)}, \hspace{1cm}  \delta  = \frac{\sinh T}{c \cosh^2 T} 
$$

$\bullet$  \ For $n \geq 2$, the solutions to the Jacobi operator are linear combinations of
\begin{equation}
\nonumber
\begin{cases}
 (a_{n} \cos (n\theta )  + b_{n} \sin (n\theta ))(n - \tanh (t+T))e^{n(t+T)}  , \hspace{1cm}      \\
 (a_{n} \cos (n\theta )  + b_{n} \sin (n\theta ))(n + \tanh (t+T))e^{-n(t+T)} 
\end{cases} 
\end{equation}

The only corresponding eigenfunction which is in the $L_{*}^2(\Sigma)$-weighted space is $ (a_n\cos (n\theta )  + b_n\sin (n\theta ) )  (n + \tanh (t+T))e^{-n(t+T)} $. \\
Calculate the associated eigenvalue:
$$
 (a_n \cos (n\theta )  + b_n \sin (n\theta ))(n + \tanh (t+T))e^{-n(t+T)} , \hspace{0.5cm}  \delta  = -\frac{1 - ( n\cosh^2 T ) (n + \tanh T)}{c (n + \tanh T) \cosh^3 T} \ ;n \geq 2
$$
It's important to note that for $T\neq 0$, none of these functions vanish at the interface.\\
Now, consider the case where $T=0$. The function $w_0$ is the only one that becomes zero on $\Gamma$. Consequently, $w_0 \in \mathcal{K}$ does not qualify as a Steklov eigenfunction for Steklov problem on $\Gamma =\partial \Sigma$.
\end{proof}
\medskip

\begin{prop} \label{prop: Ker(J) of flat-faces}
   Let $X_1 : [0,R_0] \times \mathbb{S}^1 \rightarrow \mathbb{R}^3$ and $X_2 : [R_0,\infty ) \times \mathbb{S}^1 \rightarrow \mathbb{R}^3$ parameterize $\bar{\mathbb{D}}_{R_0}$ and $\mathbb{D}_{R_0}^c = \mathbb{C} \backslash \mathbb{D}_{R_0}$ by
$$
 X_1 (r, \theta ) = R_0\big(  r \cos \theta ,  r \sin \theta , 0 \big),\hspace{0.5cm} r\in [0,1],\ \theta \in [0,2\pi)$$ and
$$
 X_2 (r, \theta ) = R_0\big( r \cos \theta , r \sin \theta , 0 \big),\hspace{0.5cm} r\in [1,\infty ),\ \theta \in [0,2\pi)$$
 respectively. Here, $\bar{\mathbb{D}}_{R_0}$ is the closure of the disk with radius $R_0$, and $\mathbb{D}_{R_0}^c$ is the plane minus the disk $\mathbb{D}_{R_0}$.\\
 Then the kernel of the Jacobi operator $J$ on the closed disk, $ker(J(\mathbb{D}_{R_0}))$, is spanned by the linear combinations of the functions $w_0, w_1,\cdots $,
  \begin{equation}
\begin{cases}
\nonumber
& w_0 = 1,  \\
& w_n(r, \theta ) = a_n r^n \cos n\theta  + b_n r^n \sin n\theta,\ \  n\geq 1
\end{cases}
\end{equation}
where the restriction to the interface, $w_k\vert_{r=1}$, is the $J$-Steklov eigenfunction associated with the eigenvalue $\delta_k$:
 \begin{equation}
\begin{cases}
\nonumber
& w_0\vert_{r=1} = 1, \hspace{6cm}  \delta_0 = 0 \\
& w_n(r, \theta )\vert_{r=1} = a_n \cos n\theta  + b_n  \sin n\theta    \hspace{1.8cm}  \delta_n  = \frac{n}{R_0},\ n\geq 1
\end{cases}
\end{equation}
 And, the kernel of the Jacobi operator on the punctured plane restricted to the $L_*^2(\mathbb{D}_{R_0}^c)$-weighted space, $Ker(J(\mathbb{D}_{R_0}^c))\cap L_*^2(\mathbb{D}_{R_0}^c)$, is spanned by the linear combinations of the functions $w_0,w_1,\cdots ,$
 \begin{equation}
\begin{cases}
\nonumber
& w_0 = 1, \\
& w_n (r, \theta ) = a_n r^{-n} \cos n\theta  + b_n r^{-n} \sin n\theta ,\  \ n\geq 1 
\end{cases}
\end{equation}
where the restriction to the interface, $w_k\vert_{r=1}$, is the $J$-Steklov eigenfunction associated with the eigenvalue $\delta_k$:
 \begin{equation}
\begin{cases}
\nonumber
& w_0 \vert_{r=1} = 1, \hspace{6.1cm}  \delta_0 = 0 \\
& w_n (r, \theta )\vert_{r=1} = a_n  \cos n\theta  + b_n  \sin n\theta    \hspace{1.5cm}  \delta_n  = \frac{n}{R_0},\  n\geq 1 
\end{cases}
\end{equation}
\end{prop}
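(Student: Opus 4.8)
The plan is to exploit the fact that both $\bar{\mathbb{D}}_{R_0}$ and $\mathbb{D}_{R_0}^c$ are pieces of the flat plane $\{x_3 = 0\}$, so their second fundamental form vanishes identically and $|A|^2 \equiv 0$. The Jacobi operator therefore collapses to the Laplace--Beltrami operator, $J = \Delta_\Sigma$, and computing its kernel amounts to finding harmonic functions on the disk and on its exterior. Since both parametrizations are rotationally symmetric --- a direct computation shows the induced metric in the coordinates $(r,\theta)$ is $R_0^2(dr^2 + r^2\,d\theta^2)$, a constant conformal rescaling of the standard flat polar metric, so $\Delta_\Sigma = \tfrac{1}{R_0^2}\Delta_{\mathrm{flat}}$ and harmonic functions are unchanged --- the method of separation of variables applies. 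Writing $w = f(r)g(\theta)$ with $g \in \{\cos n\theta,\ \sin n\theta\}$, the equation $J w = 0$ reduces to the Euler equation
\[
f'' + \tfrac{1}{r}f' - \tfrac{n^2}{r^2}f = 0.
\]

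Next I would record the general solution: for $n \geq 1$ the two independent solutions are $r^n$ and $r^{-n}$, while for $n = 0$ they are $1$ and $\log r$. The admissible solutions on each domain are then selected by boundary behaviour. On the closed disk $\bar{\mathbb{D}}_{R_0}$, which contains the origin $r = 0$, smoothness (equivalently, that the mode be a genuine harmonic polynomial $\mathrm{Re}((x+iy)^n)$) forces us to discard the singular solutions $r^{-n}$ and $\log r$, leaving exactly $w_0 = 1$ and $w_n = r^n(a_n\cos n\theta + b_n\sin n\theta)$ for $n \geq 1$. On the exterior $\mathbb{D}_{R_0}^c$, which extends to $r = \infty$, the selection is instead governed by membership in the weighted space $L_*^2$.

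The one point demanding genuine care is this $L_*^2$ selection on the exterior. Using $|x| = R_0 r$ together with the weight $(1+|x|^2)^{-1}(\log(2+|x|))^{-2}$, I would check the radial integrability of each mode at infinity. The constant $1$ has radial integrand behaving like $r^{-1}(\log r)^{-2}$, which integrates; the decaying modes $r^{-n}$ integrate even faster; but $\log r$ produces an integrand $\sim r^{-1}$ and $r^n$ ($n \geq 1$) an integrand $\sim r^{2n-1}(\log r)^{-2}$, both of which diverge. This is precisely the dichotomy the Chodosh--Maximo weight is engineered to produce, and it retains exactly $w_0 = 1$ together with $w_n = r^{-n}(a_n\cos n\theta + b_n\sin n\theta)$ for $n \geq 1$.

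Finally, to read off the Steklov eigenvalues I would compute the unit outward conormal derivative at $r = 1$. Since $|\partial_r| = R_0$ in the induced metric, the conormal derivative acting on a radial function $f$ is $\pm\tfrac{1}{R_0}f'$, with the sign determined by orientation. On the disk the outward direction is $+\partial_r$, so $\partial_\nu w_n = \tfrac{1}{R_0}\cdot n\, w_n|_{r=1}$, giving $\delta_n = n/R_0$; the constant gives $\delta_0 = 0$. On the exterior the outward conormal points toward the origin, i.e. $-\partial_r$, and this sign reversal combines with $\partial_r r^{-n} = -n\,r^{-n-1}$ to again yield $\delta_n = n/R_0$ (and $\delta_0 = 0$). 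The only real obstacle is bookkeeping: tracking the $R_0$-scaling of the metric and the orientation of the conormal, since it is exactly the orientation flip on the exterior domain that makes the two eigenvalue families coincide.
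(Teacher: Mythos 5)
Your proposal is correct and follows essentially the same route as the paper: observe that $|A|^2\equiv 0$ so $J=\Delta$, separate variables to get the Euler equation, select modes by smoothness at the origin on the disk and by membership in $L_*^2$ on the exterior, and read off the Steklov eigenvalues from the conormal derivative at $r=1$. If anything, you are more explicit than the paper on the two points it glosses over — the radial integrability check against the Chodosh--Maximo weight and the orientation reversal of the outward conormal on the exterior face, which is what makes both families of eigenvalues equal $n/R_0$.
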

\begin{proof}
   On both surfaces, the disk and the punctured plane, the method of separation of variables applies. Let $u(r,\theta ) = g(\theta )f(r)$ be a function on $\mathbb{D}_{R_0}$, then
   \begin{equation}
   g(\theta) =
\begin{cases}
\nonumber
&  1 \hspace{4cm}  n = 0 \\
&  a_n  \cos n\theta  + b_n  \sin n\theta    \hspace{1cm} n > 0
\end{cases}
\end{equation}
and
\begin{equation}
f(r)=
\begin{cases}
\nonumber
& 1 \hspace{2.1cm}  n = 0 \\
& r^n   \hspace{2cm}  n>0
\end{cases}
\end{equation}
Moreover, $J=\Delta $ on $\mathbb{D}_{R_0}$, and solving for $J=0$ we find that $u(r,\theta )$s are in the kernel of $J$. In particular, by solving for
$$
\begin{cases}
    \nonumber
    \Delta u =0\ \  \ r\neq 1 \\
    D_{\nu} u=\delta u\ \  \ r=1
\end{cases}
$$
we can list the Steklov eigenfunctions associated with the eigenvalues, $\delta$s, as follows:
\begin{equation}
\begin{cases}
\nonumber
&  w_0\vert_{r=1} =1 \hspace{5cm}  \delta_0 = 0 \\
&  w_n \vert_{r=1}= a_n  \cos n\theta  + b_n  \sin n\theta    \hspace{1cm} \delta_n = \frac{n}{R_0},\ n\geq 1
\end{cases}
\end{equation}
Now let $u(r,\theta )=g(\theta )f(r)$ be a function on the surface $\mathbb{D}_{R_0}^c$, then 
\begin{equation}
   g(\theta) =
\begin{cases}
\nonumber
&  1 \hspace{4cm}  n = 0 \\
&  a_n  \cos n\theta  + b_n  \sin n\theta    \hspace{1cm} n > 0
\end{cases}
\end{equation}
and
\begin{equation}
f(r)=
\begin{cases}
\nonumber
& 1\ \text{or}\  \log (\frac{r}{R_0}) +1  \hspace{1cm}  \text{for}\ n = 0 \\
& r^n\  \text{or}\ r^{-n}  \hspace{2cm} \text{for}\  n>0
\end{cases}
\end{equation}
We still have $J=\Delta$, and  $u(r,\theta )$s are in the kernel of $J$. In particular, by solving a PDE similar to one for the disk, we find that the restrictions to the interface are $J$-Steklov eigenfunctions. We note that the functions $\log (\frac{r}{R_0}) +1$, $r^n \cos (n\theta)$, and $r^n \sin (n
\theta )$ are not in the $L_*^2(\mathbb{D}_{R_0}^c )$-weighted space.
Hence, we can enumerate the permitted Steklov eigenfunctions along with their associated eigenvalues as follows:
\begin{equation}
\begin{cases}
\nonumber
&  w_0 \vert_{r=1}=1 \hspace{5cm}  \delta_0 = 0 \\
&  w_n \vert_{r=1}= a_n  \cos n\theta  + b_n  \sin n\theta    \hspace{1cm} \delta_n = \frac{n}{R_0},\ n\geq 1
\end{cases}
\end{equation}
\end{proof}

\subsection{Index Form on the Example of $Y$-Catenoid}
\begin{thm}
    The Morse index of the $Y$-catenoid is one, and the nullity is three.
\end{thm}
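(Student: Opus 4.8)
The plan is to feed the explicit data from Propositions~\ref{prop: kernel(J) of cat-faces} and~\ref{prop: Ker(J) of flat-faces} into the index decomposition of Corollary~\ref{coro: general index decom}, which collapses to a small computation in this symmetric example.

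First I would fix the geometry. Let $R_0$ be the radius of $\Gamma$; then $\Sigma_1=\mathbb D_{R_0}$ and $\Sigma_2,\Sigma_3$ are the two symmetric catenoidal ends of Proposition~\ref{prop: kernel(J) of cat-faces} sharing one parameter $T$. Imposing $\eta_1+\eta_2+\eta_3=0$ (equivalently the $120^\circ$ contact with the plane) forces $\tanh T=\tfrac12$, so $T=\tfrac12\log 3\neq 0$ and $c=R_0/\cosh T$. Since $T\neq0$, Propositions~\ref{prop: kernel(J) of cat-faces} and~\ref{prop: Ker(J) of flat-faces} guarantee that no $L_*^2$ Jacobi field on any face vanishes along $\Gamma$, so $\mathcal K_1=\mathcal K_2=\mathcal K_3=\{0\}$, whence $\mathcal Z=\{0\}$ and $Ind(\mathcal Z)=0$. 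Rotational symmetry identifies the common Steklov eigenfunctions with the Fourier modes $1,\ \cos n\theta,\ \sin n\theta$, which verifies the second assumption, and $h_i:=\mathbf H_{\mathbf F(\Gamma)}\cdot\nu_i$ is constant on each face, so hypothesis~(\ref{eqn: m.c.v times conormal}) holds. The index formula therefore reduces to $Ind_J(M)=\sum_{i=1}^{3} Ind^0_{J_i}(M_i)+\sum_\alpha Ind_S(g^\alpha)$.

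Next I would show the three interior indices vanish. On $\Sigma_1$ one has $J_1=\Delta$, so the Dirichlet form $\int_{\mathbb D_{R_0}}|\nabla\phi|^2$ is positive definite and $Ind^0_{J_1}=0$. On a catenoidal end, separation of variables and conformal invariance of the Dirichlet energy reduce $Q$ restricted to functions vanishing on $\partial\Sigma_i$ to the one-dimensional forms $\int_0^\infty\big((f')^2+n^2f^2-\tfrac{2}{\cosh^2(t+T)}f^2\big)\,dt$ with $f(0)=0$; it is enough to treat $n=0$, as larger $n$ only adds the repulsive term $n^2f^2$. Here I would use Sturm oscillation theory: the number of negative Dirichlet eigenvalues equals the number of zeros in $(0,\infty)$ of the zero-energy solution meeting $f(0)=0$. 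Using the explicit zero-energy solutions $\tanh(t+T)$ and $1-(t+T)\tanh(t+T)$ from Proposition~\ref{prop: kernel(J) of cat-faces}, the solution satisfying $f(0)=0$ is seen to be strictly monotone on $(0,\infty)$, hence has no interior zero; thus $Ind^0_{J_2}=Ind^0_{J_3}=0$ and the whole interior contribution is $0$.

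Finally I would evaluate the Steklov contribution mode by mode. A direct computation gives $h_1=-1/R_0$ and $h_2=h_3=\tanh T/R_0=1/(2R_0)$, and from Propositions~\ref{prop: kernel(J) of cat-faces} and~\ref{prop: Ker(J) of flat-faces} the catenoid Steklov eigenvalues combine into the uniform expression $\delta^{(n)}=\tfrac{4n^2+2n-3}{2R_0(2n+1)}$, while the disk gives $\delta^{(n)}=n/R_0$. Hence on each catenoid face $\delta^{(n)}-h=\tfrac{2(n^2-1)}{R_0(2n+1)}$ and on the disk $\delta^{(n)}-h_1=(n+1)/R_0$, so that $Q_g[\mathbf c]=\big(\int_\Gamma g^2\big)\sum_i(\delta_i^{(n)}-h_i)c_i^2$ on the plane $c_1+c_2+c_3=0$ is, up to the positive factor $\int_\Gamma g^2$, congruent to $-\tfrac1{R_0}(c_2-c_3)^2$ when $n=0$ (index $1$, nullity $1$), to $\tfrac{2}{R_0}c_1^2$ when $n=1$ (index $0$, nullity $1$ for each of $\cos\theta$ and $\sin\theta$), and positive definite when $n\ge2$ (index $0$, nullity $0$, since all three coefficients are then positive). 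Summing, $\sum_\alpha Ind_S(g^\alpha)=1$ and the total nullity equals $1+2=3$, so $Ind_J(M)=0+1=1$ with nullity $3$; one checks the three null directions are the normal parts of the three translations of $\mathbb R^3$ (the two horizontal ones giving the $n=1$ modes, the vertical one the $n=0$ mode). The main obstacle is the interior step: verifying through the oscillation argument and the monotonicity of the zero-energy Dirichlet solution that the catenoidal ends carry no negative Dirichlet eigenvalue, together with confirming the sign of $\delta^{(n)}-h=\tfrac{2(n^2-1)}{R_0(2n+1)}$ that renders every $n\ge2$ mode positive definite.
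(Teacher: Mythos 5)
Your proposal is correct and follows essentially the same route as the paper: reduce via the decomposition of Corollary \ref{coro: general index decom} (with $\mathcal{K}=\{0\}$ since $T=\ln\sqrt{3}\neq 0$), check that the Dirichlet indices of the three faces vanish, and then evaluate the Steklov quadratic form $Q_g[\mathbf{c}]$ mode by mode using the eigenfunctions and eigenvalues of Propositions \ref{prop: kernel(J) of cat-faces} and \ref{prop: Ker(J) of flat-faces}, finding index $1$ from the $n=0$ mode and nullity $1+2=3$ from the $n=0$ and $n=1$ modes. Your uniform formula $\delta^{(n)}=\tfrac{4n^2+2n-3}{2R_0(2n+1)}$ and the Sturm oscillation argument for the stability of the catenoidal ends merely make explicit steps the paper asserts directly, and your numerical values ($h_1=-\sqrt{3}/2$, $h_2=h_3=\sqrt{3}/4$, $\delta^{(0)}-h=-2/R_0$ on the catenoidal faces) agree with the paper's.
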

\begin{proof}
    Consider $M = (\Sigma_1, \Sigma_2 , \Sigma_3 ; \Gamma )$ as the Y-catenoid. Assume that the faces $\Sigma_i$s correspond to the following mappings:
\medskip

$\Sigma_2 , \Sigma_3 $ are the graphs of  
$$
X_2,X_3 (t,\theta ) = \big( \cosh  (t+ \ln \sqrt{3} ) \cos \theta ,  \cosh   (t + \ln \sqrt{3} ) \sin \theta ,    \pm t\big)   ; \hspace{0.5cm}  t\in [0, \infty )  
$$

and $\Sigma_1 $ is the graph of 
$$
 X_1 (t, \theta ) = \big( \frac{2t}{\sqrt{3}} \cos \theta , \frac{2t}{\sqrt{3}} \sin \theta , 0 \big)   ;   \hspace{0.5cm} t\in [0, 1]  
$$
It is easy to check that all three faces are stable under variations fixing the interface $\Gamma$, hence as a result $\underset{i=1}{\overset{3}{\sum}}Ind_{J_i}^0(\Sigma_i) =0$. According to Corollary \ref{coro: general index decom}, determining the index of $Q$ requires only examining the kernel of the Jacobi operator in the $L_*^2(\Sigma)$-weighted space. Let $\eta_i$ be the unit conormal vector to $\Sigma_i$ along $\partial \Sigma_i$, and $H_{\partial \Sigma_i}$  denote the mean curvature vector of the curve $\partial \Sigma_i$ identified with $\Gamma$. We compute
$$
\langle H_{\partial \Sigma_i}, \eta_i \rangle = -\frac{\sqrt{3}}{2}, \frac{\sqrt{3}}{4},  \frac{\sqrt{3}}{4},
$$
for $i=1,2,3$, repectively. According to Propositions \ref{prop: kernel(J) of cat-faces} and \ref{prop: Ker(J) of flat-faces}, we can deduce by verifying the index form on the space $Ker(J_i(\Sigma_i))\cap L_*^2(\Sigma_i)$. Consequently, the corresponding Steklov eigenfunctions are outlined below:
\medskip

$\bullet$ The function $ \tanh (\ln \sqrt{3})$ serves as a Steklov eigenfunction with the associated eigenvalue  $\delta = \frac{-3\sqrt 3}{4}$, where the extension on the faces $\Sigma_2$ and $\Sigma_3$ is given by $u = \tanh (t+\ln \sqrt{3})$. And,  $ \tanh (\ln \sqrt{3})$ is a Steklov eigenfunction with the eigenvalue $0$ when the extension is $u= \tanh (\ln \sqrt{3})$ on the flat face $\Sigma_1$. 

Consider the space of linear combinations of extensions of $\tanh (\ln \sqrt{3})$ on the faces $\Sigma_i$, say $L = Span \{ u_1 , u_2 ,u_3 \} $, where $u_i = a_i \tanh (t+\ln \sqrt{3})  ,\ \text{for}\ i = 2, 3 $, and $u_1=a_1\tanh (\ln \sqrt{3})$, when $a_i \in \mathbb{R}\ \text{for}\ i=1,2,3$.
Then, for $u_2$ (and similarly for $u_3$)
$$
\delta - \langle H_{\partial \Sigma_2} , \eta_2 \rangle = \frac{-3\sqrt 3}{4} - \frac{\sqrt{3}}{4}   =  -\sqrt 3  ,
$$
we compute
\begin{equation}
\nonumber
\begin{split}
Q(u, u) & =  \sum_{i=1}^{3} [\int_{\Sigma_i} \vert \nabla u_i\vert ^2 - \vert A_i \vert ^2 u_i^2  - \int_{\Gamma}  \langle H_{\partial \Sigma_i} , \eta_i  \rangle   u^2 ] \\
& = \int_{\Gamma}  (\frac{\sqrt{3}}{2} ) C_1^2 +  \int_{\Gamma } -\sqrt3 (C_2^2  +  C_3^2)\\
&= \int_{\Gamma } -\frac{\sqrt{3}}{2} (C_2  -  C_3)^2
\end{split}
\end{equation}
where   $u_1\vert_{\Gamma} = C_1, \  u_2\vert_{\Gamma} = C_2, \   u_3\vert_{\Gamma} = C_3$ such that  $C_1 + C_2 + C_3 = 0$.
\medskip

The space $L$ can be decomposed linearly as $L = V_1 \bigoplus V_2 $, with respect to the bilinear index form $Q$, so that $Q\vert_{V_1} = 0$ and $Q\vert_{V_2} < 0 $:

\medskip

Let $V_1 $ be a subset of $L$, selected such that the constants $a_i$s are chosen to satisfy $C_2 = C_3$ and $C_1=-2C_2$, resulting in $Q(u,u) = 0$. We can check that $V_1$  constitutes a linear one-dimensional subspace of $L$. Consequently, $V_1$ forms a one-dimensional subspace within the null space of the Jacobi $J$-operator, intersecting with the null space of the index form.
Alternatively, another subset  $V_2 $ of $L$ can be selected by choosing $a_i$s in such a way that $C_2$ is not equal to $C_3$ to ensure  $Q(u, u) < 0$. We check that $V_2$ is a linear one-dimensional subspace of $L$.
Consequently, $V_2$ constitutes a one-dimensional space on which the index form is negative. Notably, the spaces $V_1$ and $V_2$ are orthogonal with respect to the index form. \\
Therefore, $L = V_1 \bigoplus V_2 $, where the direct sum is taken with respect to the index form $Q$. This ensures that $V_1 $ is a one-dimensional subspace of the null space, and $V_2$ is a one-dimensional subspace of the index space.
\medskip

$\bullet$ The function $(c_1\cos \theta  + c_2\sin \theta  )\frac{1}{\cosh (\ln \sqrt{3})} $ extends smoothly to $$u_i = (c_1\cos \theta  + c_2\sin \theta  )\frac{1}{\cosh (t+\ln \sqrt{3})} $$ on the faces $\Sigma_i$, for $i=2,3$, and it extends to
$$u_1 = (c_1\cos \theta  + c_2\sin \theta  )\frac{t}{\cosh (\ln \sqrt{3})} $$
on the flat face $\Sigma_1$.
Moreover, it is a Steklov eigenfunction associated with eigenvalues $\delta =\frac{\sqrt{3}}{4}, \frac{\sqrt{3}}{4}$, and $\frac{\sqrt{3}}{2}$ concerning the faces $\Sigma_2, \Sigma_3$, and $\Sigma_1$, respectively. We verify on the face $\Sigma_2$ (similarly on $\Sigma_3$)
$$
\delta - \langle H_{\partial \Sigma_2} , \eta_2 \rangle = \frac{\sqrt{3}}{4} - \frac{\sqrt{3}}{4}   =  0 ,
$$
therefore,  
\begin{equation}
\nonumber
\begin{split}
Q(u, u) & =  \sum_{i=1}^{3} [\int_{\Sigma_i} \vert \nabla u_i\vert ^2 - \vert A_i \vert ^2 u_i^2  - \int_{\Gamma}  \langle H_{\Gamma} , \eta_i  \rangle   u_i^2 ]  \\
& =   \int_{\Gamma} \big( ( \frac{\sqrt{3}}{2} +\frac{\sqrt{3}}{2} ) u_1^2 + (\frac{\sqrt 3}{4} -  \frac{\sqrt{3}}{4} )(u_2^2 + u_3^2)\big) \geq 0.
\end{split}
\end{equation} 
We can verify that for certain linear combinations of $u$ the index form is equal to zero. For instance, consider $u_2\vert_{\Gamma} =-u_3\vert_{\Gamma}$, and $u_1\vert_{\Gamma} = 0$. Consequently, the functions $u_1, u_2$ and $u_3$ satisfy compatibility condition on $\Gamma$, and $Q(u, u)=\sum_{i=1}^{3} Q(u_i,u_i) = 0$. It is noteworthy that the space of linear combinations of $u$ in the null space of $Q$ is two-dimensional. 
\medskip

$\bullet$ The functions $ f = (c_1\cos (n\theta )  + c_2\sin (n\theta ) )  (n + \tanh (\ln \sqrt{3}))e^{-n(\ln \sqrt{3})} $, for $n\geq 2$, also serve as Steklov eigenfunctions on $\Gamma$. The associated eigenvalues on the catenoidal faces are $\delta > \frac{\sqrt{3}}{4}$. It is straightforward to verify that the index form is positive definite on all linear combinations of extensions of $f$ on $M$. Therefore, these extensions neither are in the null space nor in the index space of the index form. \\
Finally, our computations show that the index space of the index form $Q$ is a one-dimensional subspace of the space spanned by  $\tanh(t+\ln \sqrt{3})$, and the null space of the index form is a three-dimensional space which consists of a one-dimensional subspace of the space spanned by $\tanh(t+\ln \sqrt{3})$ and a two-dimensional subspace of the space spanned by $(c_1\cos \theta  + c_2\sin \theta  )\frac{1}{\cosh (t+\ln \sqrt{3})}$. 
\end{proof}

\subsection{Index Form on the Example of Pseudo $Y$-Catenoid} \label{sec: index of pseudo y-cat}
The only example in the family of $Y$-noids featuring a planar face is $Y_{\frac{\pi}{3}} =(\Sigma_1, \Sigma_2 , \Sigma_3 ; \Gamma )$, 
 see the definition \ref{def: pseudo y-catenoid}. 
Here, we provide an explicit definition through the equations of the maps $X_i$s for the corresponding faces.
\medskip

Let $\Sigma_1 , \Sigma_3 $ be the surfaces described  by
\begin{equation}
\nonumber
X_1,X_3 (t,\theta ) = \big( \cosh  (t  - \ln \sqrt{3} ) \cos \theta ,  \cosh   (t - \ln \sqrt{3} ) \sin \theta ,    \pm t \big)   ; \hspace{0.5cm}  t\in [0, \infty )  
\end{equation}
 
and $\Sigma_2 $ the graph of 
\begin{equation}
\nonumber
 X_2 (t, \theta ) = \big( (t + \frac{2}{\sqrt{3}}) \cos \theta , (t + \frac{2}{\sqrt{3}}) \sin \theta , 0 \big)   ;  \hspace{0.5cm}  \  t\in [0, \infty )  \hspace{3cm}
\end{equation}

And, $\Sigma = (\Sigma_1, \Sigma_2, \Sigma_3 , \Gamma )$ represents the corresponding $Y$-noid, termed as the pseudo $Y$-catenoid.
\begin{thm}
    The Morse index of the pseudo $Y$-catenoid is two, and the nullity is five.
\end{thm}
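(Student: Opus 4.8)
The plan is to compute both numbers by the same two–step decomposition used for the $Y$-catenoid, i.e.\ Corollary \ref{coro: general index decom}, but the geometry here forces two genuinely new features: the catenoidal faces $\Sigma_1,\Sigma_3$ now contain the waist of the catenoid (the shift is $T=-\ln\sqrt3<0$), and the flat face $\Sigma_2$ is the punctured plane $\mathbb{D}_{R_0}^c$ rather than a disk. First I would record the boundary data. With $R_0=\tfrac{2}{\sqrt3}$ the mean curvature vector of $\Gamma$ is $H_{\partial\Sigma}=-\tfrac{1}{R_0}\hat r=-\tfrac{\sqrt3}{2}\hat r$, and computing the unit conormals subject to $\eta_1+\eta_2+\eta_3=0$ gives $\eta_1=(\tfrac12\cos\theta,\tfrac12\sin\theta,-\tfrac{\sqrt3}{2})$, $\eta_2=-\hat r$, $\eta_3=(\tfrac12\cos\theta,\tfrac12\sin\theta,\tfrac{\sqrt3}{2})$, so that $\langle H_{\partial\Sigma_i},\eta_i\rangle=(-\tfrac{\sqrt3}{4},\tfrac{\sqrt3}{2},-\tfrac{\sqrt3}{4})$ for $i=1,2,3$. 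These are constant along $\Gamma$, so (\ref{eqn: m.c.v times conormal}) holds and Corollary \ref{coro: general index decom} applies.

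The step I expect to be the main obstacle, and the feature distinguishing this case from the $Y$-catenoid, is the interface-fixing term $\sum_{i=1}^3 Ind_{J_i}^0(\Sigma_i)$, which here is nonzero. For the $Y$-catenoid one had $T=+\ln\sqrt3>0$, so each catenoidal face was a stable graph; now $T=-\ln\sqrt3<0$, so $t\in[0,\infty)$ crosses the waist $t=\ln\sqrt3$ and stability may fail. I would separate variables as in the proof of Proposition \ref{prop: kernel(J) of cat-faces} and reduce the Dirichlet stability of a catenoidal face to the one-dimensional operators $L_n=-\partial_s^2-\tfrac{2}{\cosh^2 s}+n^2$ on $[T,\infty)$ with a Dirichlet condition at $s=T$, where $s=t+T$. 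For $n=0$ the zero-energy solution $\tanh s$ (the generator of $W_0$) changes sign at the interior point $s=0\in(T,\infty)$, which by Sturm oscillation forces exactly one negative Dirichlet eigenvalue (at most one, since the full-line operator $-\partial_s^2-\tfrac{2}{\cosh^2 s}$ has a single bound state); for $n\ge 1$ the positive solution $\tfrac{1}{\cosh s}$ of $L_1u=0$ certifies $L_n\ge L_1\ge 0$, so there are no negative eigenvalues. Hence each catenoidal face contributes $Ind_{J_i}^0=1$, while the flat face has $J_2=\Delta\ge 0$ and contributes $0$, giving $\sum_{i=1}^3 Ind_{J_i}^0=2$. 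The same ODE analysis shows no zero Dirichlet eigenvalue occurs (as $T\ne 0$), and inspecting the explicit $L_*^2$-Jacobi fields of Propositions \ref{prop: kernel(J) of cat-faces} and \ref{prop: Ker(J) of flat-faces} shows none of them vanishes at $t=0$; thus $\mathcal K=\{0\}$, $Ind(\mathcal Z)=0$, and the faces contribute nothing to the nullity.

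It then remains to evaluate the Steklov term mode by mode via the eigenvalues $\delta_i$ from Propositions \ref{prop: kernel(J) of cat-faces} and \ref{prop: Ker(J) of flat-faces} and the form $Q_g[\mathbf c]=\sum_i(\delta_i-\langle H_{\partial\Sigma_i},\eta_i\rangle)c_i^2$ on $\{c_1+c_2+c_3=0\}$. For $n=0$ one gets $\delta_i-\langle H_{\partial\Sigma_i},\eta_i\rangle=(\sqrt3,-\tfrac{\sqrt3}{2},\sqrt3)$, and on the constraint $Q_g[\mathbf c]=\tfrac{\sqrt3}{2}(c_1-c_3)^2\ge 0$, contributing index $0$ and nullity $1$. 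For $n=1$ the eigenvalues are $\delta_i=(-\tfrac{\sqrt3}{4},\tfrac{\sqrt3}{2},-\tfrac{\sqrt3}{4})$, so $\delta_i-\langle H_{\partial\Sigma_i},\eta_i\rangle=(0,0,0)$ and $Q_g\equiv 0$ on the whole two-dimensional constraint space; since $n=1$ carries both $\cos\theta$ and $\sin\theta$, this contributes index $0$ and nullity $4$. For $n\ge 2$ the catenoidal eigenvalues (e.g.\ $\delta_2=\tfrac{3\sqrt3}{4}$) and the planar eigenvalues $\tfrac{\sqrt3}{2}n$ make all $\delta_i-\langle H_{\partial\Sigma_i},\eta_i\rangle>0$, so $Q_g$ is positive definite and contributes nothing. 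Summing, the Steklov term gives index $0$ and nullity $5$.

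Combining through Corollary \ref{coro: general index decom} yields $Ind_J(M)=\sum_{i=1}^3 Ind_{J_i}^0+\sum_{\dot\alpha}Ind_S(g^{\dot\alpha})=2+0=2$ and nullity $0+5=5$, as claimed. I expect the delicate points to be exactly the two degeneracies: in the face analysis one must confirm, in the weighted space $W^{1,2}\cap L_*^2$, that the unstable $n=0$ direction is admissible (it decays exponentially, since its eigenvalue is negative) while no further negative or null Dirichlet modes are hidden at the end $t\to\infty$; and in the Steklov analysis one must check that the identically degenerate $n=1$ block produces genuine null directions of the full index form $Q$ (via $JE(\mathbf f)=0$ from Lemma \ref{lem: JE(f)=0} and the vanishing of the boundary term) rather than merely of the reduced form $Q_g$.
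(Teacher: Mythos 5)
Your proposal is correct and follows essentially the same route as the paper: the decomposition of Corollary \ref{coro: general index decom}, the contribution $\sum_i Ind_{J_i}^0(\Sigma_i)=2$ from the two waist-containing catenoidal faces, and the mode-by-mode Steklov computation giving $\delta_i-\langle H_{\partial\Sigma_i},\eta_i\rangle=(\sqrt3,-\tfrac{\sqrt3}{2},\sqrt3)$ for $n=0$ (nullity $1$), identically $0$ for $n=1$ (nullity $4$), and positive for $n\ge 2$. Your Sturm-oscillation justification of the Dirichlet index of each catenoidal face is more explicit than the paper's bare assertion that each such face carries exactly one bound state, but the argument is the same in substance and all your numerical values agree with the paper's.
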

\begin{proof}
    According to Corollary \ref{coro: general index decom}, we are required to determine $\underset{i=1}{\overset{3}{\sum}} Ind_{J_i}^0(\Sigma_i)$ and then examine the kernel spaces of the operators $J_i$s. It's noteworthy that $\Sigma_1$ and $\Sigma_3$ exhibit symmetric surfaces through reflection across the horizontal plane, and moreover, the Jacobi operator has only one bounded state on each of these surfaces. However, $\Sigma_2$ is a flat surface, leading to the Jacobi operator having no bounded states on it. As a result, $\Sigma$ precisely has two bounded states. This immediate implication yields $\underset{i=1}{\overset{3}{\sum}}{Ind_{J_i}^0(\Sigma_i)}$=2.
  
  Consider $\eta_i$ as the unit conormal vector to $\Sigma_i$ along $\partial \Sigma_i $, and $H_{\partial \Sigma_i}$ as the mean curvature vector of the curve $\partial \Sigma_i$, identified with $\Gamma$. Then 
  $$\langle H_{\partial \Sigma_i} , \eta_i  \rangle =  -\frac{\sqrt{3}}{4} ,  \frac{\sqrt{3}}{2} , -\frac{\sqrt{3}}{4}\  \  \text{for}\ i = 1, 2, 3,$$ respectively.  
 By checking on the spaces $Ker(J_i(\Sigma_i))\cap L_*^2(\Sigma_i)$:
\medskip

$\bullet $
The function $-\tanh (\ln \sqrt{3})$ serves as a Steklov eigenfunction associated with the eigenvalues $\delta  = \frac{3\sqrt 3}{4}$, where the extension on the faces $\Sigma_1, \Sigma_3$ is given by $\tanh (t-\ln \sqrt{3})$. And, $\tanh (\ln \sqrt{3})$ is a Steklov eigenfunction with the eigenvalue $0$ when the extension on the face $\Sigma_2$ is given by $\tanh (\ln \sqrt{3})$.

Consider the space of linear combinations of its extensions on $\Sigma$, say $L = Span \{ u_1 ,u_2 ,u_3  \} $,  where $u_i = a_i \tanh (t-\ln \sqrt{3})$ for $i=1,3$, and $u_2= a_2\tanh(\ln \sqrt{3})$, and $a_i \in \mathbb{R},\ \text{for}\ i=1,2,3$. Then, for $u_1$ (and similarly for $u_3$)
$$
\delta - \langle H_{\partial \Sigma_1} , \eta_1 \rangle = \frac{3\sqrt{3}}{4} + \frac{\sqrt{3}}{4}   = \sqrt{3} ,
$$
and
\begin{equation}
\nonumber
\begin{split}
Q(u, u) &=  \sum_{i=1}^{3} [\int_{\Sigma_i} \vert \nabla u_i\vert ^2 - \vert A_i \vert ^2 u_i^2  - \int_{\partial \Sigma_i }  \langle H_{\partial \Sigma_i} , \eta_i  \rangle   u_i^2 ] \\
&=   \int_{\Gamma } \big(  (\frac{-\sqrt{3}}{2} ) C_2^2 +  \sqrt 3 C_1^2  +   \sqrt 3 C_3^2 \big) \\
&=\int_{\Gamma }  \frac{\sqrt{3}}{2} (  C_1  -    C_3 )^2 
\end{split}
\end{equation}

where  $u_1\vert_{\Gamma} = C_1,  u_2\vert_{\Gamma} = C_2,  u_3\vert_{\Gamma} = C_3$, such that  $C_1 + C_2 + C_3 = 0$.\\
The space $L$ can be linearly decomposed as $L = V_1 \bigoplus V_2 $ with respect to the index form $Q$, such that $Q\vert_{V_1} = 0$ and $Q\vert_{V_2} > 0 $ as follows:\\
We can arrange $a_i$s to determine values for $C_i$s to satisfy $Q(u,u) = 0$. This yields the one-dimensional space $V_1  \subset L$ within the null space of the index form. Alternatively, we can define $V_2  \subset L$ by choosing $a_i$ values to ensure that $C_1$ is not equal to $C_3$ and ensuring $Q(u, u) > 0$. Clearly, $V_2$ is also a one-dimensional subspace of $L$. The orthogonality of $V_1$ and $V_2$ with respect to the index form is evident. Furthermore, no linear combinations of $u_i$s can yield $Q<0$.\\
Hence, one can express $L = V_1 \bigoplus V_2 $, where the direct sum is taken with respect to the index form. Here, $V_1 $ is a one-dimensional subspace within the null space of the index form, and $V_2$ is a one-dimensional subspace on which the index form is positive.
\medskip

$\bullet $ 
The function $f=(c_1\cos \theta  + c_2\sin \theta  )\frac{1}{\cosh (-\ln \sqrt{3})} $ extends smoothly to
$$
u_i = (c_1\cos \theta  + c_2\sin \theta  )\frac{1}{\cosh (t-\ln \sqrt{3})} 
$$
on the faces $\Sigma_i$, for $i=1,3$, and it extends to 
$$
u_2 = (c_1\cos \theta  + c_2\sin \theta  )\frac{1}{t\cosh (-\ln \sqrt{3})}
$$
on the planar face $\Sigma_2$. Moreover, it serves as a Steklov eigenfunction associated with eigenvalues $\delta  = \frac{-\sqrt 3}{4}, \frac{-\sqrt 3}{4}, \frac{\sqrt{3}}{2}$ concerning the faces $\Sigma_1, \Sigma_3, \Sigma_2$, respectively. We verify on the face $\Sigma_1$ (similarly on $\Sigma_3$)
$$
\delta - \langle H_{\partial \Sigma_1} , \eta_1 \rangle = \frac{-\sqrt{3}}{4} + \frac{\sqrt{3}}{4}   
$$
thus,  
\begin{equation}
\nonumber
\begin{split}
Q(u, u) & =  \sum_{i=1}^{3} [\int_{\Sigma_i} \vert \nabla u_i\vert ^2 - \vert A_i \vert ^2 u_i^2  - \int_{\Gamma}  \langle H_{\Gamma} , \eta_i  \rangle   u_i^2 ]  \\
& =  \int_{\Gamma}( \frac{\sqrt{3}}{2} + \frac{-\sqrt{3}}{2} ) u_2^2 + (\frac{-\sqrt 3}{4} +  \frac{\sqrt{3}}{4} ) (u_1^2 + u_3^2) =0
\end{split}
\end{equation} 

It is easy to check that some linear combinations of $u$ make the index form be equal to zero. For instance, select combinations such that $u_3\vert_{\Gamma} = -u_1\vert_{\Gamma}$ and $u_2\vert_{\Gamma} = 0$. Subsequently, the functions $u_1, u_2$ and $u_3$ satisfy compatible condition on $\Gamma$ and $Q(u, u)=\sum_{i=1}^{3} Q(u_i,u_i) = 0$. Now, contemplate the space of linear combinations of extensions of the function $f$ on $\Sigma$, denoted as $W=Span \{ u_1,u_2,u_3 \}$, where $u_i = (c_1\cos \theta  + c_2\sin \theta  )\frac{1}{\cosh (t-\ln \sqrt{3})} $, for $i=1,3$ and $u_2 = (c_1\cos \theta  + c_2\sin \theta  )\frac{1}{t\cosh (-\ln \sqrt{3})}$. The space of linear combinations of $u_i$ is  two-dimensional for each $i=1,2,3$, and $W$ is a four dimensional space due to the compatibility condition on $u_i$s. The space $W$ contributes a four-dimensional subspace to the null space of $Q$.
\medskip

$\bullet $  
The functions $ f = (c_1\cos (n\theta )  + c_2\sin (n\theta ) )  (n + \tanh (-\ln \sqrt{3}))e^{-n(-\ln \sqrt{3})} $, for $n\geq 2$, also serve as Steklov eigenfunctions on $\Gamma$, which expand smoothly on all three faces. The associated eigenvalues on all three faces are positive numbers, with $\delta > \frac{\sqrt{3}}{2}$ on the planar face $\Sigma_2$. Consequently, it becomes evident that the index form is positive definite on all linear combinations of expansions of $f$ on $\Sigma$.
\medskip

Ultimately, the kernel of $Q$ consists of a one-dimensional subspace of the space $L$ comprising linear combinations of $\tanh (t-\ln \sqrt{3})$ and $\tanh (\ln \sqrt{3})$, and a four-dimensional subspace of $W$ comprising linear combinations of $(c_1\cos \theta  + c_2\sin \theta  )\frac{1}{\cosh (t-\ln \sqrt{3})}$ and $ (c_1\cos \theta  + c_2\sin \theta  )\frac{1}{t\cosh (-\ln \sqrt{3})}$. These two spaces are orthogonal with respect to the index form,
thereby resulting in a nullity of five. However, our computations indicate that the index is two.  
\end{proof}


\subsection{Evaluating the Index of Other $Y$-Noids Family Members} \label{sec: index of rest of family}
Suppose that $M = (\Sigma_1, \Sigma_2, \Sigma_3  ; \Gamma )$ is a member of the $Y$-noid family $\{ Y_{\alpha} \} $ other than the surfaces $Y_{\frac{\pi}{3}}$ and $Y_0$. It is noteworthy that $M$ is a triple junction surface comprising three non-planar catenoidal faces, as illustrated in Figure 3.
\medskip

$M$ can be characterized by three maps, $X_i$s, such that the corresponding graphs are interconnected along a shared boundary situated in the horizontal plane, $\{ x_3 = 0 \}$, as follows:
\begin{equation}
\label{eqn: y-noids maps}
\nonumber
X_i (t,\theta ) = c_i \big( \cosh  (t  + T_i ) \cos \theta ,  \cosh   (t + T_i ) \sin \theta ,    \pm t \big)   ; \hspace{0.5cm}  t\in [0, \infty ),  
\end{equation}
for $i = 1, 2, 3$. 
\medskip

The graph of the map $X_i$ corresponds to the face $\Sigma_i$, which is connected to the other two faces along $\Gamma $. Since $\Gamma$ forms a single circle in the plane $\{ x_3 = 0 \} $, each surface $\Sigma_i $ can either be a graphical surface or a non-graphical surface when projected onto the horizontal plane $\{ x_3 = 0 \} $. \\
However, if all three surfaces are graphical over the plane $\{ x_3 = 0\} $, it is impossible for their outward conormal vectors to form a $Y$ along $\Gamma $. A similar argument applies if all three surfaces are non-graphical over the plane $\{ x_3 = 0\} $. 
\medskip

We can verify that $\alpha \in [0,\frac{\pi}{3}]$ encompasses all members in $Y_{\alpha}$, accounting for some reflections with respect to the plane $\{x_3=0\}$. Specifically, $\alpha =0$ represents the $Y$-catenoid and $\alpha =\frac{\pi}{3}$ yields the pseudo $Y$-catenoid surface.  Additionally, for $\alpha \in (0,\frac{\pi}{6}),\ M$ comprises one non-graphical face and two graphical faces. On the other hand, for $\alpha \in (\frac{\pi}{6}, \frac{\pi}{3})$ it comprises two non-graphical faces and one graphical face. Finally, $\alpha = \frac{\pi}{6}$  results in a surface with one non-graphical face, serving as a pivotal point that divides the family.\\
Accordingly, we partition the  family $\{ Y_{\alpha} \ \backslash \ (Y_{\frac{\pi}{3}} \cup Y_0) \} $ into two distinct subfamilies $\{ Y_{\alpha} \}_{\alpha \in (0,\frac{\pi}{6}) } $ and $\{ Y_{\alpha} \}_{\alpha \in (\frac{\pi}{6}, \frac{\pi}{3})} $. The examination for the index of the surface $Y_{\frac{\pi}{6}}$ will be conducted later at the end of this section. 
\medskip

\begin{figure} \label{fig: Y-noid}
     \centering
    \includegraphics[width=6cm, height=6cm]{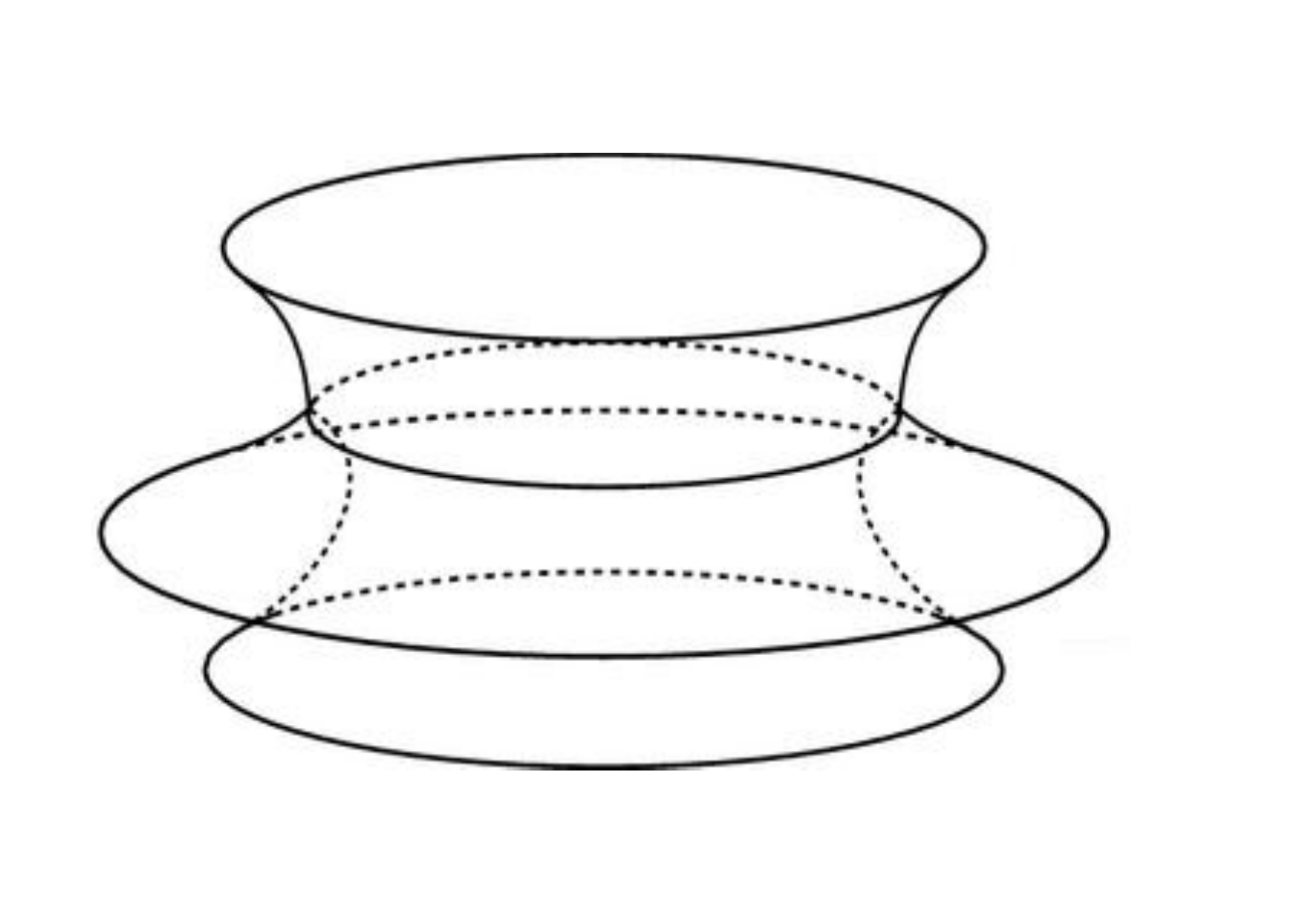}
     \caption{An example of $Y$-noids family}
\end{figure}

\begin{rem} \label{rem: nom. bnd.states of other members}
 \rm    
 For all non-graphical faces $\Sigma_i $s the index of the stability operator with Dirichlet condition at the boundary component, $Ind_{J_i}^0(\Sigma_i)$, is one, while for the graphical faces the index of the stability operator with Dirichlet condition at the boundary component is zero.  Consequently, if $Y_{\alpha}$ is a member of the subfamily $\{ Y_{\alpha} \}_{\alpha \in (0,\frac{\pi}{6})} $, the index is at least one. On the other hand, if $Y_{\alpha}$ belongs to the subfamily $\{ Y_{\alpha} \}_{\alpha \in (\frac{\pi}{6}, \frac{\pi}{3})} $ the index is at least two.
\end{rem}

By Corollary \ref{coro: general index decom}, we are required to determine $\underset{i=1}{\overset{3}{\sum}}Ind_{J_i}^0(\Sigma_i)$, and subsequently examine the kernel spaces of the operators $J_i$s.
 According to Remark \ref{rem: nom. bnd.states of other members}, $\underset{i=1}{\overset{3}{\sum}}Ind_{J_i}^0(\Sigma_i)=1$  for the first subfamily, and $\underset{i=1}{\overset{3}{\sum}}Ind_{J_i}^0(\Sigma_i)=2$ for the second. \\
 In the following subsections we compute $\underset{\dot{\alpha}}{{\sum}}Ind_S(g^{\dot{\alpha}})$ and $Ind(\mathcal{Z})$ for the two subfamilies.
 Upon scrutinizing the kernel of the $J$-operator, we will discern that $\mathcal{K}$ is trivial for all instances within the $Y_{\alpha}$-family and so $\mathcal{K}=\{ 0\}$, except for $Y_{\frac{\pi}{6}}$ where  $\mathcal{K}\neq \{ 0\}$. Thus, we may encounter $\mathcal{Z}$ solely for $Y_{\frac{\pi}{6}}$. 
\medskip

In the rest of this writing we fix the following notation: \\
Consider $M =(\Sigma_1, \Sigma_2, \Sigma_3 ; \Gamma )  \in \{ Y_{\alpha} \}_{\alpha \in (0,\frac{\pi}{3}) } $. Let $\eta_i $ denote the outward conormal vector to $\Sigma_i$ along the curve $\partial \Sigma_i$, identified with $\Gamma$.  \\
Define $\alpha_1 =\alpha ,\ \alpha_2 = \alpha + \frac{2\pi}{3}$, and $\alpha_3 = \alpha - \frac{2\pi}{3}$, where $\alpha_i = \frac{\pi}{2}-\angle (\eta_i , e_3)$ with $\bold{e}_3 = (0,0,1)$. Introduce $\beta_i := \langle H_{\partial \Sigma_i}, \eta_i \rangle$. Verify that
$$
\beta_i =\frac{-\cos \alpha_i}{c_i\cosh T_i}
$$
Additional relations pertaining to the parameterizations of the faces, which may prove useful in subsequent analyses, include the following:
$$\cos \alpha_i = -\tanh T_i$$
Setting $t=0$ reveals that
$$
c_1 \cosh T_1 =c_2 \cosh T_2 =c_3 \cosh T_3
$$
define 
$$
c = c_i \cosh T_i
$$
where $c$ is always a positive number. We find that,
$$
\beta_i =\frac{-\cos \alpha_i}{c},\ \ \text{for}\ i=1,2,3
$$
Verify that
$$
\sin \alpha_i = \frac{1}{\cosh T_i},\ \  \text{when}\ i=1,2$$
and,
$$
\sin \alpha_3 = \frac{-1}{\cosh T_3}
$$
Hence,
$$
\cot \alpha_i = -\sinh T_i ,\ \ \text{when}\ i=1,2
$$
and,
$$
\cot \alpha_3 = \sinh T_3 
$$
\medskip

In the subsequent computations to evaluate the index form on each particular $Y$-surface,  $Y_{\alpha}$, for some variations associated with the function $u$ on $Y_{\alpha}$ we may obtain 
$$
Q(u,u)=\underset{i=1}{\overset{3}{\sum}} \int_{\partial \Sigma_i} a_i C_i^2,
$$
where $a_i$s are constant real numbers dependent on the surface $Y_{\alpha}$, and non-zero $C_i$s vary across real numbers satisfying $C_1+C_2+C_3 =0$. The following lemma assists us in determining the sign of the index form for potential non-zero variables $C_i$s. 

\begin{lem} \label{lem: sign of P(C1,C2,C3)}
   Assume $a_1, a_2, a_3 \in \mathbb{R}$ and consider the polynomial $P(C_1,C_2,C_3)=\underset{i=1}{\overset{3}{\sum}} a_i C_i^2$, where $C_1, C_2, C_3$ are real numbers satisfying $C_1 + C_2 + C_3 =0$. The sign of $P$ is determined by the following relations on $a_i$s: \\ 
    If $a_1 a_2 + a_1 a_3 + a_3 a_2 <0$ then $P$ takes on all different signs, negative, zero, and positive, on $\mathbb{R}$.\\
    If $a_1 a_2 + a_1 a_3 + a_3 a_2 =0$ and $a_1 + a_2 + 2a_3 <0$, then $P$ either equals zero or is negative.  \\
    If $a_1 a_2 + a_1 a_3 + a_3 a_2 =0$ and $a_1 + a_2 + 2a_3 >0$, then $P$ either equals zero or is positive.  \\
   If $a_1 a_2 + a_1 a_3 + a_3 a_2 >0$ and $a_1 + a_2 + 2a_3 <0$, then $P$ is negative.\\
 If $a_1 a_2 + a_1 a_3 + a_3 a_2 >0$ and $a_1 + a_2 + 2a_3 >0$, then $P$ is positive. \\
\end{lem}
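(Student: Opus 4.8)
The plan is to remove the linear constraint and turn $P$ into an unconstrained binary quadratic form, whose sign behavior is completely determined by the sign of its determinant and trace. First I would solve $C_1 + C_2 + C_3 = 0$ for $C_3 = -(C_1 + C_2)$ and substitute, obtaining
\[
P = (a_1 + a_3) C_1^2 + 2 a_3 C_1 C_2 + (a_2 + a_3) C_2^2 =: \widetilde{P}(C_1, C_2).
\]
Since the map $(C_1, C_2) \mapsto (C_1, C_2, -(C_1 + C_2))$ is a linear isomorphism of $\mathbb{R}^2$ onto the constraint plane $\{C_1 + C_2 + C_3 = 0\}$, the set of values attained by $P$ on that plane coincides with the set attained by $\widetilde{P}$ on all of $\mathbb{R}^2$. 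Hence it suffices to classify $\widetilde{P}$.

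Next I would record the symmetric matrix of $\widetilde{P}$, namely $\left(\begin{smallmatrix} a_1 + a_3 & a_3 \\ a_3 & a_2 + a_3 \end{smallmatrix}\right)$, and compute its determinant and trace:
\[
\Delta = (a_1 + a_3)(a_2 + a_3) - a_3^2 = a_1 a_2 + a_1 a_3 + a_2 a_3, \qquad \tau = a_1 + a_2 + 2 a_3.
\]
The crucial observation is that the two expressions appearing in the hypotheses of the lemma are exactly the determinant and the trace of this $2\times 2$ matrix.

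Finally I would invoke the standard classification of real binary quadratic forms via the eigenvalues $\lambda_1, \lambda_2$ of the symmetric matrix, using $\lambda_1 \lambda_2 = \Delta$ and $\lambda_1 + \lambda_2 = \tau$. When $\Delta < 0$ the eigenvalues have opposite signs, so $\widetilde{P}$ is indefinite and attains positive, negative, and (along its null lines) zero values. When $\Delta > 0$ the eigenvalues share a common sign, which equals the sign of $\tau$, so $\widetilde{P}$ is definite: positive if $\tau > 0$ and negative if $\tau < 0$. When $\Delta = 0$ one eigenvalue vanishes and the other equals $\tau$, so $\widetilde{P}$ is semidefinite with the sign of the nonzero eigenvalue; it therefore takes the value zero together with values of sign $\tau$. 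Reading these five cases against the stated hypotheses yields precisely the listed conclusions.

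I expect the only delicate point to be the bookkeeping in the degenerate case $\Delta = 0$: one must confirm that $\widetilde{P}$ genuinely attains zero on a one-dimensional subspace while being otherwise of a single sign, and note that the uncovered subcase $\tau = 0$ (the identically-zero form) does not occur for the geometric coefficients $a_i = \delta_i - h_i$ to which the lemma is applied. Everything else is the routine eigenvalue dichotomy.
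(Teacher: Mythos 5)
Your proposal is correct and follows essentially the same route as the paper: eliminate $C_3=-(C_1+C_2)$, identify the resulting binary quadratic form with the symmetric matrix $\left(\begin{smallmatrix} a_1+a_3 & a_3 \\ a_3 & a_2+a_3\end{smallmatrix}\right)$, and read off the sign behavior from its determinant $a_1a_2+a_1a_3+a_2a_3$ and trace $a_1+a_2+2a_3$ via the standard eigenvalue classification. Your added remark about the uncovered subcase $\Delta=0$, $\tau=0$ is a worthwhile observation that the paper does not make explicit.
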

\begin{proof}
   We can express the polynomial $P$ in terms of the variables $C_1$ and $C_2$, using the relation given as $C_3=-(C_1 + C_2)$:
   $$P(C_1,C_2)=(a_1+a_3)C_1^2 + (a_2+a_3)C_2^2 + 2a_3C_1C_2$$
   Introduce the matrix $A$ defined as: $$A:= \begin{bmatrix}
       a_1+a_3 & a_3 \\
       a_3 & a_2+a_3
   \end{bmatrix}$$ Observe that the polynomial $P$ can be expressed as $P(C_1,C_2)=\begin{bmatrix} 
   C_1 & C_2 \end{bmatrix} A \begin{bmatrix}
       C_1 \\
       C_2
   \end{bmatrix}$.\\
    To determine the sign of the polynomial $P$, we need to check on the sign of the eigenvalues, $\lambda_1,\ \lambda_2$, associated with the matrix $A$. Note that 
    $$
    d:= det(A)=\lambda_1 \lambda_2 = a_1a_2 + a_1 a_3 + a_2a_3
    $$
    while
    $$
    t:= trace(A) = \lambda_1 +\lambda_2 = a_1+a_2+2a_3
    $$
   The conclusion can be drawn by employing basic linear algebra to explore possible signs for $d$ and subsequently for $t$.
\end{proof}

\subsection{Assessment of the Index Form for Members in the Subfamily $\{ Y_{\alpha} \}_{\alpha \in (0,\frac{\pi}{6}) } $ } 

Consider $M =(\Sigma_1, \Sigma_2, \Sigma_3 ; \Gamma )  \in \{ Y_{\alpha} \}_{\alpha \in (0,\frac{\pi}{6}) } $ , where $\Sigma_1 $ is the non-graphical and $\Sigma_2 , \Sigma_3 $ are the graphical faces.  It can be observed that $\eta_2 , \eta_3 $ point inside  $\Gamma $, while $\eta_1 $ points outside $\Gamma $.  Hence,  $\beta_2 :=  \langle H_{\partial \Sigma_2} , \eta_2 \rangle $ and $\beta_3 := \langle H_{\partial \Sigma_3} , \eta_3 \rangle$ are positive numbers but $\beta_1 :=  \langle H_{\partial \Sigma_1} , \eta_1 \rangle $ is a negative number. \\
We assume $\beta_1  < \beta_3  < \beta_2$. The first inequality is trivial, and the second follows from the observation that equality holds only in the degenerate case of the $Y$-catenoid and $\beta_2$ is increasing function of $\alpha$ on $\alpha \in (0,\frac{\pi}{6})$. 

\begin{thm} \label{thm: index (0,pi/6)}
    The Morse index for all members of the subfamily $\{ Y_{\alpha} \}_{\alpha \in (0,\frac{\pi}{6}) } $ is two, and the nullity is five.
\end{thm}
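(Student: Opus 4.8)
The plan is to apply the general index decomposition of Corollary~\ref{coro: general index decom},
\begin{equation}
\nonumber
Ind_J(M) = \sum_{i=1}^3 Ind_{J_i}^0(\Sigma_i) + \sum_{\dot\alpha} Ind_S(g^{\dot\alpha}) + Ind(\mathcal{Z}),
\end{equation}
and to evaluate the three terms separately. The first is handled by Remark~\ref{rem: nom. bnd.states of other members}: the single non-graphical face $\Sigma_1$ carries one bounded state while the two graphical faces $\Sigma_2,\Sigma_3$ carry none, so $\sum_i Ind_{J_i}^0(\Sigma_i)=1$. For $\alpha\in(0,\tfrac{\pi}{6})$ the kernel $\mathcal{K}_i$ is trivial on each face (it is non-trivial only at $\alpha=\tfrac{\pi}{6}$), hence $\mathcal{K}=\{0\}$, $\mathcal{Z}=\{0\}$, and $Ind(\mathcal{Z})=0$; this also shows there is no nullity coming from the Dirichlet part. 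Everything thus reduces to the Steklov term, which I would evaluate mode by mode.

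Using Proposition~\ref{prop: kernel(J) of cat-faces} I would feed the Steklov eigenvalues $\delta_n$ into the quadratic form $Q_g[\mathbf{c}]=\sum_i\int_\Gamma(\delta_i-\beta_i)c_i^2 g^2$ on $\{c_1+c_2+c_3=0\}$, writing $a_i=\delta_i-\beta_i$. A short computation with the standing relations $c=c_i\cosh T_i$, $\cos\alpha_i=-\tanh T_i$, and $\beta_i=-\cos\alpha_i/c$ gives, for the rotational mode $n=0$, $a_i=\tfrac{1}{c\cos\alpha_i}$; for $n=1$ the identity $\delta_1^{(i)}=\tanh T_i/c=\beta_i$ forces $a_i=0$ on every face; and for $n\ge 2$,
\begin{equation}
\nonumber
a_i=\frac{1}{c}\Big(n-\tanh T_i-\frac{1}{\cosh^2 T_i\,(n+\tanh T_i)}\Big)>0,
\end{equation}
since $n-\tanh T_i\ge n-1\ge 1$ while the subtracted term is $<1$.

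I would then read off $Ind_S$ and the nullity for each mode. For $n\ge 2$ all $a_i>0$, so $Q_g$ is positive definite: no index and no nullity. For $n=1$ the form vanishes identically, so $Ind_S=0$ while each of the two eigenfunctions $\cos\theta,\sin\theta$ contributes a two-dimensional kernel, giving four dimensions of nullity. For $n=0$ I would invoke the identity $\cos\alpha_1+\cos\alpha_2+\cos\alpha_3=0$ (valid because $\alpha_i=\alpha,\alpha\pm\tfrac{2\pi}{3}$), which yields $a_1a_2+a_1a_3+a_2a_3=0$; by the borderline case of Lemma~\ref{lem: sign of P(C1,C2,C3)} the form is semidefinite, and evaluating it at $\mathbf{c}=(0,1,-1)$ gives $a_2+a_3<0$ (both $\alpha_2,\alpha_3$ lie where cosine is negative), so $Q_g$ is negative semidefinite with $Ind_S=1$ and a one-dimensional kernel. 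Collecting terms, $\sum_{\dot\alpha}Ind_S=1$ and the total nullity is $1+4=5$, whence $Ind_J(M)=1+1+0=2$ with nullity $5$.

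The delicate points are the two borderline modes. The $n=1$ computation must reveal the exact coincidence $\delta_1^{(i)}=\beta_i$ on all three faces simultaneously; this reflects that the Jacobi fields $\tfrac{1}{\cosh(t+T_i)}$ arise from ambient horizontal translations and hence are genuine null directions of the junction, and it is the source of most of the nullity. The $n=0$ mode is the one that actually produces the index, and its sign is decided not by a strict inequality but by the exact degeneracy $\sum_i\cos\alpha_i=0$, so the argument must pass through the degenerate branch of Lemma~\ref{lem: sign of P(C1,C2,C3)} and then pin down the sign by a single test vector rather than by definiteness.
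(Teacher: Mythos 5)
Your proposal is correct and follows essentially the same route as the paper: the decomposition $Ind_J=\sum_i Ind^0_{J_i}+\sum Ind_S+Ind(\mathcal Z)$ with $\sum_i Ind^0_{J_i}=1$, $Ind(\mathcal Z)=0$, and a mode-by-mode Steklov analysis in which $n=0$ contributes the single Steklov index direction plus one null direction, $n=1$ is exactly degenerate ($\delta_i=\beta_i$) and contributes the four-dimensional nullity, and $n\ge 2$ is positive definite. Your use of the identity $\cos\alpha_1+\cos\alpha_2+\cos\alpha_3=0$ to get $a_1a_2+a_1a_3+a_2a_3=0$, and of the test vector $(0,1,-1)$ to fix the sign of the semidefinite form, are just cleaner shortcuts for the explicit trigonometric expansion and trace computation the paper carries out via Lemma~\ref{lem: sign of P(C1,C2,C3)}.
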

\begin{proof}
    According to Proposition \ref{prop: kernel(J) of cat-faces}, we are required to examine the following functions when restricted to the interface:
\begin{equation}
\begin{cases}
\nonumber
 w_0(t)\vert_{\Sigma_i} = \tanh (t+T_i),   \\
 w_1(t, \theta )\vert_{\Sigma_i} = (a_i \cos \theta  + b_i \sin \theta ) \frac{1}{\cosh (t+T_i)}   \\
 w_n (t, \theta )\vert_{\Sigma_i} = (a_i \cos (n\theta )  + b_i \sin (n\theta )) (n + \tanh (t+T_i))e^{-n(t+T_i)} \hspace{1cm}   ,n \geq 2
\end{cases}
\end{equation}
\medskip

$\bullet $    The function $w_0\vert_{t=0}=\tanh T_i$ is a Steklov eigenfunction with the eigenvalue $\delta_i = -\frac{1}{c_i \cosh^2 T_i \sinh T_i}$, restricted to $\partial \Sigma_i$, where $i=1,2,3$.
We verify that $\delta_2$ and $\delta_3$ are negative but $\delta_1 $ is positive, that is because  $\sinh T_i $ is positive for $i = 2, 3$ and is negative for $i = 1$. Notice that
$$
\delta_i - \beta_i = -\frac{1}{c_i \cosh^2 T_i \sinh T_i} + \frac{\cos \alpha_i}{c_i\cosh T_i} \\
= \frac{\sin \alpha_i \tan \alpha_i}{c } + \frac{\cos \alpha_i}{c} = \frac{1}{c} \frac{1}{\cos \alpha_i}
$$
where $\frac{\sqrt{3}}{2} <\cos \alpha_1 < 1,\ \frac{-\sqrt{3}}{2} < \cos \alpha_2 < \frac{-1}{2}$, and $\frac{-1}{2} < \cos \alpha_3 < 0$, and $0< \sin \alpha_1 < \frac{1}{2},\ \frac{1}{2} < \sin \alpha_2 < \frac{\sqrt{3}}{2}$, and $-1 < \sin \alpha_3 < \frac{-\sqrt{3}}{2}$.\\
Set $a_i = c (\delta_i  - \beta_i)$, for $i=1,2,3$, and check that $a_2 <0 $ and  $a_3 <0 $,  but  $a_1 > 0 $.
Let $u$ be some linear combination of $w_1$ on $M$ so that $u\vert_{\Sigma_i} =u_i$. Let $u_i\vert_{t=0}=C_i$, then $C_1+C_2+C_3=0$. Compute
$$
Q(u,u)=\underset{i=1}{\overset{3}{\sum}} \int_{\partial \Sigma_i}(\delta_i - \beta_i) C_i^2 = \frac{1}{c} \underset{i=1}{\overset{3}{\sum}} \int_{\partial \Sigma_i} a_i C_i^2,
$$
 To examine the sign of $Q(u,u)$ using Lemma \ref{lem: sign of P(C1,C2,C3)}, it is required to evaluate the signs of the expressions $d$ and $t$, where $d$ is defined as $ a_1a_2+a_1a_3+a_2a_3 = a_1(a_2 + a_3) + a_2a_3$ and $t$ is defined as $a_1+ a_2 + 2a_3$. It can be readily observed that $a_1+ a_2 + 2a_3$ is negative. Let $\alpha = \alpha_1$, then $a_1 = \frac{1}{\cos \alpha}$. Next, calculate the value of $a_2+a_3$:
 \begin{equation}
     \nonumber
     \begin{split}
         a_2 + a_3 = &\frac{1}{\cos (\alpha +\frac{2\pi}{3})} 
         + \frac{1}{\cos (\alpha -\frac{2\pi}{3})} \\
        = & \frac{-2}{\cos \alpha + \sqrt{3} \sin \alpha} + \frac{-2}{\cos \alpha - \sqrt{3} \sin \alpha} \\
        = & \frac{-4\cos \alpha }{\cos^2 \alpha -3\sin^2 \alpha} 
     \end{split}
 \end{equation}
And, the value of $a_2a_3$: 
\begin{equation}
    \nonumber
    \begin{split}
        a_2 a_3 = & \frac{-2}{\cos \alpha + \sqrt{3} \sin \alpha} \times \frac{-2}{\cos \alpha - \sqrt{3} \sin \alpha} \\
        = & \frac{4}{\cos^2 \alpha -3\sin^2 \alpha}
    \end{split}
\end{equation}
 Thus
 \begin{equation}
     \nonumber
     \begin{split}
         d &=a_1(a_2+a_3)+a_2a_3\\
         &= \frac{1}{\cos \alpha} \times \frac{-4\cos \alpha }{\cos^2 \alpha -3\sin^2 \alpha} + \frac{4}{\cos^2 \alpha -3\sin^2 \alpha} =0 
     \end{split}
 \end{equation}
 Therefore, as a consequence of Lemma \ref{lem: sign of P(C1,C2,C3)}, the sign of $Q(u,u)$ may be zero or negative. In particular, the index form 
 exhibits a negative or zero sign when applied to the space of linear combinations of $w_0$. Upon examining the space formed by linear combinations of $w_0$, we can infer that $w_0$ contributes a one-dimensional space to both the index space and the null space of the index form $Q$.
\medskip

$\bullet $  The function $w_1\vert_{t=0}= (a_i \cos \theta  + b_i \sin \theta ) \frac{1}{\cosh (T_i)}$ is a Steklov eigenfunction with the eigenvalue $\delta_i  = \frac{\sinh T_i}{c_i \cosh^2 T_i}$, restricted to $\partial \Sigma_i$. Notice that
$$
\delta_i - \beta_i = \frac{\sinh T_i}{c_i \cosh^2 T_i} - \frac{-\cos \alpha_i}{c_i\cosh T_i} = \frac{-\cos \alpha_i }{c} + \frac{\cos \alpha_i}{c} =0
$$
Consider a linear combination, $u$,  of $w_1$ on $M$, with $u\vert_{\Sigma_i \cup \partial \Sigma_i}=u_i$. The compatibility condition implies that $u_1\vert_{\Gamma} +u_2\vert_{\Gamma} +u_3\vert_{\Gamma} =0$. Compute
$$
Q(u,u)=\underset{i=1}{\overset{3}{\sum}} \int_{\partial \Sigma_i} (\delta_i-\beta_i)u_i^2 = 0
$$
The index form vanishes on linear combinations of $w_1$. Since the function $w_1$ spans a two-dimensional space of variations on each face and adheres to compatible condition when gives rise to a variation on all three faces, it implies that the space formed by linear combinations of $w_1$ contributes a four-dimensional space to the null space of $Q$.
\medskip

$\bullet $ Finally, for $n \geq 2 $,   the functions $$w_n\vert_{t=0}= (a_i \cos (n\theta )  + b_i \sin (n\theta )) (n + \tanh (T_i))e^{-n(T_i)} $$ are Steklov eigenfunctions restricted to  $\partial \Sigma_i$, for $i=1,2,3$, moreover, the associated eigenvalues are
$$
\delta_i  = -\frac{1 - ( n\cosh^2 T_i ) (n + \tanh T_i)}{c_i (n + \tanh T_i) \cosh^3 T_i}
$$

One may check that, 
$$\delta_i -\beta_i = \frac{1}{c} (\frac{n}{\cosh T_i} -  \frac{1}{ (n + \tanh T_i) \cosh^3 T_i} + \cos \alpha_i)>0 ,$$
for $i = 1, 2, 3$. When $\cos \alpha_i $ is a negative number, the first two terms give a number bigger that $-\cos \alpha_i$, making $\delta_i -\beta_i >0$ for $i=1,2,3$. Hence, the 
index form is positive on all linear combinations of $w_n$s. 
\medskip

Hence, $\underset{\dot{\alpha}}{{\sum}}Ind_S(g^{\dot{\alpha}})=1$,  $\underset{i=1}{\overset{3}{\sum}}Ind_{J_i}^0(\Sigma_i)=1$, and $Ind(\mathcal{Z})=0$, which can be directly verified in the kernel space of the Jacobi operators. Consequently, the index is two. \\
It can be observed that there exists a one-dimensional subspace of linear combinations of $w_0$ and a four-dimensional subspace of linear combinations of $w_1$ in the null space of the index form. Considering that these spaces are orthogonal with respect to the index form, lie in $L_{*}^2(M)$-space, and no other subspace of the $J$-null space is in the kernel of the index form, it would be concluded that the nullity of the index form is five.
\end{proof}

\subsection{Assessment of the Index Form for Members in the Subfamily $\{ Y_{\alpha} \}_{\alpha \in (\frac{\pi}{6}, \frac{\pi}{3}) } $ } 
Consider $M =(\Sigma_1, \Sigma_2, \Sigma_3 ; \Gamma )  \in \{ Y_{\alpha} \}_{\alpha \in (\frac{\pi}{6}, \frac{\pi}{3}) } $, where $\Sigma_1 , \Sigma_3 $ are non-graphical surfaces and $\Sigma_2 $ is a graphical surface over the horizontal plane. As observed, $\eta_1 , \eta_3 $ point outside the circle $\Gamma $ but $\eta_2 $ points inside $\Gamma $. 
Hence, $\beta_1 :=  \langle H_{\partial \Sigma_1} , \eta_1 \rangle $ and $\beta_3 := \langle H_{\partial \Sigma_3} , \eta_3 \rangle$ are negative numbers while $\beta_2 :=  \langle H_{\partial \Sigma_2} , \eta_2 \rangle$ is a positive number.\\
We assume $ \beta_1  < \beta_3  < \beta_2$, the second inequality is trivial, and the first can be verified from the fact that the equality holds only for the case of the pseudo $Y$-catenoid and $\beta_1$ increases on $(\frac{\pi}{6},\frac{\pi}{3})$.

\begin{thm}
    The Morse index of all members of the subfamily $\{ Y_{\alpha} \}_{\alpha \in (\frac{\pi}{6}, \frac{\pi}{3}) } $ is two, and the nullity is five.
\end{thm}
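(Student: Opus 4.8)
The plan is to follow the architecture of Theorem~\ref{thm: index (0,pi/6)} essentially verbatim, feeding the data through Corollary~\ref{coro: general index decom}. For $\alpha\in(\frac{\pi}{6},\frac{\pi}{3})$ the kernel $\mathcal{K}$ is trivial, so $\mathcal{Z}=\{0\}$ and $Ind(\mathcal{Z})=0$; by Remark~\ref{rem: nom. bnd.states of other members} the two non-graphical faces $\Sigma_1,\Sigma_3$ each carry one bounded state while the graphical face $\Sigma_2$ carries none, giving $\sum_{i=1}^{3}Ind_{J_i}^0(\Sigma_i)=2$. Everything then reduces to evaluating $\sum_{\dot\alpha}Ind_S(g^{\dot\alpha})$ together with the nullity, by testing $Q$ against the three families of Steklov eigenfunctions $w_0,w_1,w_n$ of Proposition~\ref{prop: kernel(J) of cat-faces}.

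For the $w_0$-mode I would use $\delta_i-\beta_i=\frac{1}{c\cos\alpha_i}$ and set $a_i=\frac{1}{\cos\alpha_i}$. With $\alpha_1=\alpha$, $\alpha_2=\alpha+\frac{2\pi}{3}$, $\alpha_3=\alpha-\frac{2\pi}{3}$ and $\alpha\in(\frac{\pi}{6},\frac{\pi}{3})$ one checks $a_1,a_3>0$ and $a_2<0$. The determinant $d=a_1a_2+a_1a_3+a_2a_3$ vanishes by exactly the algebraic identity used in the first subfamily, since the expressions for $a_2+a_3$ and $a_2a_3$ depend on $\alpha$ in the same way. The decisive difference is the trace $t=a_1+a_2+2a_3$: here it is \emph{positive}, so Lemma~\ref{lem: sign of P(C1,C2,C3)} forces $Q$ restricted to the $w_0$-span to be positive-or-zero. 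Thus $w_0$ contributes $0$ to the index and a one-dimensional kernel to the nullity, the mirror image of the first subfamily where $t<0$ produced an index contribution of $1$.

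The remaining modes are handled as before: for $w_1$ one has $\delta_i-\beta_i=0$ on every face, so $Q$ vanishes identically on the $w_1$-span, contributing $0$ to the index and a four-dimensional space to the nullity (the angular functions $\cos\theta,\sin\theta$ on each of the three faces give six parameters, cut down by the two scalar compatibility constraints obtained from projecting $\sum_i u_i\vert_\Gamma=0$ onto $\cos\theta$ and $\sin\theta$); for $n\ge 2$ one verifies $\delta_i-\beta_i>0$, making $Q$ positive definite. Collecting terms yields $\sum_{\dot\alpha}Ind_S(g^{\dot\alpha})=0$, hence $Ind_J(M)=2+0+0=2$, while the nullity is $1+4=5$.

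The main obstacle is establishing the sign $t>0$ uniformly over the open interval $(\frac{\pi}{6},\frac{\pi}{3})$. Because $d=0$ the form is degenerate and its signature is read off entirely from $\mathrm{sign}(t)$, so this single scalar inequality is precisely what flips the Steklov contribution from $1$ (first subfamily) down to $0$, thereby balancing the rise of the fixed-interface index from $1$ to $2$ and keeping the total index equal to $2$. I would prove $t>0$ by writing $a_2=\frac{-2}{\cos\alpha+\sqrt3\sin\alpha}$ and $a_3=\frac{-2}{\cos\alpha-\sqrt3\sin\alpha}$, recognizing the denominators as $2\cos(\alpha\mp\frac{\pi}{3})$, reading off the sign of each term from the location of $\alpha\pm\frac{\pi}{3}$ (noting that the denominator of $a_3$ vanishes exactly at the excluded point $\alpha=\frac{\pi}{6}$, where $\mathcal{K}$ degenerates), and controlling the endpoints, where $t\to+\infty$ as $\alpha\to\frac{\pi}{6}^+$ and $t\to 5$ as $\alpha\to\frac{\pi}{3}^-$.
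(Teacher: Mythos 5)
Your proposal is correct and follows essentially the same route as the paper: the same decomposition via Corollary~\ref{coro: general index decom}, the same identification $\sum_i Ind_{J_i}^0(\Sigma_i)=2$ from the two non-graphical faces, and the same mode-by-mode analysis of $w_0$ (with $d=0$ and $t=a_1+a_2+2a_3>0$ flipping the Steklov contribution to zero via Lemma~\ref{lem: sign of P(C1,C2,C3)}), $w_1$, and $w_n$ for $n\ge 2$. Your explicit justification of $t>0$ via the factorizations $\cos\alpha\pm\sqrt3\sin\alpha=2\cos(\alpha\mp\frac{\pi}{3})$ and the endpoint values is a welcome elaboration of the paper's bare assertion that the trace is positive, but it is a refinement of the same argument, not a different one.
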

\begin{proof}
    This subfamily shares the same $J$-eigenfunctions with all other members in the $Y$-noid family. Hence, we are still required to check on the following functions:
    \medskip

$\bullet $    The function $w_0\vert_{t=0}=\tanh T_i$ is a Steklov eigenfunction with the eigenvalue $\delta_i = -\frac{1}{c_i \cosh^2 T_i \sinh T_i}$, restricted to $\partial \Sigma_i$, for $i=1,2,3$. We verify that $\delta_1$ and $\delta_3$ are positive but $\delta_2 $ is negative, that is because  $\sinh T_i $ is negative for $i = 1, 3$ and is positive for $i = 2$. \\
Notice that
$$
\delta_i - \beta_i = \frac{1}{c\cos \alpha_i}$$
where $\frac{1}{2} <\cos \alpha_1 < \frac{\sqrt{3}}{2},\ -1 < \cos \alpha_2 < \frac{-\sqrt{3}}{2}$, and $0 < \cos \alpha_3 < \frac{1}{2}$, and $\frac{1}{2} < \sin \alpha_1 < \frac{\sqrt{3}}{2},\ 0 < \sin \alpha_2 < \frac{1}{2}$, and $-1 < \sin \alpha_3 < \frac{-\sqrt{3}}{2}$.\\
It implies that $\delta_1  - \beta_1 >0 $ and  $\delta_3  - \beta_3 >0 $  but  $\delta_2  - \beta_2 < 0 $. Set $a_i = c(\delta_i - \beta_i)$, for $i=1,2,3$, and note that $a_1, a_3 >0$, but $a_2<0$.
Let $u$ be some linear combination of $w_0$ on $M$ so that $u\vert_{\Sigma_i} =u_i$. Let $u_i\vert_{t=0}=C_i$, then $C_1+C_2+C_3=0$. Compute
$$
Q(u,u)=\underset{i=1}{\overset{3}{\sum}} \int_{\partial \Sigma_i} (\delta_i -\beta_i) C_i^2 = \frac{1}{c} \underset{i=1}{\overset{3}{\sum}} \int_{\partial \Sigma_i} a_i C_i^2,
$$

In order to analyze the sign of $Q(u,u)$ with the assistance of Lemma \ref{lem: sign of P(C1,C2,C3)}, it is imperative to assess the signs of the expressions $a_1+ a_2 + 2a_3$ and $d$, where $d$ is given by $a_1a_2+a_1a_3+a_2a_3$. It is easy to find that $a_1+ a_2 + 2a_3$ is positive. Performing a calculation analogous to that covered in Theorem \ref{thm: index (0,pi/6)}, reveals that $d$ equals zero. \\ 
 Hence, as a result of Lemma \ref{lem: sign of P(C1,C2,C3)}, the sign of $Q(u,u)$ is either zero or positive. More specifically,
 the index form displays either a zero or positive sign when employed on the space comprising linear combinations of $w_0$. Upon examining the space generated by linear combinations of $w_0$, it can be deduced that $w_0$ contributes a one-dimensional space to the null space of the index form.
\medskip

$\bullet $  The function $w_1\vert_{t=0}= (a_i \cos \theta  + b_i \sin \theta ) \frac{1}{\cosh (T_i)}$ is a Steklov eigenfunction with the eigenvalue $\delta_i  = \frac{\sinh T_i}{c_i \cosh^2 T_i}$, restricted to $\partial \Sigma_i$. Notice that
$$
\delta_i - \beta_i =  \frac{-\cos \alpha_i }{c} + \frac{\cos \alpha_i}{c} =0
$$
Examine a linear combination, $u$,  of $w_1$ on $M$, with $u\vert_{\Sigma_i \cup \partial \Sigma_i}=u_i$. By the compatibility condition, $u_1\vert_{\Gamma} +u_2\vert_{\Gamma} +u_3\vert_{\Gamma} =0$. Compute
$$
Q(u,u)=\underset{i=1}{\overset{3}{\sum}} \int_{\partial \Sigma_i} (\delta_i-\beta_i)u_i^2 = 0
$$
The index form vanishes on linear combinations of $w_1$. Since the function $w_1$ spans a two-dimensional space of variation when restricted to a face and adheres to compatible condition when gives rise to a variation on all three faces, it implies that the space formed by linear combinations of $w_1$ contributes a four-dimensional space to the null space of $Q$. \\

$\bullet $ Finally, for $n \geq 2 $,   the Steklov eigenfunctions are $$w_n \vert_{t=0}= (a_i \cos (n\theta )  + b_i \sin (n\theta )) (n + \tanh (T_i))e^{-n(T_i)} $$ and the associated eigenvalues are
$$
\delta_i  = -\frac{1 - ( n\cosh^2 T_i ) (n + \tanh T_i)}{c_i (n + \tanh T_i) \cosh^3 T_i}
$$

Similar to the first subfamily, we still have $\delta_i - \beta_i > 0$ for $i = 1, 2, 3$, establishing that the index form is positive on all linear combinations of $w_n$s. 
\medskip

In conclusion, $\underset{\dot{\alpha}}{{\sum}}Ind_S(g^{\dot{\alpha}})=0$,  $\underset{i=1}{\overset{3}{\sum}}Ind_{J_i}^0(\Sigma_i)=2$, and $Ind(\mathcal{Z})=0$. As a result, the index is two. \\
As observed, there exist a one-dimensional subspace formed by linear combinations of $w_0$ along with a four-dimensional space arising from linear combinations of $w_1$ within the null space of the index form. Furthermore, all these spaces are orthogonal with respect to the index form, they are in $L_{*}^2(M)$-space, and there exists no other subspace of the $J$-null space which lies in the kernel of the index form. Consequently, we deduce that the nullity of the index form is five.
\end{proof}

\subsection{Index Form on the Surface $Y_{\frac{\pi}{6}}$}
\begin{thm}
    The Morse index of the surface $Y_{\frac{\pi}{6}}$ is two, the nullity is five.
\end{thm}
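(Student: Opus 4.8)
The distinguishing feature of $Y_{\frac{\pi}{6}}$ is that it is the unique member of the family for which the Dirichlet kernel $\mathcal{K}$ is nontrivial, so Corollary \ref{coro: general index decom} must be applied in its full form, including the extra term $Ind(\mathcal{Z})$. The plan is first to identify the faces through $\cos\alpha_i = -\tanh T_i$: taking $\alpha_1 = \frac{\pi}{6}$, $\alpha_2 = \frac{5\pi}{6}$, $\alpha_3 = -\frac{\pi}{2}$ gives $T_1 < 0$ (a non-graphical face), $T_2 > 0$ (a graphical face), and $T_3 = 0$. By Proposition \ref{prop: kernel(J) of cat-faces}, when $T_3 = 0$ the mode $w_0 = \tanh(t+T_3) = \tanh t$ vanishes on $\Gamma$, so it is no longer a Steklov eigenfunction but an element $k$ of $\mathcal{K}_3$; thus $\mathcal{K} = \mathcal{K}_3$ is one-dimensional. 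I would then record that $\sum_{i=1}^{3} Ind_{J_i}^0(\Sigma_i) = 1$: the non-graphical face $\Sigma_1$ carries the single bound state, the graphical face $\Sigma_2$ is stable, and on $\Sigma_3$ the would-be bound state has degenerated into the zero mode $k$ and so contributes nothing to the Dirichlet index.

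Next I would run the Steklov analysis on the remaining modes exactly as in Theorem \ref{thm: index (0,pi/6)}. The $w_1$ mode again satisfies $\delta_i - \beta_i = \frac{-\cos\alpha_i}{c} + \frac{\cos\alpha_i}{c} = 0$ on every face, so $Q$ vanishes on all compatible combinations of $w_1$; since $w_1$ spans a two-dimensional space on each face and the compatibility condition $C_1 + C_2 + C_3 = 0$ is imposed on each of the $\cos\theta$ and $\sin\theta$ components, this yields a four-dimensional null subspace and no index. For $n \geq 2$ one checks $\delta_i - \beta_i > 0$, so these modes are positive definite and contribute neither index nor nullity. Hence $\sum_{\dot\alpha} Ind_S(g^{\dot\alpha}) = 0$, and the only remaining work is the space $\mathcal{Z}$ built from the $w_0$ data and the kernel element $k$.

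The heart of the argument is the computation of $Ind(\mathcal{Z})$, using the explicit expression for $Q$ on $\mathcal{Z}$ obtained in the general index decomposition preceding Corollary \ref{coro: general index decom}. A general element of $\mathcal{Z}$ has the form $(c_1 E_1(\phi), c_2 E_2(\phi), \alpha k + c_3 E_3(\phi))$ with $c_1 + c_2 + c_3 = 0$ and $\phi$ spanning $\mathcal{N}^+ = \mathrm{Span}\{\partial_{\nu_3}k\}$, which here is constant on $\Gamma$. Writing $J E_3(\phi) = \beta k$ and using $k|_\Gamma = 0$ together with the orthogonality condition $\int_{\Sigma_3} E_3(\phi)k = 0$, one obtains
\begin{equation}
\nonumber
Q(\psi) = -2\alpha c_3\beta \int_{\Sigma_3} k^2 + \int_{\Gamma}\big[(\delta_1 - \beta_1)c_1^2 + (\delta_2 - \beta_2)c_2^2 + (\delta_3 - \beta_3)c_3^2\big]\phi^2 .
\end{equation}
The $w_0$ Steklov values on the two genuine faces give $\delta_1 - \beta_1 = \frac{1}{c\cos\alpha_1} = \frac{2}{c\sqrt{3}} > 0$ and $\delta_2 - \beta_2 = \frac{1}{c\cos\alpha_2} = -\frac{2}{c\sqrt{3}} < 0$, so $(\delta_1 - \beta_1) + (\delta_2 - \beta_2) = 0$. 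I would then examine the symmetric $3\times 3$ matrix in the variables $(\alpha, c_1, c_2)$ (eliminating $c_3 = -(c_1+c_2)$): its $\alpha$-diagonal entry is zero while the coupling $\beta\int_{\Sigma_3}k^2$ to $c_3$ is nonzero, forcing indefiniteness, whereas on the complementary direction $c_3 = 0$ (that is $c_1 = -c_2$) the $\alpha$-coupling drops out and $Q$ reduces to $(\delta_1 - \beta_1 + \delta_2 - \beta_2)c_1^2\phi^2 = 0$. A short determinant computation gives $\det = 0$ with negative leading $2\times 2$ minor, independently of the finite value $\delta_3 - \beta_3$, so the signature of $Q|_{\mathcal{Z}}$ is $(+,-,0)$; hence $Ind(\mathcal{Z}) = 1$ with a one-dimensional radical.

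Assembling via Corollary \ref{coro: general index decom}, the index is $\sum_i Ind_{J_i}^0(\Sigma_i) + \sum_{\dot\alpha} Ind_S(g^{\dot\alpha}) + Ind(\mathcal{Z}) = 1 + 0 + 1 = 2$, and the nullity is the four-dimensional $w_1$-kernel together with the one-dimensional radical of $Q|_{\mathcal{Z}}$, for a total of five. I expect the main obstacle to be the $\mathcal{Z}$-computation: one must verify $JE_3(\phi) \in \mathcal{K}_3$, keep careful track of which $w_0$-data has degenerated into the kernel versus which remains a genuine Steklov eigenfunction, and confirm that the borderline cancellation $(\delta_1 - \beta_1) + (\delta_2 - \beta_2) = 0$ interacts with the kernel cross-term to produce \emph{exactly} one negative and one null direction rather than two null directions. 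This bookkeeping is precisely what makes the index stay equal to two as $\alpha$ crosses $\frac{\pi}{6}$, with the unit of index transferring from the $\sum Ind_{J_i}^0$ term to the $Ind(\mathcal{Z})$ term.
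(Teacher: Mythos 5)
Your proposal is correct and follows essentially the same route as the paper: the decomposition of Corollary \ref{coro: general index decom} with $\mathcal{K}=\mathcal{K}_3$ one-dimensional, $\sum_i Ind_{J_i}^0(\Sigma_i)=1$, the vanishing of $Q$ on the $w_1$ modes (four-dimensional null space), positivity for $n\geq 2$, and the cancellation $(\delta_1-\beta_1)+(\delta_2-\beta_2)=0$ feeding into $Ind(\mathcal{Z})=1$. Your explicit signature computation for $Q|_{\mathcal{Z}}$ (rank $2$, signature $(+,-,0)$ via the vanishing determinant and negative $2\times 2$ minor) is in fact more careful than the paper's treatment, which only exhibits one negative direction by taking $\alpha$ large and asserts the remaining one-dimensional null contribution.
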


\begin{proof}
    Let $M=(\Sigma_1 , \Sigma_2 , \Sigma_3 ; \Gamma)$ represents the surface $Y_{\frac{\pi}{6}}$. Notice that $\Sigma_1$ is a non-graphical surface, while
    $\Sigma_2, \Sigma_3$ are graphical surfaces. Specifically, the face $\Sigma_1$ is the only face that contains one and only one bounded state of the Jacobi operator. In particular, $\Sigma_3$ intersects the horizontal plane, $\{ x_3 =0\}$, orthogonally. As a result,
     $\beta_1 <0,\ \beta_2 > 0$ and $\beta_3 =0$. Furthermore, the list of Steklov eigenfunctions is as follows:
     \medskip
     
    $\bullet$ The function $w_0\vert_{t=0}=\tanh T_i$ is a Steklov eigenfunction with the eigenvalue $\delta_i = -\frac{1}{c_i\cosh^2 T_i \sinh T_i}$, restricted to $\partial \Sigma_i$, for $i=1,2$. However, $w_0(t) \in \mathcal{K}_3$, meaning that $\mathcal{Z}\neq \{ 0\}$. We verify that
    $$(\delta_1 -\beta_1) +(\delta_2 -\beta_2) =0,
    $$ 
    specifically,
    $$
    \delta_i -\beta_i = \frac{1}{c\cos \alpha_i} = \pm (\frac{2\sqrt{3}}{3c})
    $$
    for $i=1,2$, respectively.\\ 
    We may choose $\psi \in \mathcal{Z}=\{ (0,0,\alpha k)+(c_1E_1(\phi ),c_2E_2(\phi ),c_3E_3(\phi ))\ ;\ k\in \mathcal{K}\ \&\  \phi \in \mathcal{N}^+ \}$, for $\mathcal{K}=Span\{ w_0\}$ and
    $\mathcal{N}^+=Span\{ \partial_{\eta_3} k\ ;\  k\in \mathcal{K} \}$, so that $c_3 \neq 0$. As discussed at the end of Section 3, we may change $\alpha$ to be large enough to make $Q(\psi)$ negative. This adds a one-dimensional space to the index space of the index form.
    
    It is evident that, the space of linear combinations of $w_0$ contributes a one-dimensional space to the null space of the index form. 
    \medskip
    
$\bullet$  The function  $w_1\vert_{t=0}= (a_i \cos \theta  + b_i \sin \theta ) \frac{1}{\cosh (T_i)}$ is a Steklov eigenfunction with the eigenvalue $\delta_i  = \frac{\sinh T_i}{c_i \cosh^2 T_i} = \frac{-\cos \alpha_i }{c}$, restricted to $\partial \Sigma_i$, for $i=1,2,3$. It is evident that,  $\delta_1 <0$, $\delta_2 >0$, and $\delta_3 =0$. We check that
$$
\delta_i - \beta_i =  \frac{-\cos \alpha_i }{c} + \frac{\cos \alpha_i}{c} =0,
$$
for $i=1,2,3$.
Consider a linear combination, $u$,  of $w_1$ on $M$, with $u\vert_{\Sigma_i \cup \partial \Sigma_i}=u_i$. As a result of the compatibility condition, $u_1\vert_{\Gamma} +u_2\vert_{\Gamma} +u_3\vert_{\Gamma} =0$. Compute
$$
Q(u,u)=\underset{i=1}{\overset{3}{\sum}} \int_{\partial \Sigma_i} (\delta_i-\beta_i)u_i^2 = 0
$$
    The index form vanishes on linear combinations of $w_1$. Since the function $w_1$ spans a two-dimensional space of variation when restricted to a face and adheres to compatible condition when gives rise to a variation on all three faces, it implies that the space formed by linear combinations of $w_1$ contributes a four-dimensional space to the null space of $Q$. 
\medskip

$\bullet $ Finally, for $n \geq 2 $,   the Steklov eigenfunctions are $$w_n \vert_{t=0}= (a_i \cos (n\theta )  + b_i \sin (n\theta )) (n + \tanh (T_i))e^{-n(T_i)} $$ and the associated eigenvalues are
$$
\delta_i  = -\frac{1 - ( n\cosh^2 T_i ) (n + \tanh T_i)}{c_i (n + \tanh T_i) \cosh^3 T_i}
$$

Again, $\delta_i - \beta_i >0 $ for $i = 1, 2, 3$, establishing that the index form is positive on all linear combinations of $w_n$s. 
\medskip

One may check that, similar to the other examples, there is no more function in the kernel of the Jacobi operator making the index form to be negative or zero. 

In summary,  $\underset{\dot{\alpha }}{\sum}Ind_S(g^{\dot{\alpha}})=0$, $\underset{i=1}{\overset{3}{\sum}}Ind_{J_i}^0(\Sigma_i)=1$ and $Ind(\mathcal{Z})=1$. Therefore, the index is two.

In particular, the null space of the index form consists of a one-dimensional subspace of the linear combinations of $w_0$ and a four-dimensional subspace of the linear combinations of $w_1$, and so the nullity is five. 
\end{proof}

\bibliographystyle{amsalpha}

\providecommand{\bysame}{\leavevmode\hbox to3em{\hrulefill}\thinspace}
\providecommand{\MR}{\relax\ifhmode\unskip\space\fi MR }
\providecommand{\MRhref}[2]{%
  \href{http://www.ams.org/mathscinet-getitem?mr=#1}{#2}
}
\providecommand{\href}[2]{#2}

\end{document}